\newcommand*{\isoarrow}[1]{\arrow[#1,"\rotatebox{90}{\(\sim\)}"
]}
\theoremstyle{plain}
\newtheorem{theorem}{Theorem}[section]
\newtheorem{proposition}[theorem]{Proposition}
\newtheorem{corollary}[theorem]{Corollary}
\newtheorem{lemma}[theorem]{Lemma}
\newtheorem{theoremA}{Theorem}
\newtheorem{corollaryA}[theoremA]{Corollary}
\theoremstyle{definition}
\newtheorem{definition}[theorem]{Definition}
\theoremstyle{remark}
\newtheorem{remark}[theorem]{Remark}
\DeclareMathOperator{\Div}{Div}
\DeclareMathOperator{\Spec}{Spec} 
\DeclareMathOperator{\pr}{pr} 
\DeclareMathOperator{\princ}{div} 
\DeclareMathOperator{\Et}{\acute{E}t}
\DeclareMathOperator{\Vect}{Vect} 
\DeclareMathOperator{\Par}{Par} 
\DeclareMathOperator{\Con}{Con}
\DeclareMathOperator{\Sym}{Sym}
\DeclareMathOperator{\SPEC}{\mathbf{Spec}} 
\DeclareMathOperator{\res}{res} 
\DeclareMathOperator{\HOM}{\mathbf{Hom}} 
\DeclareMathOperator{\id}{id} 
\DeclareMathOperator{\Hom}{Hom} 
\DeclareMathOperator{\Sesh}{Sesh} 
\DeclareMathOperator{\rk}{rk} 
\DeclareMathOperator{\coker}{coker} 
\DeclareMathOperator{\im}{im} 
\DeclareMathOperator{\Int}{Int} 
\DeclareMathOperator{\Rep}{Rep} 
\DeclareMathOperator{\Sec}{Sec}
\newcommand{\preparabolic}{parabolic~}
\newcommand{\parabolic}{strongly parabolic\xspace}
\begin{document}

\title{Parabolic connections and stack of roots}
\author{Niels Borne \and  Amine Laaroussi}
\maketitle

\section{Introduction}%
\label{sec:introduction}

\subsection{The notion of parabolic connection}%
\label{sub:the_notion_of_parabolic_connection}

A parabolic connection on an algebraic variety $X$ endowed with a divisor $D$ is, roughly, a vector bundle on $X$ equipped with two compatible structures: a parabolic structure in the sense of Mehta-Seshadri, and a logarithmic connection. Parabolic connections and parabolic Higgs bundles have been introduced by Carlos Simpson in order to establish a version of what is now called Simpson's correspondence over a non-compact curve (\cite{Simpson:noncompact}). Simpson's interpretation of parabolic bundles as filtered sheaves led to the generalization of the definition of parabolic Higgs bundles to higher dimensional varieties (\cite{yoko:compactification}).

Since then, parabolic connections, and their moduli spaces, have been an active subject of research, mainly over a curve\footnote{We apologize not to be able to cite the numerous contributions to this nice subject.}. However, parabolic connections also made quite recently a notable apparition on higher varieties in the work of R.Donagi and T.Pantev on Geometric Langlands Conjecture using Simpson's non abelian Hodge theory (\cite{donagi2019parabolic}).

\subsection{First stacky interpretations}%
\label{sub:first_stacky_interpretations}

Meanwhile, another interpretation of parabolic bundles as orbifold bundles came up, first on global quotients (\cite{bis:parabolic-bundles-orbifold-bundles}) then on natural algebraic stacks associated to $(X,D)$, the stacks of roots (\cite{bor:corr,bor:rep,BV:par}). More precisely, there is a Fourier like correspondence between parabolic bundles and ordinary vector bundles on the stack of roots.

This raises the question of understanding parabolic connections through this correspondence. This question was answered in dimension $1$ by Biswas-Majumder-Wong (\cite{BMW:root}) and Loray-Saito-Simpson (\cite[]{LSS:foliations}). Both teams came to the same conclusion: connections on the stack of roots that are holomorphic correspond precisely to parabolic connections such that the weights of the parabolic structure are the spectra of the residues of the connection. Also notable was Biswas-Majumder-Wong's similar description of holomorphic Higgs bundles on the stack roots on a variety of arbitrary dimension (\cite{BMW:higgs}). 
\subsection{Our results}%
\label{sub:our_results}

Our main goal is to generalize the results above in any dimension. Our starting data is a smooth variety $X$ over a field $k$ endowed with a smooth divisor $D$\footnote{In the main text, we work more generally with a strict normal crossings divisor.}. Let $r\in \mathbb N^*$.  To this data, we can associate on one hand the corresponding stack of roots $\pi:\mathfrak X \to X$, this is the minimal stack such that $\pi^* \left(\frac{1}{r}D\right)$ is integral. On the other hand, we can define \preparabolic connections, that is decreasing families $(\mathcal E_\alpha,\nabla_\alpha)_{\alpha\in \frac{1}{r}\mathbb Z }$ of logarithmic connections such that $\mathcal E_{\alpha+1}\simeq \mathcal E_\alpha(-D)$. Consistently with the current terminology on parabolic Higgs bundles, we say that a logarithmic connection is \emph{\parabolic} if moreover the residue of $\nabla_0$ is semi-simple with eigenvalues the weights of the underlying parabolic bundle. Our first main result is:

\begin{theoremA}[Theorem \ref{thm:car_hol_stacky}]
\label{thm:car_hol_stackyA}

A logarithmic connection $(\mathcal F,\nabla)$ on $\mathfrak X$ is holomorphic if and only if the corresponding \preparabolic connection $(\mathcal E_\alpha,\nabla_\alpha)_{\alpha\in \frac{1}{r}\mathbb Z }$ is \parabolic.
\end{theoremA}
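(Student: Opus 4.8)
The plan is to translate both sides into statements about the residue of the connection along the reduced root divisor $\mathcal D\subset\mathfrak X$, the Cartier divisor characterised by $r\mathcal D=\pi^*D$, and then to compare that residue with the residues $\res_D(\nabla_\alpha)$ of the associated \preparabolic connection.

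First I would use the two structural facts about root-stack differentials: the pullback isomorphism $\Omega^1_{\mathfrak X}(\log\mathcal D)\simeq\pi^*\Omega^1_X(\log D)$ and the residue exact sequence
$$0\longrightarrow\Omega^1_{\mathfrak X}\longrightarrow\Omega^1_{\mathfrak X}(\log\mathcal D)\xrightarrow{\ \res\ }\mathcal O_{\mathcal D}\longrightarrow 0 .$$
Tensoring by $\mathcal F$ and post-composing $\nabla$ with the last arrow yields the residue $\res_{\mathcal D}(\nabla)\in\operatorname{End}(\mathcal F|_{\mathcal D})$, which is $\mathcal O_{\mathcal D}$-linear because the Leibniz term vanishes on $\mathcal D$. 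The first step is the standard observation (Deligne) that $\nabla$ lands in $\mathcal F\otimes\Omega^1_{\mathfrak X}$ — i.e.\ is holomorphic — exactly when $\res_{\mathcal D}(\nabla)=0$. This reduces the theorem to a comparison of residues.

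Both conditions being local along the divisor, I would then work in an étale neighbourhood where $\mathfrak X=[\Spec(A[s]/(s^r-t))/\mu_r]$ with $D=\{t=0\}$, $\mathcal D=\{s=0\}$ and $\mathcal D\simeq D\times B\mu_r$. A bundle $\mathcal F$ becomes a $\mu_r$-graded module $M=\bigoplus_j M_j$, multiplication by $s$ raising the weight by one; Borne's correspondence identifies the weight spaces of $\mathcal F|_{\mathcal D}=\bigoplus_{j}\mathcal F_j$ with the graded pieces $\operatorname{gr}^j$ of the parabolic filtration of $\mathcal E_0$ restricted to $D$. Because $t=s^r$ gives $\pi^*\tfrac{dt}{t}=r\,\tfrac{ds}{s}$, the log generator $\tfrac{ds}{s}$ is $\mu_r$-invariant, so $\res_{\mathcal D}(\nabla)$ preserves the grading and restricts to an endomorphism of each $\mathcal F_j$. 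The technical heart is the residue comparison: writing a graded lift of a class in $\operatorname{gr}^j$ as $s^jm'$ with $m'\in M_{-j}$ and using $d(s^j)=j\,s^j\,\tfrac{ds}{s}$ together with the factor $r$ coming from $t=s^r$, a Leibniz computation gives, on $\operatorname{gr}^j$,
$$\res_D(\nabla_0)=\tfrac1r\,\res_{\mathcal D}(\nabla)\big|_{\mathcal F_j}+\tfrac{j}{r}\,\id ,$$
up to the sign fixed by the orientation of the correspondence.

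The equivalence then falls out. Vanishing of $\res_{\mathcal D}(\nabla)$ holds if and only if $\res_D(\nabla_0)$ acts on each $\operatorname{gr}^j$ as the scalar $j/r$, i.e.\ on the associated graded of the parabolic filtration by the corresponding weight. Since $\res_D(\nabla_0)$ preserves the filtration (which the same Leibniz computation shows) and the candidate eigenvalues $0,\tfrac1r,\dots,\tfrac{r-1}{r}$ are pairwise distinct, the product $\prod_j(\,\cdot-\tfrac{j}{r})$ annihilates it; as its roots are distinct the operator is automatically semisimple with eigenvalues exactly the weights — which is the definition of \parabolic. The converse is the same computation read backwards. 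I expect the genuine obstacle to be this residue comparison: one must set up the orientation of Borne's correspondence and track the $\mu_r$-equivariance and the factor $r$ precisely enough that the shift lands on the weight rather than a multiple of it; by contrast the semisimplicity, which might look like the crux, is free once the weights are seen to be distinct. The remaining work is to globalise from the local model and, for a strict normal crossings divisor $D=\sum_iD_i$, to run the same bookkeeping componentwise for the product $\prod_i\mu_{r_i}$, keeping the weight normalisations consistent across branches.
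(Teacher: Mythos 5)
Your proposal is correct, but it takes a genuinely different route from the paper's. Your technical heart --- the identity on the graded piece $\operatorname{gr}^j$ of the weight filtration, $\res_D(\nabla_0)=\tfrac1r\,\res_{\mathfrak D}(\nabla)\big|_{\operatorname{gr}^j}+\tfrac{j}{r}\,\id$, proved by a Leibniz computation with $\mu_r$-graded modules in the chart $[\Spec(A[s]/(s^r-t))/\mu_r]$ --- is exactly what the paper establishes globally and chart-free, split into two ingredients: the pushforward--residue compatibility $c_h\circ\res_{D_h}(\pi_*\nabla)=\tfrac1{r_h}\,p_*\res_{\mathfrak D_h}(\nabla)\circ c_h$ (Lemma \ref{lem:morphism_restrictions_compatible_residues}), applied to all twists $\nabla(-\mathbf l\,\mathfrak D)$, together with the twist formula $\res_{\mathfrak D}(\nabla(-l\mathfrak D))=\res_{\mathfrak D}(\nabla)\otimes\id+l\,\id$. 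For the `if' direction, where you decompose $\mathcal F|_{\mathfrak D}$ into its $\mu_r$-isotypic pieces in the local model and kill them one by one, the paper does the same intrinsically: it identifies $\mathfrak D\simeq\sqrt[r]{\mathcal N_D/D}$ (Lemma \ref{lem:natural_gerbe_of_roots}) and invokes the representation theory of $\mu_r$-gerbes (Lemma \ref{lem:representation_theory_mu_r_gerbe}, Corollary \ref{cor:representation_theory_mu_r_gerbe}), by which a morphism on the gerbe vanishes iff all its pushforwards twisted by $\mathcal N^{\vee\otimes l}$, $0\le l<r$, vanish. Your approach buys concreteness --- one computation yields both directions and makes the shift $j/r$ and the factor $1/r$ transparent --- at the cost of the glueing you defer: you should say explicitly that both conditions are étale-local on $X$ and that the formation of $\widehat{\mathcal F}_\cdot=\pi_*(\mathcal F(-\mathbf l\,\mathfrak D))$ commutes with étale base change, which makes the localisation legitimate; likewise your componentwise remark for $\prod_i\mu_{r_i}$ in the sncd case is left as bookkeeping, where the paper instead reduces carefully to a single divisor by restricting to the complement of the other branches and using scheme-theoretic density. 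One minor point: the closing semisimplicity argument, while correct, is not needed here --- the graded-scalar condition you verify is literally Definition \ref{def:parabolic_connection}, and semisimplicity of $\res_D(\nabla_0)$ is a consequence established separately (Proposition \ref{prop:parabolic-connection-semisimple-residues}), not part of what the theorem asks.
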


From Theorem \ref{thm:car_hol_stackyA} and the usual stacky-parabolic equivalence for vector bundles we deduce:

\begin{theoremA}[Theorem \ref{thm:correspondence}]
\label{thm:correspondenceA}
There is a natural tensor equivalence of categories between holomorphic connections on $\mathfrak{X}$ and \parabolic connections with weights in $\frac{1}{r} \mathbb{Z}$. 
\end{theoremA}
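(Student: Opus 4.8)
The plan is to deduce Theorem~\ref{thm:correspondenceA} by combining the pointwise characterization of Theorem~\ref{thm:car_hol_stackyA} with the known tensor equivalence between vector bundles on the stack of roots $\mathfrak X$ and parabolic bundles on $(X,D)$ with weights in $\frac{1}{r}\mathbb Z$. Concretely, I would start from the functor that sends a logarithmic connection $(\mathcal F,\nabla)$ on $\mathfrak X$ to its associated \preparabolic connection $(\mathcal E_\alpha,\nabla_\alpha)_{\alpha\in\frac{1}{r}\mathbb Z}$, obtained by pushing forward the twists $\pi_*\bigl(\mathcal F\otimes \pi^*\mathcal O(\lfloor \alpha\rfloor D)\bigr)$ (or the appropriate twist by powers of the tautological root line bundle) and transporting $\nabla$ along $\pi$. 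The first step is to recall from the underlying vector-bundle correspondence that this assignment is an equivalence of categories at the level of the bundles, so the content I must add is that it is compatible with the connection data and that it is monoidal.

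First I would verify that this functor is well defined and fully faithful as a functor of connections, not merely of bundles. A morphism of logarithmic connections on $\mathfrak X$ is a morphism of bundles commuting with $\nabla$; since $\pi$ is a coarse-moduli-type map which is an isomorphism away from $D$ and the pushforward is fully faithful on bundles, a morphism upstairs induces a compatible family of horizontal morphisms downstairs, and conversely. Here one uses that $\nabla$ and the $\nabla_\alpha$ agree generically on $X\setminus D$, so horizontality is detected on a dense open and extends across $D$ by the logarithmic pole condition. The key point is that flatness/horizontality is a closed condition that transports cleanly under the $\pi_*$ equivalence because $\pi$ is flat and tame.

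Next, to pin down the essential image, I would invoke Theorem~\ref{thm:car_hol_stackyA}: a logarithmic connection on $\mathfrak X$ is \emph{holomorphic} exactly when its associated \preparabolic connection is \parabolic. Thus restricting the bundle-level equivalence to holomorphic connections on one side lands precisely in \parabolic connections on the other, and the functor is essentially surjective onto that subcategory. This is the step where essentially all the mathematical content is already supplied by Theorem~A, so the argument here is a formal consequence once the functor of connections is in place.

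The remaining and genuinely nontrivial point — the step I expect to be the main obstacle — is the tensor (monoidal) compatibility. I would check that under $\pi_*$ the tensor product of two connections on $\mathfrak X$, equipped with the Leibniz connection $\nabla_1\otimes\id + \id\otimes\nabla_2$, corresponds to the correct tensor product of \parabolic connections, where the parabolic tensor product involves the convolution of weight filtrations rather than a naive termwise tensor. The subtlety is that $\pi_*(\mathcal F\otimes\mathcal G)$ is not simply $\pi_*\mathcal F\otimes\pi_*\mathcal G$; the monoidal structure downstairs is the parabolic tensor product, and one must confirm that the residues add correctly so that semisimplicity and the eigenvalue-equals-weight condition are preserved — i.e.\ that holomorphicity is stable under tensor product on $\mathfrak X$ and matches the \parabolic tensor product after pushforward. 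I would handle this by reducing to the local model near a component of $D$, where $\mathfrak X$ is an explicit root stack, computing the residue of the tensor connection as the sum of residues, and checking that the weights of the tensor parabolic bundle are the sums of weights modulo the integral shift, so that the two monoidal structures are intertwined by the equivalence.
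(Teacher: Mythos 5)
Your outline gets the reduction right at the level of full faithfulness and of identifying the essential image, but it has a genuine gap at essential surjectivity, and this is precisely where the paper's real work lies. Theorem \ref{thm:car_hol_stackyA} only characterizes which logarithmic connections on $\mathfrak X$ are holomorphic in terms of their \emph{already existing} pushforward data: it says nothing about whether a given \parabolic connection $(\mathcal E_\cdot,\nabla_\cdot)$ downstairs arises from any connection upstairs. Given such an object, the bundle-level equivalence (Theorem \ref{thm:description}) produces a bundle $\mathcal F=\widehat{\mathcal E_\cdot}$ on $\mathfrak X$ with $\widehat{\mathcal F}_\cdot\simeq\mathcal E_\cdot$, but you must then \emph{construct} a logarithmic connection $\nabla$ on $\mathcal F$ inducing the family $\nabla_\cdot$ --- this is not ``a formal consequence once the functor of connections is in place,'' as you assert. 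Your generic-extension heuristic does not close this: a connection defined on $\mathfrak X\setminus\mathfrak D$ extends a priori only meromorphically, and proving the extension has at worst logarithmic poles on the coend $\widehat{\mathcal E_\cdot}=\int^{\frac{1}{r}\mathbb Z}\pi^*\mathcal E_\cdot\otimes\mathcal O_{\mathfrak X}(\cdot\, r\mathfrak D)$ is exactly the content that needs an argument. The paper supplies it (Proposition \ref{prop:larger_equivalence}) by first proving the \emph{larger} equivalence between all \preparabolic connections and all logarithmic connections on $\mathfrak X$, via the $\mathcal O$-linear reformulation: connections are sections of the logarithmic, resp.\ parabolic, Atiyah exact sequences (Corollaries \ref{cor:sections-logarithmic-Atiyah-extension} and \ref{cor:sections-parabolic-Atiyah-extension}), and the projection formula gives $P^1_{(X,D)/k}(\pi_*\mathcal F)\simeq\pi_*P^1_{(\mathfrak X,\mathfrak D)/k}(\mathcal F)$, so that Theorem \ref{thm:description} transports one Atiyah sequence to the other together with its sections. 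On the parabolic-to-stacky side the connection on the coend is produced by Lemma \ref{lem:connection_on_colimit} and Definition \ref{def:connection_on_coend}, again through the Atiyah-sequence interpretation. Only after this equivalence is established does Theorem \ref{thm:car_hol_stacky} cut out the two matching subcategories.

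A secondary point: your plan to verify monoidality by local residue computations addresses the wrong issue. Once the larger equivalence is known to be monoidal, preservation of holomorphicity under tensor products is automatic (residues of a tensor product add, and $0+0=0$); the genuine monoidal statement is that the pushforward/coend functors intertwine the convolution tensor product $\left(\mathcal E_\cdot\otimes\mathcal E'_\cdot\right)_{\frac{l}{r}}=\int^{\frac{m}{r}}\mathcal E_{\frac{m}{r}}\otimes\mathcal E'_{\frac{l-m}{r}}$ with the tensor product upstairs, which the paper settles formally by Fubini's theorem for coends rather than by any local model. Your full-faithfulness argument via scheme-theoretic density of $\mathfrak X\setminus\mathfrak D$ is essentially sound (the paper uses the same density device in the `if' direction of Theorem \ref{thm:car_hol_stacky}), but it becomes unnecessary once the Atiyah-sequence formulation is adopted, since full faithfulness and essential surjectivity then come together from the $\mathcal O$-linear equivalence.
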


Finally, inspired by \cite{IS:relation}, we show that if $(\mathcal E_\alpha,\nabla_\alpha)_{\alpha\in \frac{1}{r}\mathbb Z }$ is a \parabolic connection, the connection $\nabla_0$ on the underlying bundle $\mathcal{E}_0$ enables to reconstruct the parabolic structure. Via Theorem \ref{thm:correspondenceA}, this has the following rather surprising translation:

\begin{corollaryA}[Corollary \ref{cor:correspondence}]
\label{cor:correspondenceA}
Let $(\mathcal F,\nabla)$ and $(\mathcal F',\nabla')$ be two holomorphic connections on $\mathfrak{X}$, and $\pi:\mathfrak X \to X$ be the morphism to the moduli space. Then any isomorphism 
$(\pi_*\mathcal F,\pi_*\nabla)\simeq (\pi_*\mathcal F',\pi_*\nabla')$ lifts uniquely  to an isomorphism $(\mathcal F,\nabla)\simeq(\mathcal F',\nabla')$.

\end{corollaryA}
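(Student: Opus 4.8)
The plan is to transport the statement across the equivalence of Theorem~\ref{thm:correspondenceA} and then exploit the reconstruction of the parabolic structure from the weight-zero connection. Under that equivalence a holomorphic connection $(\mathcal F,\nabla)$ on $\mathfrak X$ corresponds to a \parabolic connection $(\mathcal E_\alpha,\nabla_\alpha)_{\alpha\in\frac1r\mathbb Z}$, and the pushforward functor $\pi_*$ corresponds to extraction of the weight-zero piece, $(\mathcal E_\alpha,\nabla_\alpha)_\alpha\mapsto(\mathcal E_0,\nabla_0)$; this is the standard behaviour of $\pi_*$ in the stacky--parabolic dictionary, now refined to connections by Theorem~\ref{thm:car_hol_stackyA}. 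So it suffices to prove the parabolic avatar of the corollary: any isomorphism $\phi_0\colon(\mathcal E_0,\nabla_0)\xrightarrow{\sim}(\mathcal E_0',\nabla_0')$ of logarithmic connections lifts uniquely to an isomorphism of \parabolic connections.

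For existence of the lift I would argue that $\phi_0$ is forced to respect the parabolic filtrations. Being horizontal, $\phi_0$ commutes with the residues along $D$: writing $\rho=\res(\nabla_0)$, $\rho'=\res(\nabla_0')$ and $\bar\phi_0=\phi_0|_D$, we get $\bar\phi_0\circ\rho=\rho'\circ\bar\phi_0$. By the \parabolic hypothesis $\rho$ and $\rho'$ are semisimple with eigenvalues the parabolic weights, so $\bar\phi_0$ carries the $\lambda$-eigenspace of $\rho$ isomorphically onto that of $\rho'$ for each weight $\lambda$. The reconstruction result expresses each $\mathcal E_\alpha$, for $\alpha\in[0,1)\cap\frac1r\mathbb Z$, intrinsically in terms of these eigenspaces sitting inside $\mathcal E_0$; hence $\phi_0(\mathcal E_\alpha)=\mathcal E_\alpha'$. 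Thus $\phi_0$ restricts to a compatible family $(\phi_\alpha)_\alpha$, which is the sought isomorphism of \parabolic connections.

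Uniqueness is then formal. For $\alpha\in[0,1)$ the sheaf $\mathcal E_\alpha$ is a subsheaf of $\mathcal E_0$, since the filtration is decreasing with $\mathcal E_0(-D)=\mathcal E_1\subseteq\mathcal E_\alpha\subseteq\mathcal E_0$; hence any parabolic lift of $\phi_0$ must coincide with the restriction of $\phi_0$ to $\mathcal E_\alpha$, and the periodicity $\mathcal E_{\alpha+1}\simeq\mathcal E_\alpha(-D)$ pins down $\phi_\alpha$ for every $\alpha\in\frac1r\mathbb Z$. Transporting this unique parabolic isomorphism back through Theorem~\ref{thm:correspondenceA} produces the unique lift $(\mathcal F,\nabla)\simeq(\mathcal F',\nabla')$.

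I expect the genuine content---and the only real obstacle---to lie in the reconstruction statement: that the parabolic flag is recovered from the eigenspace decomposition of $\res(\nabla_0)$. This is exactly where the \parabolic hypothesis (semisimplicity of the residue, with eigenvalues equal to the weights) is indispensable, since for a general logarithmic connection the residue would not detect the filtration. Once that is granted, the remainder is a transport through the tensor equivalence, the only point requiring care being the precise identification of $\pi_*$ with the weight-zero functor.
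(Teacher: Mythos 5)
Your proposal follows essentially the same route as the paper: the paper deduces Corollary~\ref{cor:correspondence} by transporting through the tensor equivalence of Theorem~\ref{thm:correspondence} (where $\widehat{\mathcal F}_{\mathbf 0}=\pi_*\mathcal F$ and $\widehat{\nabla}_{\mathbf 0}=\pi_*\nabla$ hold by definition, so $\pi_*$ is indeed the weight-zero functor) and then invoking Corollary~\ref{cor:parabolic-connection-semisimple-residues}, whose proof is exactly your horizontality-plus-eigenspace argument via the reconstruction result $F^w_\cdot=F^{\nabla_0}_\cdot$ of Proposition~\ref{prop:parabolic-connection-semisimple-residues}, with pseudo-periodicity handling compatibility of the lifted isomorphism with the connections. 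The reconstruction statement you correctly identify as the genuine content is available as a prior result in the paper, so your proof is complete and matches the paper's.
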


The corresponding statements for vector bundles or even for logarithmic connections are easily seen to be false.

\subsection{Content}%
\label{sub:content}

We now give more details about the structure of the article.

The first section (\S \ref{sec:stacky_preliminaries}) is a reminder of well-known results on stacks of roots. We recall how the hypothesis that we consider a strict normal crossings divisor implies that the stack of roots is smooth. We then turn to the definition of parabolic sheaves and their correspondence with vector bundles on the stack of roots.

In the next section (\S \ref{sec:connections_on_dm_stacks}), we concentrate on connections on Deligne-Mumford stacks. Our main reference is Martin Olsson's books (\cite[]{ols:book,ols:cry}). As for vector bundles, the small étale site is sufficient to get a good notion of a connection. We recall how Atiyah's exact sequence enables to see connections within the $\mathcal O$-linear world. Our next task is to define logarithmic connections. Even the definition of logarithmic differentials is a bit tricky, as the usual Zariski local definition on schemes is not canonical enough to be useful when it comes to Deligne-Mumford stacks. Instead, we use Martin Olsson fundamental insight that logarithmic differentials should be seen as (the pull-back of) the sheaf $\Omega^1_{ \mathbb{A}^1/[\mathbb{A}^1/ \mathbb{G}_m]}$. Even for schemes, this gives a global definition of logarithmic differentials that does not seem to be well-known, but that is intrinsic and generalizes immediately to Deligne-Mumford stacks.

In the main section (\S \ref{sec:the_correspondence}), we define parabolic connections, and interpret them as sections of the parabolic Atiyah exact sequence. We then show a reconstruction theorem à la Iyer-Simpson. We finally prove Theorem \ref{thm:car_hol_stackyA} and deduce Theorem \ref{thm:correspondenceA}. Despite the apparent similarity between this last theorem and the previous result for vector bundles (Theorem \ref{thm:description}), the proof is very different\footnote{In fact the proof of Theorem \ref{thm:correspondenceA} relies on Theorem \ref{thm:description}.}. The reason is that the strategy of the proof Theorem \ref{thm:description} does not work for connections, as they are not Zariski-locally sum of objects of rank $1$.

Finally, the last section (\S \ref{sec:towards_the_log-kummer_algebraic_fundamental_group}) contains some thoughts on a potential definition of the log-Kummer algebraic fundamental group.

\subsection{Conventions}%
\label{sub:conventions}

\subsubsection{Base field}%
\label{ssub:base_field}

We fix a base field $k$, often assumed to be perfect, and set $S=\Spec k$. In some cases, we will need to work over an arbitrary base scheme $S$, this will then be mentioned explicitly. 

\subsubsection{Algebraic stacks}%
\label{ssub:algebraic_stacks}

We follow the conventions of \cite{ols:book}: in particular, we consider stacks on the category $Sch/S$ of $S$-schemes endowed with the étale topology (the big étale site of $S$).

\subsubsection{Logarithmic and log-smooth context}%
\label{ssub:logarithmic_context}

In this context, we fix a $k$-scheme $X$ and a finite family $\mathbf D=(D_i)_{i\in I}$ of distinct effective integral Cartier divisors.

Most of the time, we assume that $X$ is a smooth $k$-scheme and that moreover  $D=\cup_{i\in I}D_i$ is a strict normal crossings divisor (\S \ref{ssub:normal_crossings}). We will then say that we are in the log-smooth context.

To this data, we will add a system of weights $\mathbf r= (r_i)_{i\in I}$, where each $r_i$ is a positive integer. This allows to define the stack of roots $\sqrt[\mathbf r]{X/\mathbf D}$, often denoted by $\mathfrak X_{\mathbf r}$, or even by $\mathfrak X$, when there is no ambiguity (\S \ref{ssub:stack_of_roots}). We denote by $\pi_{\mathbf r}$, or more often by $\pi$, the natural morphism $\mathfrak X_{\mathbf r}\rightarrow X$.  Each $D_i$ has a canonical $r_i$-th root $\mathfrak D_i$ on $\mathfrak X_{\mathbf r}$.

As we want to stick to Deligne-Mumford stacks, we will assume that each $r_i$ is invertible in $k$.

\subsubsection{Stacky context}%
\label{ssub:stacky_context}

We will also need to work in a more general situation, where $\mathfrak X/k$ is a smooth Deligne-Mumford stack.
Following the conventions in  \cite{ols:book}, quasi-coherent sheaves on $\mathfrak X$ will be considered as sheaves on the small étale site of $\mathfrak X$. Sometimes, the point of view of sheaves on a groupoid will also be useful. 

Finally, we are also in some cases led to endow $\mathfrak X/k$ with a finite family $\mathfrak D =(\mathfrak D_i)_{i\in I}$ of distinct effective integral Cartier divisors, thereby generalizing the situation in \S \ref{ssub:logarithmic_context}. We will use the natural combination of names: stacky logarithmic context, stacky log-smooth  context ...

\subsection{Acknowledgments}%
\label{sub:acknowledgments}

This project started from discussions of the first author with Mattia Talpo and Angelo Vistoli, who we thank heartily.

\section{Generalities on stacks of roots}%
\label{sec:stacky_preliminaries}

\subsection{Definition, flat presentation, and smoothness}%
\label{sub:algebraic_stacks}

For our purposes, one most useful Artin stack is the stack $\Div_S$ of generalized Cartier divisors: objects over the scheme $T\to S$ are pairs $(\mathcal L, s)$ where $\mathcal L$ is an invertible sheaf on $T$ and $s$ is a global section of $\mathcal L$ \footnote{Here our convention differs slightly from \cite{ols:book}.}. It is well-known that $\Div_S$ is isomorphic to the quotient stack $[\mathbb A^1/\mathbb G_m]$ (\cite[Proposition 10.3.7]{ols:book}). Similarly, given a finite set $I$, the power stack $\Div^I$ classifying families of generalized Cartier divisors indexed by $I$ is isomorphic to $[\mathbb A^I/\mathbb G_m^I]$.

\subsubsection{Stack of roots}%
\label{ssub:stack_of_roots}

\begin{definition}[]
\label{def:stack_of_roots}
Let $(\mathcal L,s)$ be a generalized Cartier divisor on a scheme $X$, and $r\in \mathbb N^*$ a positive integer. The stack of roots $\sqrt[r]{(\mathcal{L},s)/X}$ is the stack classifying $r$-th roots of $(\mathcal{L},s)$, that is generalized Cartier divisors $(\mathcal{M},t)$ endowed with an isomorphism $\mathcal{L}\simeq \mathcal{M}^{\otimes r}$ sending $s$ to $t^{\otimes r}$.
\end{definition}
In other words,  $\sqrt[r]{(\mathcal{L},s)/X}$ is given by the $2$-cartesian diagram:

\begin{center}
\begin{tikzcd}
{ \sqrt[r]{(\mathcal{L},s)/X}} \arrow[r] \arrow[d] & Div_S \arrow[d, "\times r"] \\
X \arrow[r, "{(\mathcal L,s)}"']                   & Div_S                      
\end{tikzcd}
\end{center}

The stack $\sqrt[r]{(\mathcal{L},s)/X}$ is Deligne-Mumford if $r$ is invertible on $S$ (\cite[Theorem 10.3.10]{ols:book}).

The diagram above shows that the construction of the stack of roots makes sense if $X$ is an algebraic stack, in particular, we can iterate it. This leads to the stack of roots associated to a finite family 

\[  \left( (\mathbf {\mathcal L},\mathbf{s}), \mathbf{r}\right) = \left( ({\mathcal L_i},s_i), r_i\right)_{i\in I}\]
where each $({\mathcal L_i},s_i)$ is a generalized Cartier divisor on $X$ and $r_i$ is a positive integer. By definition we set 

\[ \sqrt[\mathbf{r}]{{(\mathbf{\mathcal{L}}},\mathbf{s})/X}= \prod_{X,i\in I} \sqrt[r_i]{(\mathcal{L}_i,s_i)/X}\] 
where the fiber product on the right hand-side is taken over $X$.

We will in fact consider stacks of roots associated to genuine effective Cartier divisors, and will identify such a divisor $D$ with the associated generalized Cartier divisor $(\mathcal O_X(D), s_D)$, where $s_D$ denotes the canonical section. In other words, starting from the logarithmic context (\S \ref{ssub:logarithmic_context}), we put 
$$\sqrt[\mathbf{r}]{\mathbf{D}/X}= \sqrt[\mathbf{r}]{{(\mathcal O_X(\mathbf{D}),\mathbf{s})/X}}\;.$$
If $\pi:\mathfrak X \to X$ is the natural morphism to the moduli space, there is for each $i\in I$ a canonical Cartier divisor $\mathbf{\mathfrak D}_i$  on $\mathfrak X=\sqrt[\mathbf{r}]{\mathbf{D}/X}$ such that $\pi^* D_i= r_i \mathbf{\mathfrak D}_i$.

\subsubsection{Canonical flat presentation}%
\label{ssub:canonical_flat_presentation}

To an invertible sheaf $\mathcal L$ on $X$, we associate as usual the $\mathbb G_m$-torsor $\mathbb V(\mathcal L)\backslash\{0\}=\SPEC_X \Sym^{\pm}(\mathcal L)$. So from the data of $(D_i)_{i\in I}$ we get:
\begin{itemize}
	\item a $\mathbb G_m^I$-torsor $p_D:T_D\rightarrow X$, where $T_D= \prod_{X,i\in I}\mathbb V(\mathcal O_X(D_i))\backslash\{0\}$,
	\item a morphism $a_D:T_D\rightarrow \mathbb A^I$ corresponding to the canonical sections of the $D_i$'s.
\end{itemize}

In stacky terms, $T_D=X\times _{[\mathbb A^I/\mathbb G_m^I]}\mathbb A^I$. For the stack of roots $\mathfrak X$, one defines similarly $T_{\mathfrak{D}}=\mathfrak{X}\times _{[\mathbb A^I/\mathbb G_m^I]}\mathbb A^I$. This is a priori an algebraic space but in fact a scheme as $T_{\mathfrak{D}}=T_D\times_{\mathbb A^I}\mathbb A^I$ (\cite[Remark 4.14.]{BV:par}) . So we have a canonical $\mathbb G_m^I$-torsor $p_{\mathfrak{D}}:T_{\mathfrak{D}}\rightarrow \mathfrak{X}$ that enables to identify $\mathfrak{X}$ with the quotient stack $[T_{\mathfrak{D}}/\mathbb G_m^I]$, a very convenient fact to define logarithmic differentials in this context (see \S \ref{ssub:logarithmic_differentials}).

\subsubsection{Normal crossings}%
\label{ssub:normal_crossings}

For the rest of this section, we use the notations of the logarithmic context (\S \ref{ssub:logarithmic_context}).
We wish to give a condition ensuring the smoothness of the stack of roots $\sqrt[\mathbf{r}]{\mathbf{D}/X}$.

\begin{definition}[{\cite[\href{https://stacks.math.columbia.edu/tag/0CBN}{Tag 0CBN}]{stacks-project}}]
\label{def:ncd}
An effective Cartier divisor $D$ on a locally noetherian scheme $X$ has \emph{strict normal crossings} if for each $x\in D$, the local ring $\mathcal O_{X,x}$ is regular and there exists a regular system of parameters $x_1,\cdots,x_n$ in $\mathfrak m_x$ and an integer $m\in [1,n]$ such that $D$ admits $x_1\cdots x_m$ for equation at $x$. It has \emph{normal crossings} if it has strict normal crossings étale locally on $X$.
\end{definition}

We will use the abbreviation ncd (resp. sncd) for normal crossings divisor (resp. strict normal crossings divisor).

The following proposition is folklore (see for instance \cite[Example 2.5]{kato:log} and \cite[Example 1.2]{MO:Kawamata}) but since we couldn't find a proof in the literature, we provide one.

\begin{proposition}
\label{prop:sncd_etale_coordinates}

Let $k$ be a perfect field, $X$ be a locally algebraic $k$-scheme, and $D$ an be an effective divisor. The following are equivalent:
\begin{enumerate}[(i)]
	\item  $X$ is regular, and $D$ has strict normal crossings (resp. normal crossings),
	\item Zariski (resp. étale) locally on $X$ there exists an étale morphism $X\to \mathbb A^n_k=\Spec k[X_1,\cdots,X_n]$ and  an integer $m\in [0,n]$ such that $D$ is the pullback of the divisor given by $X_1\cdots X_m$ on $\mathbb A^n_k$.
\end{enumerate}
\end{proposition}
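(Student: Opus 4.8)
The plan is to prove the two implications separately, the substantial one being (i) $\Rightarrow$ (ii). Throughout I would use that, since $k$ is perfect and $X$ is locally algebraic, the local ring $\mathcal O_{X,x}$ is regular if and only if $X$ is smooth over $k$ at $x$; this is exactly what allows me to pass between the notions of a \emph{regular system of parameters} and of \emph{étale coordinates} via the Jacobian criterion. Without perfectness this equivalence fails, so it is the quiet hypothesis on which the whole argument rests.

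For (i) $\Rightarrow$ (ii), fix $x\in X$. If $x\notin D$ I simply take $m=0$ and produce étale coordinates near $x$ from smoothness of $X$, so assume $x\in D$. By Definition \ref{def:ncd} (in the sNCD case; in the NCD case I first replace $X$ by an étale neighborhood of $x$ on which $D$ becomes sNCD) there is a regular system of parameters $x_1,\dots,x_n$ of $\mathcal O_{X,x}$ with $x_1\cdots x_m$ a local equation of $D$ at $x$. Each $x_i$ is the germ of a regular function on some Zariski open $U\ni x$, so after shrinking $U$ I get a morphism $\phi=(x_1,\dots,x_n)\colon U\to\mathbb A^n_k$ with $\phi(x)=0$. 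The key point is that $\phi$ is étale at $x$: the classes of $x_1,\dots,x_n$ form a basis of $\mathfrak m_x/\mathfrak m_x^2$, so the cotangent map $\phi^*$ is an isomorphism; since $U$ and $\mathbb A^n_k$ are both smooth of dimension $n$ over $k$, this forces $\phi$ to be étale on a Zariski neighborhood of $x$. Finally, after shrinking $U$ so that $x_1\cdots x_m$ is an equation of $D$ on all of $U$ (possible because the ideal sheaf of the effective Cartier divisor $D$ agrees with $(x_1\cdots x_m)$ at $x$, hence on a neighborhood), I obtain $D|_U=\phi^*\{X_1\cdots X_m=0\}$.

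For the converse (ii) $\Rightarrow$ (i), regularity of $X$ is immediate, as $X$ is (étale-locally) étale over the regular scheme $\mathbb A^n_k$ and regularity is étale-local. For the crossings condition, fix $x$, let $\phi\colon U\to\mathbb A^n_k$ be the given étale morphism with $D|_U=\phi^*\{X_1\cdots X_m=0\}$, and set $\phi(x)=(a_1,\dots,a_n)$. Since $\phi$ is étale, the functions $y_i:=\phi^*X_i-a_i$ form a regular system of parameters at $x$, and the local equation $\prod_{i=1}^m\phi^*X_i=\prod_{i=1}^m(y_i+a_i)$ of $D$ at $x$ equals, up to a unit, the product of those $y_i$ with $a_i=0$ (for $a_i\neq0$ the factor $y_i+a_i$ is invertible at $x$). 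As these $y_i$ are part of a regular system of parameters, $D$ has strict normal crossings at $x$. Reading this in the Zariski-local (resp. étale-local) version of (ii) yields strict normal crossings (resp. strict normal crossings étale-locally, i.e. normal crossings), which matches the two cases of the statement.

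The main obstacle I anticipate lies entirely in the forward direction: upgrading a morphism whose differential is invertible at the single point $x$ to a genuinely étale morphism on a neighborhood, and simultaneously arranging that the stalk-level equation $x_1\cdots x_m$ of $D$ propagates to a global equation on $U$. Both are handled by the standard "shrink $U$" arguments, but they depend essentially on the perfectness of $k$ (so that regular $=$ smooth and the Jacobian criterion applies) and on $D$ being an effective Cartier divisor (so that a local equation at a point spreads to a neighborhood).
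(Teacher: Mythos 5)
Your route is essentially the paper's: reduce the ncd case to the sncd case by passing to an étale neighborhood, take $m=0$ off $D$, and at a point of $D$ promote a regular system of parameters containing a local equation of $D$ to an étale morphism to $\mathbb A^n_k$; your hands-on verification of (ii)$\Rightarrow$(i) (translating coordinates by $\phi(x)$ and absorbing the invertible factors $y_i+a_i$ with $a_i\neq 0$ into a unit) is just an unwinding of the two Stacks Project lemmas the paper cites, and is fine.

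There is, however, a genuine gap in your (i)$\Rightarrow$(ii): you fix an \emph{arbitrary} $x\in X$, but your argument only works at closed points, and you never reduce to them. At a non-closed point $x$ a regular system of parameters has length $\dim\mathcal O_{X,x}$, which is strictly smaller than the local dimension of $X$ at $x$, so your $\phi$ maps into an affine space of too-small dimension and can never be étale; your appeal to ``$U$ and $\mathbb A^n_k$ are both smooth of dimension $n$'' is simply false there. Relatedly, even at a closed point your assertion that ``the cotangent map $\phi^*$ is an isomorphism'' conflates the $k$-vector space $\mathfrak m_0/\mathfrak m_0^2$ with the $k(x)$-vector space $\mathfrak m_x/\mathfrak m_x^2$: when $k(x)\neq k$ the map itself is not an isomorphism, and the correct criterion concerns the fiber of $\Omega^1_{X/k}$. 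This is exactly where the paper is more careful: it first reduces to closed points (legitimate because $X$ is locally algebraic, hence it suffices to produce the neighborhoods of (ii) around closed points, as in Stacks Tag 02IL), and then uses the conormal exact sequence $0\to\mathfrak m_x/\mathfrak m_x^2\to(\Omega^1_{X/k})_x\otimes k(x)\to\Omega^1_{k(x)/k}\to 0$ together with $\Omega^1_{k(x)/k}=0$ --- which holds precisely because $x$ is closed and $k$ is perfect --- to conclude that $dx_1,\dots,dx_n$ is a basis of the fiber of $\Omega^1_{X/k}$ at $x$, whence étaleness of $(x_1,\dots,x_n)$ at $x$ by the criterion of Bosch--L\"utkebohmert--Raynaud, \S 2.2, Corollary 10. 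You correctly identified perfectness as the quiet hypothesis, but the step where it actually enters (closed point plus perfect field giving $\Omega^1_{k(x)/k}=0$, hence cotangent space $=$ fiber of $\Omega^1$) is missing from your write-up, and without the reduction to closed points the statement you need is not even true pointwise. Both repairs are standard, so the proof is salvageable along exactly the paper's lines.
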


\begin{proof}
	The resp. claim follows from the main one. The implication $(ii)\implies(i)$ follows from the facts that "Smooth over a field implies regular" \cite[\href{https://stacks.math.columbia.edu/tag/056S}{Tag 056S}]{stacks-project} and "Pullback of a strict normal crossings divisor by a smooth morphism is a strict normal crossings divisor" 
	\cite[\href{https://stacks.math.columbia.edu/tag/0CBP}{Tag 0CBP}]{stacks-project}. Let us show  the implication $(i)\implies(ii)$. As $k$ is perfect, $X$ is $k$-smooth by \cite[\href{https://stacks.math.columbia.edu/tag/0B8X}{Tag 0B8X}]{stacks-project}. Let $x$ be a closed point of $X$, it is enough to show the result around $x$ by \cite[\href{https://stacks.math.columbia.edu/tag/02IL}{Tag 02IL}]{stacks-project}. If $x\notin D$, the result follows from the existence of étale coordinates for smooth schemes \cite[\href{https://stacks.math.columbia.edu/tag/054L}{Tag 054L}]{stacks-project}. If $x\in D$, let $x_1,\cdots,x_n$ be a regular system of parameters as in Definition \ref{def:ncd}. By 
the proof of \cite[\href{https://stacks.math.columbia.edu/tag/00TV}{Tag 00TV}]{stacks-project}, the sequence

\[
	0\rightarrow \frac{\mathfrak m_x}{\mathfrak m_x^2}\xrightarrow{d}
\left(\Omega^1_{X/k}\right)_x\otimes_{\mathcal O_{X,x}} k(x)  \rightarrow \Omega^1_{k(x)/k}\rightarrow 0
\] 
is exact, and since $x$ is a closed point and $k$ is perfect, we have also that $\Omega^1_{k(x)/k}=0$. Hence $(dx_1,\cdots,dx_n)$ form a basis of $\left(\Omega^1_{X/k}\right)_x$, and by \cite[\S 2.2 Corollary 10]{BLR:Neron}, the morphism $(x_1,\cdots,x_n):X\rightarrow \mathbb A^n$ is étale at $x$ (that is $(x_1,\cdots,x_n)$ are étale coordinates at $x$), and the claim follows.
\end{proof}

We are now able to prove the smoothness of the stack of roots with respect a sncd divisor.

\begin{proposition}
\label{prop:sncd_implies_smooth}
Let $k$ be a perfect field, $X$ be a smooth $k$-scheme, $(D_i,r_i)_{i\in I}$ be a finite family of distinct effective integral Cartier divisors endowed with positive integers, invertible in $k$. Assume that the divisor $D=\cup_{i\in I}D_i$ is a sncd. Then the stack of roots $\mathfrak X=\sqrt[\mathbf{r}]{{(\mathbf{D},\mathbf{s})/X}}$ is $k$-smooth.
\end{proposition}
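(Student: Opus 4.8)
The plan is to reduce the smoothness of $\mathfrak X$ to that of its canonical flat presentation, and then to a completely explicit local computation. Recall from \S\ref{ssub:canonical_flat_presentation} that $\mathfrak X=[T_{\mathfrak D}/\mathbb G_m^I]$, with the projection $p_{\mathfrak D}:T_{\mathfrak D}\to\mathfrak X$ a $\mathbb G_m^I$-torsor; since $\mathbb G_m^I$ is smooth over $k$, this is a smooth surjective atlas. Hence $\mathfrak X$ is smooth over $k$ as soon as the scheme $T_{\mathfrak D}$ is, and the whole problem becomes a statement about the scheme $T_{\mathfrak D}$.

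Next I would localize. The formation of the stack of roots commutes with base change on $X$ (it is defined by a fibre product over $\Div_S$), so for an open $U\subseteq X$ one has $\pi^{-1}(U)=\sqrt[\mathbf r]{\mathbf D|_U/U}$, and these open substacks cover $\mathfrak X$; correspondingly $T_{\mathfrak D}$ is covered by its restrictions over the various $U$. As smoothness over $k$ is local, I may work in a Zariski neighborhood $U$ of an arbitrary closed point $x\in X$. By Proposition \ref{prop:sncd_etale_coordinates} I choose $U$ together with an étale morphism $g:U\to\mathbb A^n_k$ and an integer $m$ so that $D|_U=g^{-1}\{X_1\cdots X_m=0\}$; after shrinking $U$ and reindexing, the divisors through $x$ are the $D_i$ with $D_i|_U=g^{-1}\{X_i=0\}$ for $i$ in some subset of cardinality $m$, while the remaining $D_j$ do not meet $U$. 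Using base change once more, $\pi^{-1}(U)$ is the fibre product over $\mathbb A^n$ of $U$ with the standard root stack $\mathfrak A=\sqrt[\mathbf r]{(\{X_i=0\})_i/\mathbb A^n_k}$ of the first $m$ coordinate hyperplanes, the empty factors $D_j$ contributing trivially because a root stack is an isomorphism away from its divisor. Thus $T_{\mathfrak D}|_U\to T_{\mathfrak A}$ is étale, and it suffices to check that $T_{\mathfrak A}$ is smooth over $k$.

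Finally I would compute $T_{\mathfrak A}$ directly. With trivial line bundles and canonical sections $X_i$, the presentation reads
\[
T_{\mathfrak A}=\bigl\{(X,\lambda,z)\in\mathbb A^n\times\mathbb G_m^m\times\mathbb A^m \ :\ \lambda_i X_i=z_i^{r_i},\ 1\le i\le m\bigr\},
\]
where $\lambda$ records the $\mathbb G_m^m$-torsor $T_D$ and $z$ is the base change along the $\mathbf r$-th power map $\mathbb A^m\to\mathbb A^m$. Since each $\lambda_i$ is invertible, the relation solves as $X_i=\lambda_i^{-1}z_i^{r_i}$ for $1\le i\le m$, so $(X_{m+1},\dots,X_n,z_1,\dots,z_m,\lambda_1,\dots,\lambda_m)$ are free coordinates and $T_{\mathfrak A}\cong\mathbb A^{n-m}\times(\mathbb A^1\times\mathbb G_m)^m$, which is manifestly smooth over $k$. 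Tracing back through the étale map and the atlas, $T_{\mathfrak D}$ is smooth over $k$, and therefore so is $\mathfrak X$.

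I expect the main difficulty to be bookkeeping rather than conceptual. The delicate points are: correctly matching the given family $(D_i)_{i\in I}$ to coordinate hyperplanes via Proposition \ref{prop:sncd_etale_coordinates}, disposing of the components not passing through $x$ (for which the corresponding root-stack factors are trivial), and being careful that the reductions are legitimate, i.e. that smoothness over $k$ does pass from the smooth atlas $T_{\mathfrak D}$ to $\mathfrak X$ and is preserved under the étale base changes. The explicit computation of the presentation is elementary and, incidentally, does not even use the invertibility of the $r_i$ in $k$, which is needed elsewhere only to stay in the Deligne--Mumford setting.
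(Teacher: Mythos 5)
Your proof is correct, but it takes a genuinely different route in the key step. Both arguments begin identically: Zariski-localize, invoke Proposition \ref{prop:sncd_etale_coordinates} to get an étale $g:U\to\mathbb A^n$, and match the individual $D_i$ through $x$ with the coordinate hyperplanes (the paper carries out this matching explicitly via factoriality of the regular local ring $\mathcal O_{X,x}$ and primality of the local equations $f_i$ --- exactly the bookkeeping you flag --- so your gloss is legitimate), thereby reducing to the model root stack over $\mathbb A^n$. The divergence is in which presentation is used. The paper exhibits an \emph{étale} chart: it writes the model as $[\mathbb A^n/\mu_{\mathbf r}]$ and forms $Y=X\times_{\mathbb A^n}\mathbb A^n$ along the $\mathbf r$-th power (Kummer) map, so $Y\to\mathbb A^n$ is étale as a base change of $g$, hence $Y$ is smooth, and $Y\to\mathfrak X$ is an étale chart --- this is where invertibility of the $r_i$ enters, via étaleness of $\mu_{\mathbf r}$. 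You instead use the canonical \emph{smooth} presentation $T_{\mathfrak D}\to\mathfrak X$ of \S\ref{ssub:canonical_flat_presentation} and compute the model torsor $T_{\mathfrak A}=\{\lambda_i X_i=z_i^{r_i}\}\cong\mathbb A^{n-m}\times(\mathbb A^1\times\mathbb G_m)^m$ explicitly, which is indeed manifestly smooth; your computation agrees with the description $T_{\mathfrak D}=T_D\times_{\mathbb A^I}\mathbb A^I$ recorded in the paper. What each buys: your argument, as you observe, never uses invertibility of the $r_i$, so it proves smoothness of the root stack as an Artin stack in arbitrary characteristic (invertibility is only needed for the Deligne--Mumford property); the paper's argument, at the cost of assuming $r_i$ invertible (which it does anyway, per its conventions), produces a smooth scheme that is an \emph{étale} chart, directly exhibiting $\mathfrak X$ as a smooth DM stack in the form used throughout the paper (small étale site). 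One minor inaccuracy, immaterial to the conclusion: for indices $j$ with $D_j\cap U=\emptyset$, the corresponding torsor factors contribute trivialized $\mathbb G_m$-factors to $T_{\mathfrak D}|_U$, so the map $T_{\mathfrak D}|_U\to T_{\mathfrak A}$ is smooth of positive relative dimension rather than étale unless you absorb those factors into $T_{\mathfrak A}$; smoothness of $T_{\mathfrak A}$ still suffices for your purpose.
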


\begin{proof}
	Since the property is Zariski local on $X$, we can assume that each $D_i=\princ(f_i)$ is principal. Let $x$ be a closed point of $X$, by shrinking further, we can assume that $x\in D_i$ for all $i\in I$. The local equation of $D$ at $x$ is given by $\prod_{i\in I}f_i$ and the $D_i$'s being integral by assumption, the $f_i$'s are prime, hence irreducible. The local ring $\mathcal O_{X,x}$ is regular, hence factorial so the hypothesis that $D$ is sncd shows that set $\{f_i,i\in I\}$ can be ordered into a part of regular system of parameters at $x$, say $(x_1,\cdots,x_m)$. We complete it into a full regular system of parameters $(x_1,\cdots,x_n)$, which defines an étale morphism $X\to \mathbb A^n$. By affecting the integer $r_i=1$ to the $n-m$ last $x_i$'s, we don't change the stack of roots, in other words we get a commutative diagram with cartesian squares:

\begin{center}
	\begin{tikzcd}
Y \arrow[d] \arrow[r]           & \mathbb A^n \arrow[d] \arrow[rd]                        &                                           \\
\mathfrak X \arrow[d] \arrow[r] & {[\mathbb A^n/\mu_{\mathbf{r}}]} \arrow[d] \arrow[r] & {[\mathbb A^n/\mathbb G_m^n]} \arrow[d] \\
X \arrow[r]                     & \mathbb A^n \arrow[r]                                   & {[\mathbb A^n/\mathbb G_m^n]}          
\end{tikzcd}
\end{center}
Since $X\to \mathbb A^n$ is étale at $x$, by shrinking $X$ again, we can assume it is étale. Thus $Y\to \mathbb A^n$ is also étale, and so $Y$ is $k$-smooth. Since $Y\to\mathfrak X$ is an étale chart, we are done.

\end{proof}

\begin{remark}
\label{rem:sncd_implies_smooth}
\begin{enumerate}
	\item See also \cite[Proposition 3.9]{BLS:geo} for a slightly different approach.
	\item The claim would be wrong if one would only assume that $D$ is a ncd. To get a smooth stack of roots in the ncd case, one needs to use Olsson's definition, see \cite{BV:par}. This more elaborate formalism is out of the scope of the present article.

\end{enumerate}

\end{remark}

Let us now mention how to generalize the notion of (strict) normal crossings divisor to a locally noetherian Deligne-Mumford stack $\mathfrak{X}$. First, an effective Cartier divisor $ \mathfrak{D}\subset \mathfrak{X}$ is a closed substack that is an effective Cartier divisor in an étale chart (equivalently, such that the ideal sheaf $\mathcal I_\mathfrak{D} \subset \mathcal{O}_{\mathfrak{X}}$ is invertible).

\begin{definition}[]
\label{def:ncd-DM}
Let $\mathfrak{D}$ be an effective Cartier divisor on a locally noetherian Deligne-Mumford stack  $\mathfrak{X}$. We will say that:

\begin{enumerate}
	\item $\mathfrak{D}$ has normal crossings if this is true in an étale chart,
	\item $\mathfrak{D}$ has strict normal crossings if it has normal crossings and its irreducible components are regular.
\end{enumerate}

\end{definition}

\begin{remark}
\label{rem:ncd-DM}
\begin{enumerate}

\item On a scheme, according to \cite[Lemma 1.8.4]{G-M:tame}, the definition of sncd coincides with Definition \ref{def:ncd}.
\item It follows from the proof of Proposition \ref{prop:sncd_implies_smooth} that if $\mathfrak D$ is the Cartier divisor on $\sqrt[\mathbf{r}]{\mathbf{D}/X}$ whose irreducible components are $( \mathfrak{D}_i)_{i\in I}$, then $\mathfrak D$ has strict normal crossings.
\end{enumerate}

\end{remark}

\subsection{Locally free sheaves on the stack of roots and parabolic vector bundles}%
\label{sub:locally_free_sheaves_on_the_stack_of_roots_and_parabolic_vector_bundles}

In this section, we recall the main result of \cite[]{bor:rep}. We use the notations of the log-smooth context (\S \ref{ssub:logarithmic_context}).

Let us define parabolic vector bundles, following Carlos Simpson's formulation. We define the poset $\frac{1}{\mathbf r}\mathbb Z^I = \prod_{i\in I} \frac{\mathbb Z}{r_i} $ (with component-wise partial order), and identify it with the corresponding category. We write $\cdot ^{op}$ for the opposite category, and $\Vect(X)$ for the category of vector bundles on $X$.

\begin{definition}
\label{def:parabolic_bundle}
	
A \emph{parabolic vector bundle} on $(X, \mathbf D)$ with weights in
$\frac{1}{\mathbf r}\mathbb Z^I$ consists of
\begin{itemize}
\item the data of a functor $\mathcal E_\cdot \,:\, \left(\frac{1}{\mathbf r}
	\mathbb Z^I\right)^{op} \longrightarrow \Vect X$ and,

\item for each integral multi-index $\mathbf l$ in $\mathbf Z^I$, a natural
	transformation $p_{ \mathbf{l}	}: \mathcal E_{\cdot+\mathbf l} \simeq 
\mathcal E_{\cdot}\otimes_{\mathcal O_X} \mathcal O_X(-\mathbf l \cdot \mathbf D)$,
\end{itemize}
such that the following compatibility condition holds: for $\mathbf l \geq \mathbf 0$, the diagram of natural transformations

\begin{equation}
	\label{eq:parabolic_bundle}
	\begin{tikzcd}
\mathcal E_{\cdot+\mathbf l}  \arrow[rrd] \arrow[dd]                                          &  &                     \\
                                                                                              &  &  \mathcal E_{\cdot} \\
\mathcal E_{\cdot}\otimes_{\mathcal O_X} \mathcal O_X(-\mathbf l \cdot \mathbf D) \arrow[rru] &  &                    
\end{tikzcd}
\end{equation}
is commutative.

\end{definition}
For a more formal definition, see \cite[D\'efinition 2.1.2]{bor:rep}. We will most often omit the pseudo-periodicity isomorphism $p_\cdot$ from the notation and thus write $\mathcal E_\cdot$ instead of $(\mathcal E_\cdot,p_\cdot)$.  We denote by $\Par_{\frac{\mathbf 1}{\mathbf r}}(X,\mathbf D)$ the corresponding category.

\begin{remark}
\label{rem:parabolic_bundle}
The existence of the pseudo-periodicity isomorphisms implies that a parabolic bundle is determined, up to isomorphism, by its restriction to the fundamental domain $\frac{1}{\mathbf r}\mathbb Z^I\cap [0,1[^I$. The fact that $\textbf{D}$ is a family of (strict) normal crossings divisors implies much more, namely according to the forthcoming Lemma \ref{lem:parabolic_bundle_restricted_axes} a parabolic bundle is even determined by its restriction to the axes in $\frac{1}{\mathbf r}\mathbb Z^I\cap [0,1[^I$.

\end{remark}

\begin{lemma}
\label{lem:parabolic_bundle_restricted_axes}
Let $\mathcal E_\cdot$ be an object of $\Par_{\frac{\mathbf 1}{\mathbf r}}(X,\mathbf D)$ and 
$ \mathbf{l}$, $ \mathbf{l'}$ in $\mathbb Z^I$ such that $\mathbf{l}\leq \mathbf{l'} \leq \mathbf{l}+\mathbf{r}$. Then if $(e_i)_{i\in I}$ denotes the canonical basis of $\mathbb Z^I$,  we have as subsheaves of  \( \mathcal{E}_{\frac{	\mathbf{l}}{ \mathbf{r}}} \):
\[ \mathcal{E}_{\frac{	\mathbf{l'}}{ \mathbf{r}}} = \bigcap_{i\in I} \mathcal{E}_{\frac{	\mathbf{l}+(l'_i-l_i)e_i}{ \mathbf{r}}} \]

\end{lemma}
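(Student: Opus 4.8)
The plan is to verify the identity locally, via Borne's equivalence between parabolic bundles and vector bundles on the root stack \cite{bor:rep}, by transporting everything to $\mathfrak X$ where the divisors $\mathfrak D_i$ become honest Cartier divisors and the fractional weights are resolved. Write $k_i = l'_i - l_i$, so that $0 \le k_i \le r_i$ by hypothesis. The inclusion $\mathcal E_{\mathbf l'/\mathbf r} \subseteq \bigcap_i \mathcal E_{(\mathbf l + k_i e_i)/\mathbf r}$ is immediate from functoriality, since $\frac{\mathbf l'}{\mathbf r} \ge \frac{\mathbf l + k_i e_i}{\mathbf r}$ for each $i$ and the structural maps are subsheaf inclusions. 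The content is the reverse inclusion, and this is exactly where the strict normal crossings hypothesis enters.

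First I would use the equivalence of \cite{bor:rep}: there is a vector bundle $\mathcal F$ on $\mathfrak X$ such that, writing $\mathcal G = \mathcal F(-\sum_i l_i \mathfrak D_i)$ for the twist corresponding to the base weight $\frac{\mathbf l}{\mathbf r}$, one has $\mathcal E_{\mathbf l/\mathbf r} = \pi_* \mathcal G$, $\mathcal E_{(\mathbf l + k_i e_i)/\mathbf r} = \pi_*\bigl(\mathcal G(-k_i\mathfrak D_i)\bigr)$ and $\mathcal E_{\mathbf l'/\mathbf r} = \pi_*\bigl(\mathcal G(-\sum_i k_i \mathfrak D_i)\bigr)$, all compatibly realized as subsheaves of $\pi_*\mathcal G$. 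Since $\pi_*$ is left exact it preserves finite intersections of subsheaves, so $\bigcap_i \mathcal E_{(\mathbf l+k_ie_i)/\mathbf r} = \pi_*\bigl(\bigcap_i \mathcal G(-k_i\mathfrak D_i)\bigr)$ inside $\pi_*\mathcal G$. The lemma therefore reduces to the following identity of subsheaves of $\mathcal G$ on $\mathfrak X$: $\mathcal G(-\sum_i k_i\mathfrak D_i) = \bigcap_i \mathcal G(-k_i \mathfrak D_i)$.

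The heart of the argument is this last identity, which I would check on the small étale site of $\mathfrak X$, and which is the main obstacle. By Remark \ref{rem:ncd-DM} the divisor with components $(\mathfrak D_i)_{i\in I}$ has strict normal crossings on the smooth stack $\mathfrak X$; hence on a suitable étale chart $U$ the sheaf $\mathcal G$ is free and each $\mathfrak D_i$ is cut out by an element $t_i$ that can be completed to a regular system of parameters, exactly as in the proof of Proposition \ref{prop:sncd_implies_smooth}. Writing $M$ for the free module $\mathcal G|_U$, the twists become $\mathcal G(-k_i\mathfrak D_i)|_U = t_i^{k_i} M$, so the identity reduces to $\bigcap_i (t_i^{k_i}) = \bigl(\prod_i t_i^{k_i}\bigr)$ in the local ring $\mathcal O_{\mathfrak X, u}$ (the $\mathfrak D_i$ not passing through $u$ contributing trivially). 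This local ring is regular, hence factorial, and distinct members of a regular system of parameters are pairwise non-associate primes; the monomials $t_i^{k_i}$ are therefore pairwise coprime, and in a unique factorization domain the intersection of the ideals they generate equals the ideal generated by their least common multiple, namely $\prod_i t_i^{k_i}$. This establishes the reverse inclusion and completes the proof. The one delicate point to state carefully is the passage to the stacky étale chart and the compatibility of Borne's subsheaf description with the twists by the $\mathfrak D_i$; the purely commutative-algebra core is then the coprimality fact, which fails without the (strict) normal crossings assumption.
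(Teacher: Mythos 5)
Your proposal is correct, but it takes a genuinely different route from the paper. The paper never leaves $X$: it sandwiches the intersection via the observation that $\mathcal{E}_{\frac{\mathbf{l}+(l'_i-l_i)e_i}{\mathbf{r}}} \subset \mathcal{E}_{\frac{\mathbf{l'}-\mathbf{r}}{\mathbf{r}}+e_i} = \mathcal{E}_{\frac{\mathbf{l'}-\mathbf{r}}{\mathbf{r}}}(-D_i)$, so that everything is reduced to the single identity $\mathcal{E}_{\frac{\mathbf{l'}-\mathbf{r}}{\mathbf{r}}}(-\sum_{i\in I}D_i) = \bigcap_{i\in I}\mathcal{E}_{\frac{\mathbf{l'}-\mathbf{r}}{\mathbf{r}}}(-D_i)$ for one fixed locally free sheaf, proved exactly by the coprimality-in-a-factorial-local-ring argument you use --- but downstairs on $X$ and only with multiplicity one, the pseudo-periodicity having absorbed the exponents $k_i$. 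Your detour through the root stack is valid: the identification $\mathcal{E}_{\mathbf{m}/\mathbf{r}} = \pi_*(\mathcal{F}(-\mathbf{m}\mathfrak{D}))$ from Theorem \ref{thm:description}, left exactness of $\pi_*$ (which does preserve finite intersections of subsheaves), the sncd property of $(\mathfrak{D}_i)_{i\in I}$ from Remark \ref{rem:ncd-DM}, and the UFD computation $\bigcap_i(t_i^{k_i}) = (\prod_i t_i^{k_i})$ are all sound, and your argument has the merit of making the statement conceptually transparent (upstairs all weights are integral, so the lemma is just the elementary intersection identity for honest Cartier twists). What it costs is invoking the full equivalence of Theorem \ref{thm:description} --- a much deeper result, quoted from \cite{bor:rep} --- to prove what the paper intends as a basic structural fact about parabolic bundles; note also that in the paper the lemma precedes Theorem \ref{thm:description} and underpins Remark \ref{rem:parabolic_bundle}, so while there is no strict circularity (the theorem is proved independently in \cite{bor:rep}), you should at least record that the external proof does not rely on this exact statement. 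The instructive point of the comparison is that the commutative-algebra core is identical in both proofs; the paper's sandwich trick shows the stacky machinery is dispensable here.
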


\begin{proof}
	The inclusion $\mathcal{E}_{\frac{	\mathbf{l'}}{ \mathbf{r}}} \subset \bigcap_{i\in I} \mathcal{E}_{\frac{	\mathbf{l}+(l'_i-l_i)e_i}{ \mathbf{r}}} $ is clear. For the other direction, let us first remark that for each $i\in I$ we have  
	$$\mathcal{E}_{\frac{	\mathbf{l}+(l'_i-l_i)e_i}{ \mathbf{r}}} \subset\mathcal{E}_{\frac{	\mathbf{l'}-\mathbf{r}}{ \mathbf{r}}+e_i} =  \mathcal{E}_{\frac{	\mathbf{l'}-\mathbf{r}}{ \mathbf{r}}} (-D_i) \; .$$
It follows that $\bigcap_{i\in I} \mathcal{E}_{\frac{	\mathbf{l}+(l'_i-l_i)e_i}{ \mathbf{r}}} \subset \bigcap_{i\in I} \mathcal{E}_{\frac{	\mathbf{l'}-\mathbf{r}}{ \mathbf{r}}} (-D_i)$. But there is also a natural inclusion
$$\mathcal{E}_{\frac{	\mathbf{l'}}{ \mathbf{r}}}=\mathcal{E}_{\frac{	\mathbf{l'}-\mathbf{r}}{ \mathbf{r}}}(-\sum_{i\in I}D_i) \subset \bigcap_{i\in I} \mathcal{E}_{\frac{	\mathbf{l'}-\mathbf{r}}{ \mathbf{r}}} (-D_i)\; . $$
The fact that $\mathcal{E}_{\frac{	\mathbf{l'}-\mathbf{r}}{ \mathbf{r}}}$ is locally free and that the local equations of the $D_i$'s are coprime shows that this last inclusion is in fact an equality, which proves the result.
\end{proof}

\begin{definition}[]
\label{def:functors_F_G}

\begin{enumerate}
\item 

To each vector bundle $\mathcal F$ on $\mathfrak X$, one associates a parabolic 
vector bundle $\widehat{\mathcal F}_\cdot$ on $(X,\mathbf D)$ with weights in 
$\frac{1}{\mathbf r}\mathbb Z^I$ in the following way: if $\mathbf l$ belongs to 
$\mathbb Z^I$, one defines $\widehat{\mathcal F}_{\frac{\mathbf l}{\mathbf r}} =\pi_* 
\left(\mathcal F\otimes_{\mathcal O_{\mathfrak X}} \mathcal O_{\mathfrak X}( -\mathbf 
l \mathbf{\mathfrak D  })\right)$.

\item 
Conversely, let $\mathcal E_\cdot$ be an object in $\Par_{\frac{1}{\mathbf r}}(X,\mathbf D)$. One associates to this parabolic vector bundle a vector bundle on the stack of roots defined by:

$$\widehat{\mathcal E_\cdot}=\int^{\frac{1}{\mathbf r}\mathbb Z^I}\pi^*\mathcal E_\cdot \otimes \mathcal O_{\mathfrak X}( \cdot  \mathbf r \mathbf{\mathfrak D ) } $$
where $\int^{\frac{1}{\mathbf r}\mathbb Z^I}$ stands for the coend\footnote{See \cite[IX \S6]{MCL:categories} or \cite[Appendice B]{bor:corr} for a summary}.
\end{enumerate}
\end{definition}

\begin{theorem}[{\cite[Th\'eor\`eme 2.4.7]{bor:rep}}]
\label{thm:description}
The functors  $\mathcal E_\cdot \mapsto \widehat{\mathcal E_\cdot}$ and $\mathcal F \mapsto \widehat{\mathcal F}_\cdot$ define inverse equivalences between the categories $\Par_{\frac{1}{\mathbf r}}(X,\mathbf D)$ and $\Vect(\sqrt[\mathbf{r}]{\mathbf{D}/X})$.
\end{theorem}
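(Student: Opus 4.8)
The plan is to exhibit $\widehat{(-)}$ and $\widehat{(-)}_\cdot$ as an adjoint pair whose unit and counit are isomorphisms, thereby reducing the statement to an explicit local computation. First I would check that the two functors are well defined. That $\widehat{\mathcal F}_\cdot$ is a parabolic bundle is essentially formal: functoriality in the weight is built into the definition, and the pseudo-periodicity isomorphism $\widehat{\mathcal F}_{\cdot+\mathbf l}\simeq\widehat{\mathcal F}_\cdot\otimes\mathcal O_X(-\mathbf l\cdot\mathbf D)$ is the projection formula combined with the identity $\pi^*\mathcal O_X(\mathbf l\cdot\mathbf D)=\mathcal O_{\mathfrak X}(\mathbf l\,\mathbf r\,\mathfrak D)$; the local freeness of each $\widehat{\mathcal F}_{\mathbf l/\mathbf r}$ will fall out of the local analysis below. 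That $\widehat{\mathcal E_\cdot}$ is a vector bundle (a priori the coend is only quasi-coherent) is the delicate point, deferred to the same local model. The adjunction itself is purely formal: using the universal property of the coend (turning a coend in the source into an end of $\Hom$'s), the tensor–$\Hom$ adjunction, and $\pi^*\dashv\pi_*$, one obtains natural isomorphisms
\[
\Hom_{\mathfrak X}\bigl(\widehat{\mathcal E_\cdot},\mathcal F\bigr)\;\cong\;\int_{\alpha}\Hom_X\bigl(\mathcal E_\alpha,\ \pi_*(\mathcal F\otimes\mathcal O_{\mathfrak X}(-\alpha\,\mathbf r\,\mathfrak D))\bigr)\;=\;\Hom_{\Par}\bigl(\mathcal E_\cdot,\widehat{\mathcal F}_\cdot\bigr),
\]
so that $\widehat{(-)}$ is left adjoint to $\widehat{(-)}_\cdot$. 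The theorem then amounts to proving that the unit $\mathcal E_\cdot\to\widehat{\widehat{\mathcal E_\cdot}}_\cdot$ and the counit $\widehat{\widehat{\mathcal F}_\cdot}\to\mathcal F$ are isomorphisms.

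Both assertions are étale-local on $X$: the pushforward $\pi_*$ satisfies flat base change, while the coend defining $\widehat{\mathcal E_\cdot}$ is built from $\pi^*$, tensor products and a colimit, each of which commutes with pullback along $X'\to X$. Using the fiber-product description $\mathfrak X=\prod_{X,i}\sqrt[r_i]{D_i/X}$ I would first reduce to $|I|=1$ by iteration, and then, by the local structure underlying Proposition \ref{prop:sncd_implies_smooth}, assume $\mathfrak X=[\Spec B/\mu_{\mathbf r}]$ with $A$ the coordinate ring of an étale chart, $B=A[u_i;i\in I]/(u_i^{r_i}-x_i)$, and $\mu_{\mathbf r}=\prod_i\mu_{r_i}$ acting by scaling the $u_i$. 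Here $B$ is free over $A$, and since each $r_i$ is invertible the isotypic decomposition splits a vector bundle on $\mathfrak X$ — a $\mu_{\mathbf r}$-equivariant, equivalently $\prod_i\mathbb Z/r_i$-graded, locally free $B$-module $M$ — as $M=\bigoplus_{\mathbf j}M_{\mathbf j}$ with each graded piece locally free over $A$, the maps induced by multiplication by the $u_i$ being exactly the transition maps of the associated parabolic bundle.

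The heart of the argument is to verify the unit and counit on this model, and for this I would reduce to rank one. Over a (semi-)local base both sides split into rank-one objects: a graded locally free $B$-module decomposes into graded summands of the form $\mathcal O_{\mathfrak X}(\mathbf c\,\mathfrak D)\otimes\pi^*L$, while the corresponding parabolic bundle, being a filtration of a free module over the regular local ring $A$, splits into rank-one parabolic line bundles — it is precisely this splitting that fails for connections (cf.\ the discussion in \S\ref{sub:content}). On a rank-one object the coend telescopes: pseudo-periodicity together with the coend relations identifies the summand at weight $\alpha+\mathbf m$ with the one at $\alpha$ for $\mathbf m\in\mathbb Z^I$, so the a priori infinite colimit collapses to a finite direct sum over a fundamental domain, which is visibly locally free and visibly recovers the graded line bundle one started from; dually the pushforward recovers the jumps of the parabolic line bundle. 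Both unit and counit are then isomorphisms on these generators, hence, by additivity and naturality, in general.

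I expect the main obstacle to be exactly the coend computation: proving $\widehat{\mathcal E_\cdot}$ is locally free by showing, via pseudo-periodicity and the sncd hypothesis (coprimality of the local equations of the $D_i$, as exploited in Lemma \ref{lem:parabolic_bundle_restricted_axes}), that the infinite-poset colimit collapses to a finite locally free sheaf, and then checking that the rank-one splittings on the two sides are compatible enough for the resulting local isomorphisms to be natural and to glue. The weight bookkeeping in the multi-index case and the verification that the generators on each side correspond under the functors are the remaining, more routine, technical points.
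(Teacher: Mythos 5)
Your proposal follows essentially the same route as the proof of the cited reference \cite[Th\'eor\`eme 2.4.7]{bor:rep} (the paper itself only quotes this result): adjunction between the coend functor and $\pi_*$, \'etale-local reduction to the model $[\Spec\bigl(A[u_i\,;\,i\in I]/(u_i^{r_i}-x_i)\bigr)/\mu_{\mathbf r}]$ with bundles on $\mathfrak X$ seen as graded modules, and verification of unit and counit on rank-one generators into which both sides Zariski-locally split. The one place your sketch is thinner than the actual argument is the parabolic-side splitting for several crossing divisors: the simultaneous splitting of the multi-filtration into parabolic line bundles is a genuine lemma of \cite{bor:rep} resting on the cartesian (axes) property, not merely the fact that ``a filtration of a free module over a local ring splits''; with that lemma supplied, your plan is correct.
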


\section{Connections on Deligne-Mumford stacks}%
\label{sec:connections_on_dm_stacks}

\subsection{Holomorphic connections}%
\label{sub:holomorphic_connections}

 The literature on the subject is 
 sparse, even if the notion is widely used, especially in the geometric Langlands program. Our main reference is \cite[Chapter 2]{ols:cry} where the -simplest- point of view of sheaves on the small étale site is used.

 \subsubsection{Definition on schemes and internal operations}%
 \label{ssub:definition_on_schemes_and_internal_operations}

 Let us start by considering $k$-schemes $X$, $X'$ ... There are many equivalent definitions of a connection $\nabla$ on a vector bundle $\mathcal E$ on $X$, but we considering first the most frequent one, Koszul's definition: a connection is a $k$-linear morphism $\nabla:\mathcal E\to \mathcal E\otimes_{\mathcal O_X} \Omega^1_{X/k}$, satisfying Leibniz rule, that is $\nabla(fs)=f\nabla(s)+s\otimes df$ for $f\in \mathcal O_X$ and $s\in \mathcal E$.

The category of pairs $(\mathcal E,\nabla)$ is endowed with:
\begin{itemize}
	\item a tensor product given by $(\mathcal E,\nabla)\otimes (\mathcal E',\nabla')= 
(\mathcal E\otimes_{\mathcal O_X}\mathcal E',\nabla\otimes\nabla' )$ where $(\nabla\otimes \nabla')(s\otimes s')=\nabla s\otimes s'+s\otimes \nabla' s'$,

 \item an internal Hom defined by $\HOM((\mathcal E,\nabla),(\mathcal E',\nabla'))=(\HOM(\mathcal E,\mathcal E'), \nabla^{\HOM})$ where $\nabla^{\HOM}$ verifies  $\nabla'(\phi(s))=(\phi\otimes\id)(\nabla(s))+\nabla^{\HOM}(\phi)(s)$.

\end{itemize}
	 In particular, one can define the dual of $(\mathcal E,\nabla)$ as 
$(\mathcal E,\nabla)^{\vee}=\HOM((\mathcal E,\nabla),(\mathcal O_X,d))$.

\subsubsection{Functoriality of connections}%
\label{ssub:functoriality_of_connections}

The following result is well-known, but due to lack of a proper reference, we sketch a proof.

\begin{lemma}
\label{lem:pull_back_connection}
Let $f:X'\to X$  be a morphism of $k$-schemes, $\mathcal E$ a vector bundle on $X$ endowed with a connection $\nabla$. There exists a unique connection $f^*\nabla$ on $f^*\mathcal E$ such that $f^*\nabla(f^* s)=f^*(\nabla(s))$ for all $s\in \mathcal E$.
\end{lemma}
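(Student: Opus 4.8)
The plan is to construct $f^*\nabla$ by an explicit formula on $f^*\mathcal E=\mathcal O_{X'}\otimes_{f^{-1}\mathcal O_X}f^{-1}\mathcal E$ and then settle the existence and uniqueness claims separately. First I would make precise the meaning of the defining identity $f^*\nabla(f^*s)=f^*(\nabla s)$: the right-hand side must be pushed through the canonical morphism $c:f^*\Omega^1_{X/k}\to\Omega^1_{X'/k}$ coming from functoriality of the sheaf of differentials, so that it lands in $f^*\mathcal E\otimes_{\mathcal O_{X'}}\Omega^1_{X'/k}$. The single property of $c$ on which the whole argument rests is its compatibility with the universal derivations, namely $c(1\otimes f^{-1}(dh))=d(f^{\#}h)$ for $h\in\mathcal O_X$ (equivalently $c\circ f^{-1}(d)=d\circ f^{\#}$). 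Writing $\widetilde\nabla(s):=(\mathrm{id}\otimes c)\bigl(1\otimes f^{-1}\nabla(s)\bigr)$ for a local section $s$ of $f^{-1}\mathcal E$, I would set
\[
f^*\nabla(g\otimes s)=s\otimes dg + g\,\widetilde\nabla(s),\qquad g\in\mathcal O_{X'},
\]
and extend additively.

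The main obstacle is the well-definedness of this formula, i.e.\ its compatibility with the relation $g\,f^{\#}h\otimes s=g\otimes(h\cdot s)$ defining the tensor product over $f^{-1}\mathcal O_X$. This is the point where everything must fit together: Leibniz for $\nabla$ produces, after applying $\mathrm{id}\otimes c$, a term $s\otimes c(1\otimes f^{-1}dh)$, which by the compatibility above equals $s\otimes d(f^{\#}h)$, and this matches exactly the term coming from $d(g\,f^{\#}h)=f^{\#}h\,dg+g\,d(f^{\#}h)$ via the product rule for $d$ on $\mathcal O_{X'}$, so the two evaluations of the formula coincide. Once well-definedness is granted, additivity and $k$-linearity are immediate, and the Leibniz rule for $f^*\nabla$ follows from the product rule for $d$ together with the $\mathcal O_{X'}$-linearity of $\mathrm{id}\otimes c$; evaluating the formula on $f^*s=1\otimes s$ gives $\widetilde\nabla(s)$ since $d1=0$, which is precisely the required identity.

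For uniqueness I would argue that if $\nabla_1,\nabla_2$ are two connections on $f^*\mathcal E$ satisfying the identity, their difference $\nabla_1-\nabla_2$ is $\mathcal O_{X'}$-linear, since the non-linear Leibniz terms cancel; it is therefore a morphism of $\mathcal O_{X'}$-modules $f^*\mathcal E\to f^*\mathcal E\otimes_{\mathcal O_{X'}}\Omega^1_{X'/k}$ vanishing on every pullback section $f^*s$. As the sections $f^*s=1\otimes s$ generate $f^*\mathcal E$ as an $\mathcal O_{X'}$-module by the very construction of the tensor product, this difference is zero. I would emphasize that this frame-free formulation of both existence and uniqueness is the preferable viewpoint here, in view of the intended generalization to Deligne-Mumford stacks, where no Zariski-local trivializations of connections are available.
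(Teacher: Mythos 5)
Your proof is correct, but the existence half takes a genuinely different route from the paper. The paper argues locally: uniqueness follows from the Leibniz rule because the sections $f^*s$ generate $f^*\mathcal E$ locally, and existence is then reduced to the Zariski-local case, where $\nabla$ is given by a matrix of differential forms and $f^*\nabla$ is defined by pulling back each entry individually (the local constructions glue, implicitly, by the uniqueness already established). You instead build $f^*\nabla$ globally and frame-freely by the formula $f^*\nabla(g\otimes s)=s\otimes dg+g\,\widetilde\nabla(s)$ on $\mathcal O_{X'}\otimes_{f^{-1}\mathcal O_X}f^{-1}\mathcal E$, so that the entire content is concentrated in the balancedness check against the relation $g\,f^{\#}h\otimes s=g\otimes(h\cdot s)$, which your computation settles correctly using $c\circ f^{-1}(d)=d\circ f^{\#}$ together with Leibniz for $\nabla$; your uniqueness argument is the same as the paper's. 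What each approach buys: the paper's is shorter to state but depends on choosing local trivializations and leaves the gluing step to the reader, while yours avoids all choices at the cost of the well-definedness computation — though you should add the routine remark that your formula is first defined on the presheaf tensor product and extends to $f^*\mathcal E$ by the universal property of sheafification, since a general section of $f^*\mathcal E$ is only locally a sum of elementary tensors (this is also the precise sense in which the pullback sections generate, as your uniqueness step needs). Both arguments transfer to the étale-local setting used later for Deligne--Mumford stacks, so your closing remark is apt, though the paper's local proof works there equally well since trivializations exist étale-locally.
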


\begin{proof}
	The uniqueness follows from Leibniz rule, as the sections $f^*s$ generate $f^*\mathcal E$ locally. To show the existence, it is thus enough to show it Zariski locally. But then $\nabla$ is given by a matrix of differential forms, and one defines $f^*\nabla$ thanks to the matrix obtained by pulling-back each form individually.
\end{proof}

\subsubsection{Definitions of holomorphic connection on a Deligne-Mumford stack}%
\label{ssub:definitions_of_holomorphic_connection_on_a_deligne_mumford_stack}

As for quasi-coherent sheaves 
, one can use different  - but equivalent - points of view to define an holomorphic connection on a Deligne-Mumford stack $\mathfrak X$ defined over a field $k$.

\begin{enumerate}
	\item It is quite natural to use the (small) étale site $\Et(\mathfrak X)$. This is Martin Olsson's point of view in \cite[2.2.19-23]{ols:cry}. On this site, the sheaf $\Omega_{\mathfrak X/k}^1$ is defined by $\Omega_{\mathfrak X/k}^1(T\to \mathfrak X) = \Gamma(T, \Omega_{T/k}^1)$, and there is a canonical derivation $d:\mathcal O_{\mathfrak X}\to \Omega_{\mathfrak X/k}^1$. Then the definition is the usual one: a connection $(\mathcal E,\nabla)$ is a locally free sheaf on $\Et(\mathfrak X)$ endowed with a $k$-linear morphism $\nabla:\mathcal E\to \mathcal E\otimes_{\mathcal O_{\mathfrak X}} \Omega^1_{\mathfrak{X}/k}$, satisfying Leibniz rule.

		We will rather use the close, and equivalent, point of view of giving the following data: 

		\begin{itemize}
			\item for each étale morphism from a scheme $t:T\rightarrow \mathfrak X$ of a locally free sheaf endowed with a connection $(\mathcal E_{(T,t)}, \nabla_{(T,t)})$ on $T$,
			\item and for each $2$-morphism $(f,f^b):(T',t')\to (T,t)$:
\begin{center}
	\begin{tikzcd}
                                     & T \arrow[rd, "t"] \arrow[d, "f^b"', Rightarrow] &             \\
T' \arrow[rr, "t'"'] \arrow[ru, "f"] & {}                                              & \mathfrak X
\end{tikzcd}
\end{center}
of an isomorphism $\rho_{(f,f^b)}:f^* \mathcal E_{(T,t)} \to \mathcal E_{(T',t')}$, compatible with the connections,
this data verifying the usual cocycle condition. 
\end{itemize}

	\item One can also use the point of view of groupoids: let $U\to \mathfrak X$  be étale chart, and  $(s,t):R\rightrightarrows U$ the corresponding groupoid.

		A connection on this groupoid consists of triple $(\mathcal F, \alpha,\nabla)$ where

		\begin{itemize}
			\item $\mathcal F$ is a vector bundle endowed with a connection $\nabla$ on $U$,
			\item $\alpha$ is an isomorphism $\alpha:t^*\mathcal F \simeq s^*\mathcal F$, compatible with $t^*\nabla$ and $s^*\nabla$,
		\end{itemize}
		again submitted to the usual cocycle condition.

\end{enumerate}

\subsection{Atiyah's exact sequence}%
\label{sub:atiyah_s_exact_sequence}

As is well known, connections can also be described within the $\mathcal O_X$-linear world. This is a precious point of view as it will considerably simplify some proofs. We recall very briefly the definition of Atiyah's extension following the exposition in \cite[\S1]{bost:atiyah}.

We start with the case of schemes, working over an arbitrary basis $S$.

Let $X$ be a separated $S$-scheme. Obviously the notion of a connection over $S$ still makes sense. Let $X^{(1)}$ be the first infinitesimal neighborhood  of the diagonal $\Delta : X \rightarrow X\times_S X$, let $i: X\to X^{(1)}$ be the canonical closed immersion, and for $j\in \{1,2\}$, let $q_j:X^{(1)}\rightarrow X $ be the composition of $X^{(1)} \rightarrow X\times_S X$ with $\pr_j:X\times_S X\rightarrow X$.

If $\mathcal I$ is the ideal defined by $\Delta$, then we identify $ \frac{\mathcal I }{\mathcal I^2}$ with $i_*\Omega ^1_{X/S}$, and the canonical  connection $d: \mathcal O_X\rightarrow  \Omega ^1_{X/S}$ with $i_* \mathcal O_X \rightarrow \frac{\mathcal I }{\mathcal I^2}$ given by $f\mapsto q_2^*f-q_1^*f$.

Let $\mathcal E$ be a vector bundle on $X$. By tensoring the canonical exact sequence $$0\rightarrow i_*\Omega ^1_{X/S} \to \frac{\mathcal O_{X\times_S X}}{\mathcal I^2} \rightarrow i_* \mathcal O_X  \rightarrow 0$$ with $q_2^*\mathcal E$ and then applying ${q_1}_*$, we get a canonical ($\mathcal O_X$-linear) exact sequence of sheaves:

\[ 0\rightarrow \Omega ^1_{X/S}\otimes_{\mathcal O_X}\mathcal E  \rightarrow P^1_{X/S}(\mathcal E) \rightarrow \mathcal E \rightarrow 0 \]
where $P^1_{X/S}(\mathcal E)={q_1}_*q_2^*\mathcal E$ is the sheaf of principal parts of $\mathcal E$. This sequence is known as \emph{Atiyah's extension} of $\mathcal E$.

Since $i:X\rightarrow X^{(1)}$ is an homeomorphism, it follows that  $P^1_{X/S}(\mathcal E)={q_2}_*q_2^*\mathcal E$ as sheaves of $\mathcal O_S$-modules, thus $q_2^\#:\mathcal E\to {q_2}_*q_2^*\mathcal E$ gives an $\mathcal O_S$-linear splitting of Atiyah's extension. Now if $\mathcal \alpha: \mathcal E \to P^1_{X/S}(\mathcal E)$ is another $\mathcal O_S$-linear splitting, then $\alpha$ is $\mathcal O_X$-linear if and only if $\nabla=\alpha-q_2^\#: \mathcal E \rightarrow \Omega ^1_{X/S}\otimes_{\mathcal O_X}\mathcal E$ verifies Koszul's condition. One concludes that the connections on $\mathcal E$ are in one to one correspondence with the $\mathcal O_X$-linear splittings of Atiyah's extension. With some additional care, one shows that this holds for an arbitrary $S$-scheme, see \cite[Proposition 2.9]{bert-ogus:notes}.

It is clear that the formation of  $P^1_{X/S}(\mathcal E)$ commutes with an étale base change $X'\to X$. As a consequence, we can define Atiyah's extension for a vector bundle $\mathcal E$ on a Deligne-Mumford stack $\mathfrak X/S$, and the correspondence above still holds. However, we will rather need the logarithmic version, that we construct directly (\S \ref{ssub:logarithmic_connexions_on_deligne_mumford_stacks}).

\subsection{Logarithmic connections}%
\label{sub:logarithmic_connections}

In this section, we use the notations of the log-smooth context (\S \ref{ssub:logarithmic_context}).

\subsubsection{Logarithmic differentials}%
\label{ssub:logarithmic_differentials}

We start by revisiting the notion of logarithmic differentials on schemes. The classical and most intuitive way of defining $\Omega^1_{X/k}(\log(D))$ consists of viewing it as the subsheaf of  $\Omega^1_{X/k}(\star D)$ locally generated by the forms that are logarithmic along $D$ (see for instance \cite[Definition 2.1]{E-W:book}). 

However, the local nature of this definition makes it a bit cumbersome to manipulate when one has to deal with algebraic stacks. Instead, we use Martin Olsson's insight that logarithmic differentials on a stack $\mathfrak X\to [\mathbb A^1/\mathbb G^m]$ can be defined as $\Omega^1_{\mathfrak X/[\mathbb A^1/\mathbb G^m]}$. As $[\mathbb A^1/\mathbb G^m]$ is not a Deligne-Mumford stack, using this directly as a definition would imply the use of the lisse-étale (or fppf) site, that we want to avoid. For representable morphisms $\mathfrak X\to [\mathbb A^1/\mathbb G^m]$ though, it is easy to spell out the meaning of Olsson's definition in a chart, and this point of view gives a global definition of logarithmic differentials that seems very useful even for schemes. Remember that we have defined the $\mathbb{G}_m^I$-torsor $p_D:T_D\rightarrow X$ in \S \ref{ssub:canonical_flat_presentation}.

\begin{definition}[Martin Olsson]
\label{def:log_diff_Ollson}
The sheaf of logarithmic differentials is defined as  
$\Omega^1_{X/k}(\log(D))={p_D}_*^{\mathbb G_m^I} \Omega^1_{T_D/\mathbb A^I}$.
\end{definition}

Using this definition, one proves easily the following classical fact:
\begin{proposition}
\label{prop:exact_sequence_residues}
There  is a natural exact sequence:
\[ 0\rightarrow \Omega^1_{X/k}\rightarrow \Omega^1_{X/k}(\log(D))\xrightarrow{\res} \oplus_{i\in I}{i_{D_i}}_*\mathcal O_{D_i}\rightarrow 0 \] 
where for each $i\in I$, $i_{D_i}$ stands for the closed immersion of $D_i$ in $X$.
\end{proposition}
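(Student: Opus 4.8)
The plan is to reduce everything to an explicit computation on the canonical flat presentation $p_D : T_D \to X$ of \S\ref{ssub:canonical_flat_presentation} and then descend along the $\mathbb{G}_m^I$-torsor. By Definition \ref{def:log_diff_Ollson} the sheaf $\Omega^1_{X/k}(\log(D))$ is the $\mathbb{G}_m^I$-invariant pushforward of $\Omega^1_{T_D/\mathbb{A}^I}$, and since invariant pushforward along a torsor is exact (it is the equivalence between $\mathbb{G}_m^I$-equivariant sheaves on $T_D$ and sheaves on $X=[T_D/\mathbb{G}_m^I]$), it suffices to produce on $T_D$, $\mathbb{G}_m^I$-equivariantly, a short exact sequence whose invariants give the asserted one. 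The question being Zariski local on $X$, I would invoke Proposition \ref{prop:sncd_etale_coordinates} to choose étale coordinates $(x_1,\dots,x_n)$ with $D$ cut out by $x_1\cdots x_m$; then $T_D \cong X\times\mathbb{G}_m^I$ with torsor coordinates $t_i$, and the morphism $a_D$ is given by $a_D^\# X_i = x_i t_i$ (the canonical section of $\mathcal{O}_X(D_i)$ being $f_i t_i$ with $f_i=x_i$ a local equation).

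Next I would write down the two relative cotangent sequences on $T_D$. The torsor $p_D$ being smooth gives a short exact sequence $0 \to p_D^*\Omega^1_{X/k} \to \Omega^1_{T_D/k}\to \Omega^1_{T_D/X}\to 0$, which identifies $P := p_D^*\Omega^1_{X/k}$ with the horizontal forms and exhibits $\Omega^1_{T_D/X}$ as free on the invariant forms $\omega_i = dt_i/t_i$. On the other hand $\Omega^1_{T_D/\mathbb{A}^I}$ is the quotient of $\Omega^1_{T_D/k}$ by the submodule $Q$ generated by the $d(a_D^\# X_i) = t_i\,dx_i + x_i\,dt_i$. The two elementary computations are then: (i) $P\cap Q = 0$ inside $\Omega^1_{T_D/k}$, since the $dt_i$-component of a relation in $Q$ lying in $P$ forces its coefficient times the nonzerodivisor $x_i$ to vanish; and (ii) modulo $P$ one has $t_i\,dx_i+x_i\,dt_i \equiv x_i\,dt_i$, so $P+Q = P\oplus\bigoplus_i x_i\mathcal{O}_{T_D}\,dt_i$. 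Consequently the composite $P \to \Omega^1_{T_D/k}\to \Omega^1_{T_D/\mathbb{A}^I}$ is injective with cokernel $\Omega^1_{T_D/k}/(P+Q)\cong \bigoplus_i(\mathcal{O}_{T_D}/x_i\mathcal{O}_{T_D})\,dt_i$.

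Taking $\mathbb{G}_m^I$-invariants, $P$ descends to $\Omega^1_{X/k}$, the term $\Omega^1_{T_D/\mathbb{A}^I}$ descends to $\Omega^1_{X/k}(\log(D))$ by definition, and the weight-$0$ part of $(\mathcal{O}_{T_D}/x_i)\,dt_i$ is $(\mathcal{O}_X/x_i)\,\omega_i \cong {i_{D_i}}_*\mathcal{O}_{D_i}$, yielding the desired sequence with last map $\omega_i \mapsto 1$, i.e. the residue. The main obstacle, and the point needing care, is intrinsicality and gluing: the local sequence is built from a choice of coordinates, so I must check that the inclusion $\Omega^1_{X/k}\hookrightarrow \Omega^1_{X/k}(\log(D))$ is the canonical one (it is, being induced by the coordinate-free map $p_D^*\Omega^1_{X/k}\to\Omega^1_{T_D/k}\to\Omega^1_{T_D/\mathbb{A}^I}$) and that the residue does not depend on the local equation $f_i$ of $D_i$. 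The latter follows from $d(uf_i)/(uf_i) = df_i/f_i + du/u$ with $du/u$ holomorphic for a unit $u$, so the class of $df_i/f_i$ in the cokernel, hence the identification with $\mathcal{O}_{D_i}$, is canonical, and the compatible local sequences glue to the global one. A secondary subtlety worth flagging is that $a_D$ is not smooth along $D$, so the exactness used for $\Omega^1_{T_D/\mathbb{A}^I}$ must be read off the explicit description of $Q$ rather than from a smoothness statement.
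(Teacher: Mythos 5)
Your proof is correct and is essentially the argument the paper intends: the paper gives no written proof (it merely asserts the fact follows ``easily'' from Definition \ref{def:log_diff_Ollson}), and the intended route is precisely your computation of $\Omega^1_{T_D/\mathbb A^I}$ as $\Omega^1_{T_D/k}$ modulo the relations $d(x_it_i)=t_i\,dx_i+x_i\,dt_i$, followed by descent along the exact invariant pushforward ${p_D}_*^{\mathbb G_m^I}$ --- the same mechanism the paper uses explicitly for Lemma \ref{lem:logarithmic_atiyah_exact_sequence}. One cosmetic remark: since $d(x_it_i)=0$ forces $\omega_i=dt_i/t_i=-dx_i/x_i$ in $\Omega^1_{T_D/\mathbb A^I}$, your normalization $\omega_i\mapsto 1$ is the classical residue $dx_i/x_i\mapsto 1$ only up to a sign (which flips with the convention chosen for $\mathbb V(\mathcal O_X(D_i))$ and the torsor weights), and this affects neither exactness nor naturality.
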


Let us now describe how logarithmic differentials behave functorially. As we don't need the full power of logarithmic geometry, we will work with an ad hoc notion of log-scheme.

\begin{itemize}
	\item A log-scheme is a couple $(X,(D_i)_{i\in I})$ where $X$ is a smooth $k$-scheme and $(D_i)_{i\in I}$ is a finite family of distinct effective integral Cartier divisors such that the divisor $D=\cup_{i\in I}D_i$ is a sncd.
	\item A morphism between two such log-schemes $(X',(D'_i)_{i\in I})$ and $(X,(D_i)_{i\in I})$, indexed by the same finite set $I$, is a couple $(f, (r_i))_{i\in I})$ consisting of a flat morphism $f: X'\to X$ and a family of non-negative integers $(r_i)_{i\in I}$ such that $f^* D_i=r_iD'_i$ for each $i\in I$.
\end{itemize}  

As the second part of the data of a morphism is redundant, we will frequently omit it. It is clear from the definitions that to such a morphism is associated a canonical morphism $f^*\Omega^1_{X/k}(\log(D))\to \Omega^1_{X'/k}(\log(D'))$.

\begin{definition}[]
\label{def:log-etale}
A morphism $(f, (r_i))_{i\in I}): (X',(D'_i)_{i\in I})\to(X,(D_i)_{i\in I})) $ is log-étale if 
\begin{enumerate}
	\item the integer $r_i$ is invertible in $k$ for each $i\in I$,
	\item the associated morphism $X'\to \sqrt[\mathbf{r}]{{\mathbf{D}/X}}$ is étale.
\end{enumerate}

\end{definition}

\begin{lemma}
\label{lem:Kummer}
Let $(f, (r_i))_{i\in I}): (X',(D'_i)_{i\in I})\to(X,(D_i)_{i\in I}))$ be log-étale morphism. Then the canonical morphism $f^*\Omega^1_{X/k}(\log(D))\to \Omega^1_{X'/k}(\log(D'))$ is an isomorphism.
\end{lemma}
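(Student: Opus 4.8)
The plan is to factor $f$ through the stack of roots and compute both sides from Olsson's formula (Definition \ref{def:log_diff_Ollson}), exploiting that relative differentials over $\mathbb A^I$ are insensitive to the ramified Kummer covering that produces the root stack.

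First, by Definition \ref{def:log-etale} I would factor $f$ as $X'\xrightarrow{g}\mathfrak X\xrightarrow{\pi}X$ with $\mathfrak X=\sqrt[\mathbf{r}]{\mathbf{D}/X}$ and $g$ étale; note that $g^*\mathfrak D_i=D'_i$, since $f^*D_i=r_iD'_i$ while $f^*D_i=g^*\pi^*D_i=r_i\,g^*\mathfrak D_i$. I would extend Olsson's definition to $\mathfrak X$ by setting $\Omega^1_{\mathfrak X/k}(\log\mathfrak D)={p_{\mathfrak D}}_*^{\mathbb G_m^I}\Omega^1_{T_{\mathfrak D}/\mathbb A^I}$, using the torsor $p_{\mathfrak D}:T_{\mathfrak D}\to\mathfrak X$ of \S\ref{ssub:canonical_flat_presentation}, which presents $\mathfrak X=[T_{\mathfrak D}/\mathbb G_m^I]$ (and likewise $X=[T_D/\mathbb G_m^I]$, $X'=[T_{D'}/\mathbb G_m^I]$).

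Then I would record the two relevant morphisms of torsors. From $T_{\mathfrak D}=T_D\times_{\mathbb A^I}\mathbb A^I$, the fibre product taken along $a_D$ and the power map $m:\mathbb A^I\to\mathbb A^I$, $(t_i)\mapsto(t_i^{r_i})$, I obtain a projection $\tilde a:T_{\mathfrak D}\to T_D$ that is equivariant for the isogeny $m:\mathbb G_m^I\to\mathbb G_m^I$, $(\lambda_i)\mapsto(\lambda_i^{r_i})$, and induces $\pi$ on quotients. From $g^*\mathfrak D_i=D'_i$ I get a cartesian square of torsors $T_{D'}=X'\times_{\mathfrak X}T_{\mathfrak D}$, with $h:T_{D'}\to T_{\mathfrak D}$ étale and $\mathbb G_m^I$-equivariant. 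I would then establish two identifications of relative differentials: compatibility of Kähler differentials with base change gives $\Omega^1_{T_{\mathfrak D}/\mathbb A^I}\cong\tilde a^*\Omega^1_{T_D/\mathbb A^I}$ (as $T_{\mathfrak D}$ is the base change of $T_D$ along $m$), while étaleness of $h$ over $\mathbb A^I$ gives $\Omega^1_{T_{D'}/\mathbb A^I}\cong h^*\Omega^1_{T_{\mathfrak D}/\mathbb A^I}$; composing, $\Omega^1_{T_{D'}/\mathbb A^I}\cong(\tilde a h)^*\Omega^1_{T_D/\mathbb A^I}$, canonically and hence $\mathbb G_m^I$-equivariantly. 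The conceptual point is that although $m$ ramifies along the coordinate hyperplanes, taking differentials \emph{relative to the target} $\mathbb A^I$ kills the terms $dt_i$, so no ramification contribution survives; this is exactly where the log-étale hypothesis enters.

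Finally I would pass to quotients, using that pullback along a map of quotient stacks induced by a $\phi$-equivariant morphism of torsors equals the corresponding equivariant pullback. Applying this to $\tilde a h:T_{D'}\to T_D$ (equivariant along $m$, inducing $f$) I would conclude
$$f^*\Omega^1_{X/k}(\log D)=f^*{p_D}_*^{\mathbb G_m^I}\Omega^1_{T_D/\mathbb A^I}={p_{D'}}_*^{\mathbb G_m^I}\big((\tilde a h)^*\Omega^1_{T_D/\mathbb A^I}\big)={p_{D'}}_*^{\mathbb G_m^I}\Omega^1_{T_{D'}/\mathbb A^I}=\Omega^1_{X'/k}(\log D'),$$
and then check that this composite coincides with the canonical morphism by naturality of all the identifications used. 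The main obstacle is the step involving $\pi$: the torsor square over $\pi$ is \emph{not} cartesian, since $\mathcal O(D_i)$ and $\mathcal O(\mathfrak D_i)$ differ by an $r_i$-th power, so naive flat base change does not apply; instead I must use the quotient-stack description of $\pi^*$ as equivariant pullback along the power-equivariant map $\tilde a$, combined with the differential-theoretic observation above.
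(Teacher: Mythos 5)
Your proposal is correct and is essentially the paper's own argument written out in full: the paper's one-line proof invokes exactly your two identifications, namely base-change compatibility of relative differentials applied to the cartesian square $T_{\mathfrak D}=T_D\times_{\mathbb A^I}\mathbb A^I$ over the power map, and unramifiedness of the étale morphism $T_{D'}\to T_{\mathfrak D}$, followed by $\mathbb G_m^I$-equivariant descent through Definition \ref{def:log_diff_Ollson}. The only difference is one of explicitness: you spell out the factorization through the root stack, the $m$-equivariance of $\tilde a$, and the descent step, all of which the paper leaves implicit.
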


\begin{proof}
This follows from the two well-known facts: formation of differentials commutes with arbitrary base change, and an étale morphism is unramified.
\end{proof}

Finally, we indicate briefly how to define logarithmic differentials in the stacky log-smooth context (\S \ref{ssub:stacky_context}). Let $\mathfrak{X}$ be  a $k$-smooth Deligne-Mumford stack  endowed with a finite family $( \mathfrak{D}_i)_{i\in I}$ of distinct effective integral Cartier divisors such that the divisor $\mathfrak{D}=\cup_{i\in I}\mathfrak{D}_i$ is a sncd.

Let  $(f,f^b):(T',t')\to (T,t)$ be a $2$-morphism between objects of the small étale site of $\mathfrak X$ as in \S 
\ref{ssub:definitions_of_holomorphic_connection_on_a_deligne_mumford_stack}. Then Lemma \ref{lem:Kummer} implies that there is a canonical isomorphism $f^*\Omega^1_{T/k}(\log(t^*\mathfrak D))\to \Omega^1_{T'/k}(\log(t'^*\mathfrak D'))$. The cocycle condition is verified, so this defines a sheaf $\Omega^1_{ \mathfrak{X}/k}(\log( \mathfrak{D}))$. 

However, for stack of roots, a more explicit approach is available. Namely more generally for stacks such that $T_{\mathfrak D}$ is a scheme, we can simply use Martin Olsson's definition (Definition \ref{def:log_diff_Ollson}) as it is.

There is a straightforward but useful definition of a log-stack (that is, a couple $(\mathfrak{X},( \mathfrak{D}_i)_{i\in I})$ as above) and log-étale morphism between log-stacks.  For instance, if $\mathfrak X= \sqrt[\mathbf{r}]{(\mathbf{D}/X}$ is a stack of roots and $( \mathfrak{D}_i)_{i\in I}$ is the family of roots, then $(\mathfrak{X},( \mathfrak{D}_i)_{i\in I})$ is a log-stack (see Remark \ref{rem:ncd-DM}) and the log-morphism $(\pi,(r_i)_{i\in I}): (\mathfrak{X},( \mathfrak{D}_i)_{i\in I}) \rightarrow (X,(D_i)_{i\in I})$ is tautologically log-étale.

\subsubsection{Logarithmic connections}%
\label{ssub:logarithmic_connections}

A \emph{logarithmic connection} $\nabla$ on a vector bundle $\mathcal E$ on $X$ is a $k$-linear morphism $\nabla:\mathcal E\to \mathcal E\otimes_{\mathcal O_X} \Omega^1_{X/k}(\log(D))$ satisfying Leibniz rule. The corresponding category is denoted by $\Con(X,D)$. 

There is a useful formula for the residue of a tensor product of two logarithmic connections \( (\mathcal{E},\nabla) \) and \( (\mathcal{E'},\nabla') \)  :

 \[ \res_{D_i}(\nabla \otimes \nabla') = \res_{D_i}(\nabla) \otimes \id_{ \mathcal{E'}_{|D_i}}  + \id_{ \mathcal{E}_{|D_i}}\otimes \res_{D_i}(\nabla') \] 

One fact of paramount importance for stating the forthcoming correspondence between \parabolic connections and holomorphic connections on the stack of roots (see \S \ref{sec:the_correspondence}) is the existence of a canonical logarithmic connection on the ideal sheaf $\mathcal I_{D_i}= \mathcal O_X(-D_i)$. Let us describe its construction in the terms of  \S \ref{ssub:logarithmic_differentials}. We first notice that the canonical holomorphic connection $d:\mathcal{O}_{T_D}\to \Omega^1_{T_D/\mathbb{A}^I}$ is $\mathbb{G}_m^I$-equivariant. Let $(a_D)_i$ be the $i$-th component of $a_D$, this is a global equation of the principal divisor $p_D^*D_i$. Since $d((a_D)_i)=0$ in $\Omega^1_{T_D/\mathbb{A}^I}$, it follows that $d((a_D)_i\mathcal{O}_{T_D})  \subset (a_D)_i\Omega^1_{T_D/\mathbb{A}^I}$. By applying the functor $(p_D)_*^{ \mathbb{G}_m^I}$ to the restriction $d:(a_D)_i\mathcal{O}_{T_D}  \rightarrow  (a_D)_i\Omega^1_{T_D/\mathbb{A}^I}$ we get a logarithmic connection 
$$d(-D_i): \mathcal O_X(-D_i) \rightarrow \mathcal O_X(-D_i)\otimes_{\mathcal O_X} \Omega^1_{X/k}(\log(D))\; .$$

\begin{remark}
\label{rem:canonical connection}
By construction, the morphism \(    \mathcal O_X(-D_i) \rightarrow  \mathcal O_X \) is compatible with the connections $d(-D_i)$ and $d$, and this caracterizes $d(-D_i)$ uniquely.
\end{remark}

\begin{lemma}[{\cite[Lemma 2.7]{E-W:book}}]
\label{lem:canonical_connection}
Let $B=\sum_{i\in I}\mu_i D_i$ be a Cartier divisor with support in $D$. There exists a canonical logarithmic connection: 
$$d(B): \mathcal O_X(B) \rightarrow \mathcal O_X(B)\otimes_{\mathcal O_X} \Omega^1_{X/k}(\log(D))$$
characterized by $d(B)(\prod_{i\in I}x_i^{-\mu_i})=-\prod_{i\in I}x_i^{-\mu_i}\cdot\sum_{i\in I}\mu_i \frac{dx_i}{x_i} $, where $x_i$ is a local equation of $D_i$. In particular, $\res_{D_i} (d(B))=-\mu_i  \id$.

\end{lemma}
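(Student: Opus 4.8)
The plan is to imitate, for a general $B=\sum_{i\in I}\mu_i D_i$, the construction of $d(-D_i)$ carried out just above the statement, working on the canonical $\mathbb{G}_m^I$-torsor $p_D:T_D\to X$. First I observe that $\prod_{i\in I}(a_D)_i^{-\mu_i}$ is a $\mathbb{G}_m^I$-semi-invariant generator of the fractional ideal $p_D^*\mathcal O_X(B)$, and that since each $(a_D)_i$ is pulled back from $\mathbb{A}^I$ one has $d((a_D)_i)=0$ in $\Omega^1_{T_D/\mathbb A^I}$. By the Leibniz rule this forces $d(\prod_i (a_D)_i^{-\mu_i}\,g)=\prod_i (a_D)_i^{-\mu_i}\,dg$ for $g\in\mathcal O_{T_D}$, so the canonical holomorphic connection $d:\mathcal O_{T_D}\to\Omega^1_{T_D/\mathbb A^I}$ preserves $p_D^*\mathcal O_X(B)$. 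As $d$ is $\mathbb G_m^I$-equivariant and the generator is semi-invariant, this connection is equivariant, and applying $(p_D)_*^{\mathbb G_m^I}$ — together with the descent identity $(p_D)_*^{\mathbb G_m^I}p_D^*\mathcal O_X(B)=\mathcal O_X(B)$ and Definition \ref{def:log_diff_Ollson} — produces the desired logarithmic connection $d(B)$ on $\mathcal O_X(B)$. This construction is manifestly independent of local equations, which yields canonicity for free; uniqueness then follows exactly as in Remark \ref{rem:canonical connection}, the connection being determined on a single generator by Leibniz.

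To extract the explicit formula and the residue I would compute in an étale chart with $D_i=\princ(x_i)$, choosing a local trivialization $T_D\simeq X\times\mathbb G_m^I$ with coordinates $t_i$ so that $(a_D)_i=t_i\,x_i$. There the vanishing $d((a_D)_i)=0$ reads $\frac{dt_i}{t_i}=-\frac{dx_i}{x_i}$ in $\Omega^1_{T_D/\mathbb A^I}$, while the local generator $\prod_i x_i^{-\mu_i}$ of $\mathcal O_X(B)$ pulls back to $\bigl(\prod_i t_i^{\mu_i}\bigr)\prod_i(a_D)_i^{-\mu_i}$. Applying the displayed formula for $d$ gives $d\bigl(p_D^*\prod_i x_i^{-\mu_i}\bigr)=p_D^*\bigl(\prod_i x_i^{-\mu_i}\bigr)\sum_i\mu_i\frac{dt_i}{t_i}$; substituting $\frac{dt_i}{t_i}=-\frac{dx_i}{x_i}$ and descending yields exactly $d(B)(\prod_i x_i^{-\mu_i})=-\prod_i x_i^{-\mu_i}\sum_i\mu_i\frac{dx_i}{x_i}$. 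The residue is then immediate from Proposition \ref{prop:exact_sequence_residues}, since $\res_{D_i}(\frac{dx_j}{x_j})=\delta_{ij}$ gives $\res_{D_i}(d(B))=-\mu_i\,\id$.

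Equivalently, and more in keeping with the internal operations of \S\ref{ssub:definition_on_schemes_and_internal_operations}, one can set $d(B)=\bigotimes_{i\in I}\bigl(d(D_i)\bigr)^{\otimes\mu_i}$, where $d(D_i):=d(-D_i)^\vee$ and negative powers are read as tensor powers of the dual, using $\mathcal O_X(B)=\bigotimes_i\mathcal O_X(D_i)^{\otimes\mu_i}$; since $\res_{D_i}(d(-D_j))=\delta_{ij}\,\id$ and passing to the dual negates the residue, the tensor residue formula recalled above gives $\res_{D_i}(d(B))=\sum_j\mu_j(-\delta_{ij})\,\id=-\mu_i\,\id$ by additivity. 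The one delicate point in either route is matching $\Omega^1_{T_D/\mathbb A^I}$ with the logarithmic generators on $X$: the term $-\sum_i\mu_i\frac{dx_i}{x_i}$ does \emph{not} come from differentiating the $d$-horizontal global generator $\prod_i(a_D)_i^{-\mu_i}$, but precisely from the semi-invariant factor $\prod_i t_i^{\mu_i}$ relating it to the local generator on $X$, combined with the sign in $\frac{dt_i}{t_i}=-\frac{dx_i}{x_i}$. Getting this bookkeeping right, rather than any genuine difficulty, is the main obstacle.
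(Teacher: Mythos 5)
Your proposal is correct, and in fact it contains two independent arguments: your primary route (a direct construction of $d(B)$ on the torsor $T_D$ for arbitrary $B$) differs from the paper's proof, while your ``equivalently'' paragraph \emph{is} the paper's proof. The paper disposes of the lemma in two lines: the case $B=-D_i$ is the torsor construction already carried out in the text, and the general case follows formally by taking duals and tensor products of connections, with the residue then read off from the tensor-residue formula recalled in \S\ref{ssub:logarithmic_connections} --- exactly your $d(B)=\bigotimes_i\bigl(d(-D_i)^\vee\bigr)^{\otimes\mu_i}$ computation. Your main route instead generalizes the torsor construction wholesale: observing that $\prod_i(a_D)_i^{-\mu_i}$ is a $d$-horizontal semi-invariant generator of the fractional ideal $p_D^*\mathcal O_X(B)$ and descending by $(p_D)_*^{\mathbb G_m^I}$. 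This buys a manifestly canonical, equation-free definition and produces the explicit local formula and the residue in one computation, at the cost of the trivialization bookkeeping you rightly flag (the descended form arises from the semi-invariant factor $\prod_i t_i^{\mu_i}$, not from the horizontal generator); the paper's route buys brevity by leveraging operations already set up. One small wrinkle in your chart computation: the identity $\frac{dt_i}{t_i}=-\frac{dx_i}{x_i}$ in $\Omega^1_{T_D/\mathbb A^I}$ literally holds only after inverting $x_i$ (the honest relation is $x_i\,dt_i=-t_i\,dx_i$), but since $\Omega^1_{X/k}(\log D)$ is locally free and the formula need only be checked on the dense open complement of $D$ (or at generic points of the $D_i$ for the residue), this is harmless.
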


\begin{proof}
	When $B=-D_i$, this is a consequence of the discussion above. The general case follows by using the dual of a connection and the tensor product of two connections.
\end{proof}

In particular, using again the tensor product of two connections, we can twist an arbitrary logarithmic connection $(\mathcal E,\nabla)$ by a divisor $B=\sum_{i\in I}\mu_i D_i$, the result will be denoted by $(\mathcal E(B),\nabla(B))$. From \cite[Lemma 2.7]{E-W:book}, we borrow the following formula, which we will use extensively:
$$\res_{D_i} (\nabla(B))= \left(\res_{D_i} (\nabla)\right)(B)-\mu_i  \id$$
and which follows from the formula giving the residue of a tensor product of logarithmic connections.

\begin{remark}
\label{rem:push-forward-logarithmic-connection}
Let $f: (X',(D'_i)_{i\in I})\to(X,(D_i)_{i\in I})) $ be a morphism of log-schemes. The pull-back $f^*:\Con(X,D) \rightarrow \Con(X',D')$ is well defined (this is analogous to Lemma \ref{lem:pull_back_connection}). If $f$ is finite and log-étale then the push-forward $f_*:\Con(X',D') \rightarrow \Con(X,D)$ is also well defined (thanks to projection formula and Lemma \ref{lem:Kummer}).

\end{remark}

\subsubsection{Logarithmic Atiyah exact sequence}%
\label{ssub:logarithmic_atiyah_exact_sequence}

In this section, we denote by $\mathcal E$ a vector bundle on $X$.

\begin{definition}[]
\label{def:logarithmic_principal_parts}
The sheaf of \emph{logarithmic principal parts} (with respect to $D$) is the sheaf:
\[ P^1_{(X,D)/k}( \mathcal{E}) := {p_D}_*^{\mathbb G_m^I} \left( P^1_{_{T_D/\mathbb A^I}}(p_D^*\mathcal E) \right)  \] 
\end{definition}

\begin{lemma}
\label{lem:logarithmic_atiyah_exact_sequence}
There is a natural exact sequence:
\[ 0\rightarrow \Omega ^1_{X/k}(\log D) \otimes_{\mathcal O_X}\mathcal E  \rightarrow P^1_{(X,D)/k}( \mathcal{E}) \rightarrow \mathcal E \rightarrow 0 \] 
\end{lemma}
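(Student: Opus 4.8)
The plan is to obtain the logarithmic Atiyah sequence by descending the ordinary (non-logarithmic) Atiyah sequence from the torsor $T_D$ via the exact, $\mathbb{G}_m^I$-equivariant pushforward functor ${p_D}_*^{\mathbb{G}_m^I}$, exactly as the logarithmic differentials and logarithmic principal parts were themselves defined in \S\ref{ssub:logarithmic_differentials} and Definition \ref{def:logarithmic_principal_parts}. First I would recall that on the scheme $T_D$, relative to the base $\mathbb{A}^I$, the construction of \S\ref{sub:atiyah_s_exact_sequence} applied to the vector bundle $p_D^*\mathcal E$ yields the ordinary Atiyah extension
\[
0\rightarrow \Omega^1_{T_D/\mathbb A^I}\otimes_{\mathcal O_{T_D}} p_D^*\mathcal E \rightarrow P^1_{T_D/\mathbb A^I}(p_D^*\mathcal E)\rightarrow p_D^*\mathcal E \rightarrow 0,
\]
which is a short exact sequence of $\mathbb{G}_m^I$-equivariant sheaves on $T_D$, since $p_D$ is $\mathbb{G}_m^I$-equivariant and the whole principal-parts construction is functorial for equivariant data.

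The second step is to apply ${p_D}_*^{\mathbb{G}_m^I}$ termwise. By Definition \ref{def:log_diff_Ollson} the pushforward of the left term is $\Omega^1_{X/k}(\log D)\otimes_{\mathcal O_X}\mathcal E$, where I use the projection formula together with the fact that $\mathcal E=(p_D)_*^{\mathbb{G}_m^I} p_D^*\mathcal E$ to identify the twisting bundle. By Definition \ref{def:logarithmic_principal_parts} the pushforward of the middle term is $P^1_{(X,D)/k}(\mathcal E)$ by definition, and the pushforward of the right term is again $\mathcal E$ by equivariant descent along the torsor. So the only thing that is not purely definitional is the \emph{exactness} of the resulting sequence on $X$.

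The main obstacle is therefore establishing that ${p_D}_*^{\mathbb{G}_m^I}$ preserves exactness here. The clean way is to observe that since $p_D:T_D\to X$ is a $\mathbb{G}_m^I$-torsor, it is faithfully flat, and the functor ${p_D}_*^{\mathbb{G}_m^I}$ is an equivalence between $\mathbb{G}_m^I$-equivariant quasi-coherent sheaves on $T_D$ and quasi-coherent sheaves on $X$ (with quasi-inverse $p_D^*$); in particular it is exact. Concretely, one descends along the faithfully flat $p_D$: a sequence on $X$ is exact if and only if its pullback to $T_D$ is, and the pullback of the candidate sequence is precisely the ordinary Atiyah sequence on $T_D$, which is exact by \S\ref{sub:atiyah_s_exact_sequence}. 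Hence exactness descends.

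Finally, I would note that all identifications are canonical and compatible with restriction along étale charts, so the sequence is natural in $\mathcal E$; this naturality is what will later allow the interpretation of logarithmic connections as $\mathcal O_X$-linear splittings, mirroring the splitting discussion of \S\ref{sub:atiyah_s_exact_sequence}. I expect the proof to be short, the substance being the faithful flatness of the torsor $p_D$ (so that exactness is insensitive to the equivariant pushforward) rather than any computation with principal parts.
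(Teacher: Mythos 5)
Your proposal is correct and follows essentially the same route as the paper: the paper's proof is precisely to apply the exact functor ${p_D}_*^{\mathbb G_m^I}$ to the standard Atiyah sequence of $p_D^*\mathcal E$ on $T_D$ relative to $\mathbb A^I$. Your additional justifications (exactness via descent along the faithfully flat torsor, identification of the outer terms via the projection formula and $\mathcal E\simeq {p_D}_*^{\mathbb G_m^I}p_D^*\mathcal E$) simply spell out the details the paper leaves implicit.
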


\begin{proof}
This is the image of the standard Atiyah sequence 
$$0 \rightarrow \Omega^1_{T_D/\mathbb A^I}\otimes p_D^*\mathcal E \rightarrow P^1_{_{T_D/\mathbb A^I}}(p_D^*\mathcal E) \rightarrow p_D^*\mathcal E \rightarrow 0$$ by the exact functor ${p_D}_*^{\mathbb G_m^I}$.

\end{proof}

\begin{lemma}
\label{lem:sections-logarithmic-Atiyah-extension}
There is a natural bijection between logarithmic connections on $\mathcal E$ and sections of the logarithmic Atiyah exact sequence.
\end{lemma}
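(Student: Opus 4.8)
The plan is to deduce everything from the non-logarithmic Atiyah correspondence recalled in \S\ref{sub:atiyah_s_exact_sequence}, applied on the scheme $T_D$ relative to the base $\mathbb{A}^I$, and then to descend along the $\mathbb{G}_m^I$-torsor $p_D:T_D\to X$. The three objects appearing in the logarithmic Atiyah sequence have all been \emph{defined} as ${p_D}_*^{\mathbb{G}_m^I}$ of their relative counterparts on $T_D$, so descent is the natural tool.

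First I would record that taking $\mathbb{G}_m^I$-invariants, that is the functor ${p_D}_*^{\mathbb{G}_m^I}$, is an exact equivalence between $\mathbb{G}_m^I$-equivariant quasi-coherent sheaves on $T_D$ and quasi-coherent sheaves on $X$ (this is torsor descent, $[T_D/\mathbb{G}_m^I]\simeq X$; exactness is the fact already used in the proof of Lemma \ref{lem:logarithmic_atiyah_exact_sequence}). By Definition \ref{def:log_diff_Ollson}, Definition \ref{def:logarithmic_principal_parts} and Lemma \ref{lem:logarithmic_atiyah_exact_sequence}, this equivalence carries the relative Atiyah sequence $0\to \Omega^1_{T_D/\mathbb{A}^I}\otimes p_D^*\mathcal E \to P^1_{T_D/\mathbb{A}^I}(p_D^*\mathcal E)\to p_D^*\mathcal E\to 0$ to the logarithmic Atiyah sequence. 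Being an exact $\mathcal O$-linear equivalence, it induces a bijection between $\mathcal O_X$-linear splittings of the logarithmic Atiyah sequence and $\mathbb{G}_m^I$-equivariant $\mathcal O_{T_D}$-linear splittings of the relative one.

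Second, on the scheme $T_D$ I would invoke the Atiyah correspondence of \S\ref{sub:atiyah_s_exact_sequence} over the base $\mathbb{A}^I$: the $\mathcal O_{T_D}$-linear splittings of the relative Atiyah sequence are in natural bijection with connections $\nabla_T:p_D^*\mathcal E\to p_D^*\mathcal E\otimes \Omega^1_{T_D/\mathbb{A}^I}$ relative to $\mathbb{A}^I$, the bijection sending $\nabla_T$ to $q_2^\#-\nabla_T$. Since this construction is functorial in $T_D$, it is compatible with the $\mathbb{G}_m^I$-action, hence restricts to a bijection between $\mathbb{G}_m^I$-equivariant splittings and $\mathbb{G}_m^I$-equivariant relative connections.

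Finally I would identify $\mathbb{G}_m^I$-equivariant relative connections on $p_D^*\mathcal E$ with logarithmic connections on $\mathcal E$. Applying ${p_D}_*^{\mathbb{G}_m^I}$ to such a $\nabla_T$ yields a $k$-linear map $\mathcal E\to \mathcal E\otimes\Omega^1_{X/k}(\log D)$; the relative Leibniz rule, written with respect to $d:\mathcal O_{T_D}\to\Omega^1_{T_D/\mathbb{A}^I}$, descends to the logarithmic Leibniz rule for the invariant derivation $d:\mathcal O_X\to\Omega^1_{X/k}(\log D)$ of Definition \ref{def:log_diff_Ollson}, while the $a_D^*\mathcal O_{\mathbb{A}^I}$-linearity of $\nabla_T$ descends to mere $k$-linearity, since the invariants $\left(a_D^*\mathcal O_{\mathbb{A}^I}\right)^{\mathbb{G}_m^I}$ reduce to the constants $k$. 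This assignment is reversible by pulling back along the faithfully flat $p_D$, giving the desired bijection. Composing the three bijections proves the lemma, and naturality is clear from the construction. I expect this last step to be the only delicate point: one must verify precisely that the equivariant relative connections over $\mathbb{A}^I$ are exactly the avatars of logarithmic connections under descent — matching the two Leibniz rules and confirming that $\mathcal O_{\mathbb{A}^I}$-linearity upstairs corresponds to $k$-linearity downstairs — whereas the first two steps are formal consequences of descent and of the already established affine Atiyah correspondence.
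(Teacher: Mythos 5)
Your proposal is correct and takes essentially the same route as the paper: the paper's proof likewise reduces to the (relative) holomorphic Atiyah correspondence on $T_D$ over $\mathbb{A}^I$ and identifies logarithmic connections on $\mathcal{E}$ with $\mathbb{G}_m^I$-equivariant (i.e.\ $\mathbb{Z}^I$-graded) relative connections on $p_D^*\mathcal{E}$, via $\nabla_D=p_D^*\nabla$ and $\nabla={p_D}_*^{\mathbb{G}_m^I}\nabla_D$. The step you flag as delicate is exactly what the paper asserts in compressed form, and your descent bookkeeping (exactness and equivalence of ${p_D}_*^{\mathbb{G}_m^I}$, matching of the two Leibniz rules) just makes that assertion explicit.
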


\begin{proof}
	Since this holds for holomorphic connections (see \S \ref{sub:atiyah_s_exact_sequence}), it is enough to show that logarithmic connections $\nabla:\mathcal E\to \mathcal E\otimes_{\mathcal O_X} \Omega^1_{X/k}(\log(D))$ correspond to connections  $\nabla_D: p_D^*\mathcal E \rightarrow   \Omega^1_{T_D/\mathbb A^I}\otimes p_D^*\mathcal E$ that respect the natural $\mathbb Z^I$-graduations. Starting from $\nabla_D$ we put $\nabla={p_D}_*^{\mathbb G_m^I}\nabla_D$, conversely starting from $\nabla$ we can define  $\nabla_D=p_D^*\nabla$ as in Lemma \ref{lem:pull_back_connection}. It is clear that these constructions are inverse of each other.
\end{proof}

As an immediate consequence, we get the following:

\begin{corollary}
\label{cor:sections-logarithmic-Atiyah-extension}
There is a natural equivalence of categories between
\begin{itemize}
	\item the category $\Con(X,D)$ of logarithmic connections,
	\item the category $\Sec(X,D)$, whose objects are couples $(\mathcal E,\alpha)$, where $\mathcal E$ is a vector bundle on $X$ and $\alpha$ is a section of the logarithmic Atiyah exact sequence of $\mathcal E$, with obvious morphisms.
\end{itemize}
\end{corollary}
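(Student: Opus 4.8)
The plan is to package the object-level bijection of Lemma \ref{lem:sections-logarithmic-Atiyah-extension} into a pair of mutually inverse functors, the only genuinely new content being that the bijection is compatible with morphisms. First I would make explicit what the ``obvious morphisms'' of $\Sec(X,D)$ are. The sheaf of logarithmic principal parts $P^1_{(X,D)/k}(-)$ is a functor of $\mathcal E$, since it is obtained from the standard functor ${q_1}_*q_2^*$ on $T_D$ by pulling back along $p_D$ and applying the exact functor ${p_D}_*^{\mathbb G_m^I}$; moreover the whole logarithmic Atiyah sequence of Lemma \ref{lem:logarithmic_atiyah_exact_sequence} is natural in $\mathcal E$, a morphism $\phi:\mathcal E\to\mathcal E'$ inducing $\id_{\Omega^1_{X/k}(\log D)}\otimes\phi$ on the left-hand term, $P^1_{(X,D)/k}(\phi)$ in the middle, and $\phi$ on the right. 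A morphism $(\mathcal E,\alpha)\to(\mathcal E',\alpha')$ in $\Sec(X,D)$ is then a morphism $\phi$ of the underlying bundles such that $P^1_{(X,D)/k}(\phi)\circ\alpha=\alpha'\circ\phi$.

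Next I would define the two functors on objects via Lemma \ref{lem:sections-logarithmic-Atiyah-extension} and on morphisms by sending $\phi$ to $\phi$, the underlying $\mathcal O_X$-linear map being unchanged in both directions. Everything then reduces to a single claim: for $\phi:\mathcal E\to\mathcal E'$, the splitting identity $P^1_{(X,D)/k}(\phi)\circ\alpha=\alpha'\circ\phi$ holds if and only if $\phi$ is horizontal, that is $(\id\otimes\phi)\circ\nabla=\nabla'\circ\phi$, where $\nabla,\nabla'$ are the logarithmic connections corresponding to $\alpha,\alpha'$.

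To prove this claim I would use the canonical $k$-linear splitting coming from $q_2^\#$, exactly as in the scheme case recalled in \S\ref{sub:atiyah_s_exact_sequence}. Applying ${p_D}_*^{\mathbb G_m^I}$ to the $\mathbb G_m^I$-equivariant splitting $q_2^\#$ of the Atiyah sequence of $p_D^*\mathcal E$ on $T_D$ yields a natural $k$-linear section $\sigma_{\mathcal E}$ of the logarithmic Atiyah sequence, and under the correspondence of Lemma \ref{lem:sections-logarithmic-Atiyah-extension} one has $\alpha=\sigma_{\mathcal E}+\nabla$, and likewise $\alpha'=\sigma_{\mathcal E'}+\nabla'$. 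Naturality of $\sigma$ gives $P^1_{(X,D)/k}(\phi)\circ\sigma_{\mathcal E}=\sigma_{\mathcal E'}\circ\phi$ unconditionally, while the restriction of $P^1_{(X,D)/k}(\phi)$ to the left-hand subsheaf is $\id\otimes\phi$. Subtracting, the splitting identity becomes equivalent to $(\id\otimes\phi)\circ\nabla=\nabla'\circ\phi$, which is precisely horizontality. The two functors are then inverse to each other, since they are inverse on objects by Lemma \ref{lem:sections-logarithmic-Atiyah-extension} and are the identity on underlying maps.

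The argument is essentially bookkeeping, so there is no serious obstacle; the one point demanding care is that the naturality of the canonical splitting $q_2^\#$ and of the Atiyah sequence survives the passage through the equivariant pushforward ${p_D}_*^{\mathbb G_m^I}$. Since this functor is exact and the constructions on $T_D$ are manifestly functorial and $\mathbb G_m^I$-equivariant, this is immediate, and it is what allows the scheme-level computation of \S\ref{sub:atiyah_s_exact_sequence} to be transported verbatim to the logarithmic setting.
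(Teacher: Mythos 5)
Your proposal is correct and takes essentially the same route as the paper, which states this corollary with no separate argument, recording it as an immediate consequence of the functorial bijection of Lemma \ref{lem:sections-logarithmic-Atiyah-extension}. The bookkeeping you make explicit --- the natural $k$-linear splitting $\sigma_{\mathcal E}={p_D}_*^{\mathbb G_m^I}(q_2^\#)$, the identity $\alpha=\sigma_{\mathcal E}+\nabla$, and the resulting equivalence between the splitting identity $P^1_{(X,D)/k}(\phi)\circ\alpha=\alpha'\circ\phi$ and horizontality of $\phi$ --- is precisely the content the authors leave implicit, so there is nothing to correct.
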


The existence of twists of logarithmic connections can now be explained in a somewhat more natural way.

\begin{lemma}
\label{lem:twist_Atiyah_exact_sequence}
Let $B=\sum_{i\in I}\mu_i D_i$ be a Cartier divisor with support in $D$, and $\mathcal{E}$ be a vector bundle on $X$.
The logarithmic Atiyah exact sequence of $\mathcal E(B)$ identifies with the twist of the logarithmic Atiyah exact sequence by $B$. In other words, there is a natural isomorphism $P^1_{(X,D)/k}( \mathcal{E}(B))\simeq P^1_{(X,D)/k}( \mathcal{E})(B)$, compatible with the morphisms in the logarithmic Atiyah exact sequences. 
\end{lemma}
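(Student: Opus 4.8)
The plan is to establish the isomorphism on the canonical flat presentation $p_D:T_D\to X$, where by Definition~\ref{def:logarithmic_principal_parts} the logarithmic principal parts are computed as the $\mathbb G_m^I$-invariant pushforward of the ordinary relative principal parts $P^1_{T_D/\mathbb A^I}$, and then to descend. The essential input is that the line bundle $L:=p_D^*\mathcal O_X(B)$ on $T_D$ carries a \emph{canonical} connection relative to $\mathbb A^I$. Indeed, by the construction of $d(B)$ recalled in \S\ref{ssub:logarithmic_connections}, $L$ is trivialized by $\prod_{i\in I}(a_D)_i^{-\mu_i}$, and since each $(a_D)_i$ is pulled back from $\mathbb A^I$ we have $d\big(\prod_i (a_D)_i^{-\mu_i}\big)=0$ in $\Omega^1_{T_D/\mathbb A^I}$. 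Thus the relative trivial connection endows $L$ with a $\mathbb G_m^I$-equivariant connection, and its invariant pushforward is exactly $d(B)$.

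First I would record the following standard fact on $T_D$, relative to $\mathbb A^I$: if $L$ is a line bundle equipped with a connection, then for any vector bundle $\mathcal G$ there is a natural isomorphism $P^1_{T_D/\mathbb A^I}(\mathcal G\otimes L)\simeq P^1_{T_D/\mathbb A^I}(\mathcal G)\otimes L$, compatible with the two Atiyah exact sequences (inducing the identity on the subsheaf $\Omega^1_{T_D/\mathbb A^I}\otimes\mathcal G\otimes L$ and on the quotient $\mathcal G\otimes L$). This follows from the description $P^1={q_1}_*q_2^*$: a connection on $L$ is precisely an isomorphism $q_2^*L\simeq q_1^*L$ on the first infinitesimal neighbourhood of the diagonal of $T_D$ over $\mathbb A^I$ that restricts to the identity along the diagonal, and substituting $q_2^*L$ by $q_1^*L$ together with the projection formula for $q_1$ yields the claimed isomorphism; compatibility with the exact sequences is immediate because the term $\Omega^1\otimes(-)$ is supported on the diagonal, where $q_1^*L=q_2^*L=L$.

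Applying this with $\mathcal G=p_D^*\mathcal E$ and $L=p_D^*\mathcal O_X(B)$ equipped with its equivariant connection gives a $\mathbb G_m^I$-equivariant isomorphism $P^1_{T_D/\mathbb A^I}(p_D^*\mathcal E\otimes p_D^*\mathcal O_X(B))\simeq P^1_{T_D/\mathbb A^I}(p_D^*\mathcal E)\otimes p_D^*\mathcal O_X(B)$ respecting the Atiyah sequences. Since $p_D^*(\mathcal E(B))=p_D^*\mathcal E\otimes p_D^*\mathcal O_X(B)$, applying the exact functor ${p_D}_*^{\mathbb G_m^I}$ and the equivariant projection formula ${p_D}_*^{\mathbb G_m^I}(-\otimes p_D^*\mathcal O_X(B))\simeq {p_D}_*^{\mathbb G_m^I}(-)\otimes\mathcal O_X(B)$ turns the left-hand side into $P^1_{(X,D)/k}(\mathcal E(B))$ and the right-hand side into $P^1_{(X,D)/k}(\mathcal E)(B)$, yielding the desired natural isomorphism. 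Exactness of ${p_D}_*^{\mathbb G_m^I}$ and the fact that it induces the identity on the sub and quotient terms give the compatibility with the logarithmic Atiyah exact sequences, the left terms agreeing since $\Omega^1_{X/k}(\log D)\otimes\mathcal E(B)=\big(\Omega^1_{X/k}(\log D)\otimes\mathcal E\big)(B)$.

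The only genuinely delicate point, and the one I would check most carefully, is the equivariance: I must ensure that the isomorphism $q_2^*L\simeq q_1^*L$ used above is $\mathbb G_m^I$-equivariant and that its invariant pushforward recovers the twist by $(\mathcal O_X(B),d(B))$ rather than some other connection. This is guaranteed because the connection on $L$ is the relative trivial one in the invariant trivialization $\prod_i(a_D)_i^{-\mu_i}$, which is precisely the data whose invariant pushforward defines $d(B)$; everything else in the argument is formal manipulation of principal parts and the projection formula.
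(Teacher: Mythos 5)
Your proof is correct, and the delicate point you flag (equivariance of the isomorphism and identification of the descended connection with $d(B)$) is handled soundly. The overall strategy coincides with the paper's — reduce to the torsor $T_D$, exploit the canonical trivialization of $p_D^*\mathcal O_X(B)$ by $\prod_{i\in I}(a_D)_i^{-\mu_i}$, and descend along the exact functor ${p_D}_*^{\mathbb G_m^I}$ — but your mechanism on $T_D$ is genuinely different. The paper never invokes a connection: it observes that the trivialization $\mathcal O_{T_D}\simeq\mathcal O_{T_D}(p_D^*B)$ has degree $(\mu_i)_{i\in I}$ for the natural $\mathbb Z^I$-grading, that it induces isomorphisms $P^1_{T_D/\mathbb A^I}(p_D^*\mathcal E)\simeq P^1_{T_D/\mathbb A^I}(p_D^*\mathcal E(B))$ and $P^1_{T_D/\mathbb A^I}(p_D^*\mathcal E)\simeq P^1_{T_D/\mathbb A^I}(p_D^*\mathcal E)(p_D^*B)$, both again of degree $(\mu_i)_{i\in I}$, and that their composite is therefore of degree $\mathbf 0$, i.e.\ $\mathbb G_m^I$-equivariant, hence descends — a pure degree-bookkeeping argument in the graded $\mathcal O$-linear world. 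You instead encode the relative flatness $d\bigl(\prod_i(a_D)_i^{-\mu_i}\bigr)=0$ in $\Omega^1_{T_D/\mathbb A^I}$ as a canonical isomorphism $q_2^*L\simeq q_1^*L$ on the first infinitesimal neighbourhood of the diagonal and feed it into $P^1={q_1}_*q_2^*$; since $q_1$ and $q_2$ agree after composing with $a_D:T_D\to\mathbb A^I$, this isomorphism is literally the identity in the trivialization, so equivariance and compatibility with the Atiyah sequences come out explicitly rather than from a degree count (and in fact your isomorphism coincides with the paper's composite). What your route buys: it is more general — it twists $P^1$ by an arbitrary line bundle equipped with a relative connection — and it identifies the induced connection on the twist as $d(B)$ of Lemma \ref{lem:canonical_connection}, which is exactly the datum needed when the lemma is applied in Lemma \ref{lem:sections-parabolic-Atiyah-extension} and Proposition \ref{prop:larger_equivalence}; what the paper's route buys is brevity and the fact that it stays entirely inside $\mathcal O$-linear algebra, with no recourse to the infinitesimal description of connections.
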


\begin{proof}
	Reasoning on $T_D$, is is enough to see that there is a natural isomorphism $P^1_{_{T_D/\mathbb A^I}}(p_D^*\mathcal E(B))\simeq  P^1_{_{T_D/\mathbb A^I}}(p_D^*\mathcal E)(p_D^*(B))$ that is \( \mathbb G_m^I \)-equivariant, or in other words, of degree $0$ with respect to the natural $ \mathbb{Z}^I$-graduations. But the canonical trivialisation $\mathcal{O}_{T_D}\simeq  \mathcal{O}_{T_D}(p_D^*(B))$ is of degree $(\mu_i)_{i\in I}$, and gives rise to isomorphisms $P^1_{_{T_D/\mathbb A^I}}(p_D^*\mathcal E)\simeq P^1_{_{T_D/\mathbb A^I}}(p_D^*\mathcal E(B))$ and $P^1_{_{T_D/\mathbb A^I}}(p_D^*\mathcal E)\simeq P^1_{_{T_D/\mathbb A^I}}(p_D^*\mathcal E)(p_D^*(B)$, both of degree $(\mu_i)_{i\in I}$, hence the result.

\end{proof}

\subsubsection{Logarithmic connexions on Deligne-Mumford stacks}%
\label{ssub:logarithmic_connexions_on_deligne_mumford_stacks}

Assume now that we are in the stacky log-smooth context (\S \ref{ssub:stacky_context}). 
We can define logarithmic connections on the log-stack $(\mathfrak X,\mathfrak D)$ just as we did for holomorphic connections (\S\ref{ssub:definitions_of_holomorphic_connection_on_a_deligne_mumford_stack}), using the derivation  $d:\mathcal O_{\mathfrak X} \rightarrow \Omega_{\mathfrak X/k}^1(\log(\mathfrak D))$ instead of $d:\mathcal O_{\mathfrak X} \rightarrow  \Omega_{\mathfrak X/k}^1$.

If we assume that $T_{\mathfrak D}=\mathfrak X\times_{[ \mathbb{A}^I/ \mathbb{G}_m^I]}\mathbb{A}^I$ is a scheme, then the constructions of the logarithmic Atiyah exact sequence associated to a vector bundle $\mathcal E$ on $\mathfrak X$ (Lemma \ref{lem:logarithmic_atiyah_exact_sequence}) and the interpretation of its sections as logarithmic connections on $\mathcal E$ (Lemma \ref{lem:sections-logarithmic-Atiyah-extension}, Corollary \ref{cor:sections-logarithmic-Atiyah-extension}) also hold in this context.

\section{The correspondence}%
\label{sec:the_correspondence}

In this section, we use the notations of the log-smooth context (\ref{ssub:logarithmic_context}).

\subsection{Parabolic connections}%
\label{sub:parabolic_connections}

\begin{definition}[]
\label{def:pre-parabolic_connection}
 \begin{itemize}
A \emph{\preparabolic connection} on  $(X, \mathbf D)$ with weights in $\frac{1}{\mathbf r}\mathbb Z^I$ consists of

\item the data of a functor $\mathcal E_\cdot \,:\, \left(\frac{1}{\mathbf r}
	\mathbb Z^I\right)^{op} \longrightarrow \Con(X,D)$ and,

\item the structure of a parabolic vector bundle (Definition \ref{def:parabolic_bundle}) on the underlying functor  $\left(\frac{1}{\mathbf r}	\mathbb Z^I\right)^{op} \longrightarrow \Vect X$.

\end{itemize}
\end{definition}

\begin{remark}
\label{rem:pre-parabolic_connection}

One could define a \preparabolic connection directly by substituting $\Vect X$ by $\Con(X,D)$ in Definition \ref{def:parabolic_bundle}. This is a priori a stronger requirement since the pseudo-periodicity isomorphism is then assumed to be compatible with the connections (a statement that makes sense as   $\mathcal E_{\cdot}\otimes_{\mathcal O_X} \mathcal O_X(-\mathbf l \cdot \mathbf D)$ is naturally endowed with a logarithmic connection, see Lemma \ref{lem:canonical_connection}). But the commutativity of diagram \eqref{eq:parabolic_bundle} shows that this property is in fact automatically realized with our current definition.

\end{remark}

If $ \mathbf{l'}\geq \mathbf{l}$, for each $i\in I$ the induced morphism   $(\mathcal E_{ \frac{\mathbf{l'}}{\mathbf{r}}})_{|D_i} \rightarrow (\mathcal E_{ \frac{\mathbf{l}}{\mathbf{r}}})_{|D_i}$ is compatible with the residues $\res_{|D_i}(\nabla_{ \frac{\mathbf{l'}}{\mathbf{r}}})$ and 
$\res_{|D_i}(\nabla_{\frac{\mathbf{l}}{\mathbf{r}}})$. 
Let us denote by $(e_i)_{i\in I}$ the canonical basis of $\mathbb Z^I$. The morphism above for $\mathbf{l'}=\mathbf{l}+e_i$ has a canonical factorization 

\[ \frac{\mathcal E_{ \frac{\mathbf{l}+e_i}{\mathbf{r}}} }{\mathcal O_X(-D_i)\mathcal E_{ \frac{\mathbf{l}+e_i}{\mathbf{r}}}} \twoheadrightarrow \frac{\mathcal E_{ \frac{\mathbf{l}+e_i}{\mathbf{r}}} }{\mathcal O_X(-D_i)\mathcal E_{ \frac{\mathbf{l}}{\mathbf{r}}}}  \hookrightarrow  \frac{\mathcal E_{ \frac{\mathbf{l}}{\mathbf{r}}} }{\mathcal O_X(-D_i)\mathcal E_{ \frac{\mathbf{l}}{\mathbf{r}}}} \]
and thus by compatibility of the residue morphisms, the middle term \( \mathcal E_{ \frac{\mathbf{l}+e_i}{\mathbf{r}}}  \) is stable by $\res_{|D_i}(\nabla_{\frac{\mathbf{l}}{\mathbf{r}}})$, hence we get an induced morphism on the quotient \(  \frac{\mathcal E_{ \frac{\mathbf{l}}{\mathbf{r}}} }{\mathcal E_{ \frac{\mathbf{l}+e_i}{\mathbf{r}}} }\) that we again denote by  $\res_{|D_i}(\nabla_{\frac{\mathbf{l}}{\mathbf{r}}})$.

\begin{definition}[]
\label{def:parabolic_connection}
A \emph{\parabolic connection} on  $(X, \mathbf D)$ with weights in $\frac{1}{\mathbf r}\mathbb Z^I$ is a \preparabolic connection $(\mathcal E_{\cdot},\nabla_{\cdot})$ such that for each $i\in I$ the induced morphism 
$\res_{|D_i}(\nabla_{\frac{\mathbf{l}}{\mathbf{r}}})$ on \(  \frac{\mathcal E_{ \frac{\mathbf{l}}{\mathbf{r}}} }{\mathcal E_{ \frac{\mathbf{l}+e_i}{\mathbf{r}}} }\) is equal to \( \frac{l_i}{r_i} \id \). 
\end{definition}

The corresponding category will be denoted by $\Par\Con_{\frac{\mathbf 1}{\mathbf r}}^{st}(X,\mathbf D)$.

\subsection{Parabolic connections as sections of the parabolic Atiyah exact sequence}%
\label{sub:pre_parabolic_connections_as_sections_of_the_parabolic_atiyah_exact_sequence}

There is a neat interpretation of a \preparabolic connection within the parabolic world.
Namely, let $\mathcal E_{\cdot}$ be  a parabolic bundle.  By Lemma \ref{lem:twist_Atiyah_exact_sequence} $P^1_{(X,D)/k}(\mathcal E_{\cdot})$ admits a natural parabolic structure, and fits into the following parabolic Atiyah exact sequence built componentwise:
\[ 0 \rightarrow  \mathcal E_{\cdot}\otimes \Omega^1_{X/S} (\log(D)) \rightarrow P^1_{(X,D)/k} (\mathcal E_{\cdot})  \rightarrow \mathcal E_{\cdot} \rightarrow 0  \; , \]

\begin{lemma}
\label{lem:sections-parabolic-Atiyah-extension}
There is a natural bijection between parabolic connections $(\mathcal E_{\cdot},\nabla_{\cdot})$ with underlying parabolic bundle $\mathcal E_\cdot $ and sections of the parabolic Atiyah exact sequence of $\mathcal E_\cdot $.
\end{lemma}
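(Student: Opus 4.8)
The plan is to leverage the componentwise structure already established for individual logarithmic connections (Lemma~\ref{lem:sections-logarithmic-Atiyah-extension}) and show that the parabolic compatibility conditions on both sides match up. The key observation is that a \preparabolic connection is, by Definition~\ref{def:pre-parabolic_connection}, a functor $\mathcal E_\cdot : (\frac{1}{\mathbf r}\mathbb Z^I)^{op}\to \Con(X,D)$ refining the underlying parabolic bundle; and by Corollary~\ref{cor:sections-logarithmic-Atiyah-extension}, for each fixed weight $\frac{\mathbf l}{\mathbf r}$ the data of a logarithmic connection $\nabla_{\frac{\mathbf l}{\mathbf r}}$ on $\mathcal E_{\frac{\mathbf l}{\mathbf r}}$ is equivalent to the data of a section $\alpha_{\frac{\mathbf l}{\mathbf r}}$ of the Atiyah exact sequence $0\to \Omega^1_{X/k}(\log D)\otimes\mathcal E_{\frac{\mathbf l}{\mathbf r}}\to P^1_{(X,D)/k}(\mathcal E_{\frac{\mathbf l}{\mathbf r}})\to \mathcal E_{\frac{\mathbf l}{\mathbf r}}\to 0$.

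First I would set up the correspondence at the level of each weight, applying Corollary~\ref{cor:sections-logarithmic-Atiyah-extension} pointwise to obtain, from the family $(\nabla_{\frac{\mathbf l}{\mathbf r}})$, a family of sections $(\alpha_{\frac{\mathbf l}{\mathbf r}})$ of the componentwise Atiyah sequences. The content of the lemma then reduces to checking that the extra structure---\emph{functoriality} of $\mathcal E_\cdot$ as a diagram in $\Con(X,D)$, i.e.\ the requirement that each transition map $\mathcal E_{\frac{\mathbf l'}{\mathbf r}}\to \mathcal E_{\frac{\mathbf l}{\mathbf r}}$ for $\mathbf l'\geq \mathbf l$ be a morphism of connections---corresponds exactly to the condition that the sections $\alpha_\cdot$ assemble into a \emph{section of the parabolic Atiyah exact sequence}, that is, a natural transformation of the appropriate functors valued in sheaves on $X$.

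The crux is therefore the following compatibility: a section of the parabolic Atiyah sequence is by definition a morphism of parabolic bundles $\mathcal E_\cdot \to P^1_{(X,D)/k}(\mathcal E_\cdot)$ splitting the projection; such a morphism is a natural transformation, so it commutes with all transition maps. I would verify that the naturality squares for $\alpha_\cdot$ against the transition maps $\mathcal E_{\frac{\mathbf l'}{\mathbf r}}\to \mathcal E_{\frac{\mathbf l}{\mathbf r}}$ are precisely the statements that these transition maps intertwine $\nabla_{\frac{\mathbf l'}{\mathbf r}}$ and $\nabla_{\frac{\mathbf l}{\mathbf r}}$. Here the functoriality of the principal parts construction $\mathcal E\mapsto P^1_{(X,D)/k}(\mathcal E)$ in $\mathcal E$, together with Lemma~\ref{lem:twist_Atiyah_exact_sequence} governing the behaviour under the pseudo-periodicity twists, is what guarantees that the parabolic structures on both sides are compatible and that a pointwise-defined family glues into a genuine natural transformation.

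I expect the main obstacle to be bookkeeping rather than conceptual: one must confirm that the naturality condition defining a morphism in $\Par_{\frac{1}{\mathbf r}}(X,\mathbf D)$ (commutativity of the squares and compatibility with the isomorphisms $p_{\mathbf l}$) translates without loss into the connection-compatibility of \emph{all} transition maps in the diagram $\mathcal E_\cdot$, and conversely. By Remark~\ref{rem:pre-parabolic_connection}, compatibility of the pseudo-periodicity isomorphisms $p_{\mathbf l}$ with the canonical connections of Lemma~\ref{lem:canonical_connection} is automatic, so the only genuine content lives along the transition maps within the fundamental domain; reducing to these---and then invoking the already-established bijection of Corollary~\ref{cor:sections-logarithmic-Atiyah-extension} at each vertex---should close the argument.
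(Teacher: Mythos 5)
Your proposal is correct and follows essentially the same route as the paper: the paper's proof is exactly the observation that the bijection of Lemma~\ref{lem:sections-logarithmic-Atiyah-extension} is functorial in $\mathcal E$ (the $\mathcal O_S$-linear splitting $q_2^\#$ being natural, connection-compatibility of a transition map is equivalent to the corresponding naturality square for the sections), so a weightwise application assembles into the parabolic statement. You have simply unpacked this functoriality argument in more detail, including the pseudo-periodicity bookkeeping via Lemma~\ref{lem:twist_Atiyah_exact_sequence} and Remark~\ref{rem:pre-parabolic_connection}, which the paper leaves implicit.
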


\begin{proof}
	As the bijection between logarithmic connections and sections of the logarithmic Atiyah exact sequence (\S \ref{ssub:logarithmic_atiyah_exact_sequence}) is functorial in $\mathcal E$,	this follows from Lemma \ref{lem:sections-logarithmic-Atiyah-extension}.
\end{proof}

\begin{corollary}
\label{cor:sections-parabolic-Atiyah-extension}
There is a natural equivalence of categories between
\begin{itemize}
	\item the category $\Par\Con_{\frac{1}{\mathbf r}}(X,\mathbf D)$ of \preparabolic connections,
	\item the category $\Par\Sec_{\frac{1}{\mathbf r}}(X, \mathbf D)$, whose objects are couples $(\mathcal E_\cdot,\alpha_\cdot)$, where $\mathcal E_{\cdot}$ is a parabolic bundle and $\alpha_\cdot$ is a section of the parabolic Atiyah exact sequence of $\mathcal E_\cdot$, with obvious morphisms.
\end{itemize}
\end{corollary}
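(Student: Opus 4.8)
The plan is to upgrade the objectwise bijection of Lemma \ref{lem:sections-parabolic-Atiyah-extension} to a categorical equivalence exactly as Corollary \ref{cor:sections-logarithmic-Atiyah-extension} upgrades Lemma \ref{lem:sections-logarithmic-Atiyah-extension}, by carrying out the non-parabolic equivalence levelwise over the poset $\left(\frac{1}{\mathbf r}\mathbb Z^I\right)^{op}$. Write $F:\Con(X,D)\to \Sec(X,D)$ for the equivalence of Corollary \ref{cor:sections-logarithmic-Atiyah-extension}, sending a logarithmic connection $(\mathcal E,\nabla)$ to the pair $(\mathcal E,\alpha)$ where $\alpha$ is the corresponding section of the logarithmic Atiyah sequence. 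The features of $F$ that I would record first are: it is natural in the underlying bundle (hence compatible with morphisms), it commutes with the forgetful functors to $\Vect X$, and by Lemma \ref{lem:twist_Atiyah_exact_sequence} it is compatible with the twists by $\mathcal O_X(-\mathbf l\cdot \mathbf D)$, in the sense that under the canonical identification $P^1_{(X,D)/k}(\mathcal E(B))\simeq P^1_{(X,D)/k}(\mathcal E)(B)$ the functor $F$ intertwines the canonical twisted connection with the twisted section.

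Next I would define the functor $\Par\Con_{\frac{1}{\mathbf r}}(X,\mathbf D)\to \Par\Sec_{\frac{1}{\mathbf r}}(X,\mathbf D)$ by postcomposition with $F$. A \preparabolic connection is a functor $\mathcal E_\cdot:\left(\frac{1}{\mathbf r}\mathbb Z^I\right)^{op}\to \Con(X,D)$ together with a parabolic structure on its composite with the forgetful functor to $\Vect X$; applying $F$ levelwise yields a functor $F\circ \mathcal E_\cdot$ into $\Sec(X,D)$. Since $F$ commutes with the forgetful functors to $\Vect X$, the underlying $\Vect X$-valued functor is literally unchanged, so it retains the very same parabolic structure, and we obtain a couple $(\mathcal E_\cdot,\alpha_\cdot)$. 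Here I would need to verify that the collection of sections $\alpha_\cdot$ is genuinely a section of the \emph{parabolic} Atiyah exact sequence, that is, a natural transformation of parabolic bundles respecting the pseudo-periodicity isomorphisms. This is the only point requiring care, and it is supplied by combining Remark \ref{rem:pre-parabolic_connection} (the pseudo-periodicity isomorphisms are automatically compatible with the connections) with the twist-compatibility of $F$ noted above, which translates compatibility with connections into compatibility with sections. The inverse functor is defined symmetrically by postcomposition with the quasi-inverse $F^{-1}$.

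Finally I would check that these two functors are quasi-inverse and realise the claimed equivalence. That they are mutually inverse on objects is precisely the bijection of Lemma \ref{lem:sections-parabolic-Atiyah-extension} applied levelwise; the unit and counit isomorphisms are induced levelwise by those of the equivalence $F$ and are automatically morphisms of parabolic objects since $F$ fixes the underlying $\Vect X$-valued data. On morphisms, a morphism of \preparabolic connections is a natural transformation of the functors into $\Con(X,D)$ compatible with the parabolic structure; naturality of $F$ sends it to a natural transformation of the functors into $\Sec(X,D)$ compatible with the same parabolic structure, and conversely, whence full faithfulness. The main, and essentially only, obstacle is the compatibility of the parabolic periodicity with the Atiyah sections addressed in the second paragraph, which is exactly why Lemma \ref{lem:twist_Atiyah_exact_sequence} is the technical heart of the argument.
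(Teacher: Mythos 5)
Your proposal is correct and follows essentially the same route as the paper, which states this corollary without a separate proof precisely because it regards it as the functorial upgrade of the objectwise bijection of Lemma \ref{lem:sections-parabolic-Atiyah-extension}, exactly parallel to how Corollary \ref{cor:sections-logarithmic-Atiyah-extension} follows from Lemma \ref{lem:sections-logarithmic-Atiyah-extension}. Your write-up simply makes explicit the details the paper leaves implicit, namely the levelwise application of the equivalence $\Con(X,D)\simeq\Sec(X,D)$ and the compatibility of pseudo-periodicity with sections via Lemma \ref{lem:twist_Atiyah_exact_sequence} and Remark \ref{rem:pre-parabolic_connection}.
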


\subsection{Reconstruction of the parabolic structure}%
\label{sub:reconstruction_of_the_parabolic_structure}

It turns out that given a \parabolic connection \( (\mathcal{E}_\cdot, \nabla_\cdot) \), the underlying bundle  \( (\mathcal{E}_0, \nabla_0 )\), endowed with its logarithmic connection, enables to reconstruct the parabolic structure. To explain precisely how this is possible, one needs to consider parabolic bundles from a slightly different point of view.

\subsubsection{Seshadri's definition}%
\label{ssub:seshadri_s_definition}

\begin{definition}[]
\label{def:weight_filtration}
Let  \( \mathcal{E}_\cdot \) be a parabolic bundle with weights in  $\frac{1}{\mathbf r} \mathbb Z^I$. The \emph{weight filtration} on $ { \mathcal E_0} _{|D}$ is the filtration indexed by $\frac{1}{\mathbf r} \mathbb Z^I\cap [0,1]^I$ given by 

\[ F^w _{ \frac{ \mathbf{l}} { \mathbf{r}}}\left( { \mathcal E_0} _{|D} \right) = \frac{  \mathcal E_{ \frac{ \mathbf{l}} { \mathbf{r}}}}{\mathcal E_0(-D)} \] 

\end{definition}

One can introduce a category $\Sesh_{\frac{\mathbf 1}{\mathbf r}}(X,\mathbf D)$ whose objects are couples $(\mathcal{E},F_\cdot)$ where $\mathcal{E}$ is a vector bundle on $X$ and $F_\cdot$ is a decreasing filtration on $ { \mathcal E_0} _{|D}$ indexed by $\frac{1}{\mathbf r} \mathbb Z^I\cap [0,1]^I$ such that $F_{\mathbf 0} \mathcal E _{|D}= \mathcal E _{|D}$ and $F_{\mathbf 1} \mathcal E _{|D}=0$. It is clear that the functor  \( \mathcal{E}_\cdot  \mapsto ( {\mathcal E_0} _{|D},F^w _\cdot ) \) enables to see $\Par_{\frac{\mathbf 1}{\mathbf r}}(X,\mathbf D)$ as a full subcategory of $\Sesh_{\frac{\mathbf 1}{\mathbf r}}(X,\mathbf D)$.

\begin{definition}[]
\label{def:cartesian_filtration}
Let $\mathcal G$ be a sheaf on $D$, and $F_\cdot$ a decreasing filtration on $\mathcal G$ indexed by $\frac{1}{\mathbf r} \mathbb Z^I\cap [0,1]^I$ such that $F_{ \mathbf{0} } \mathcal G= \mathcal{G}$ and $F_{ \mathbf{1}} \mathcal G=0$. We will say that the filtration is cartesian if for all \( \frac{ \mathbf{l}} { \mathbf{r}} \)  in $\frac{1}{\mathbf r} \mathbb Z^I\cap [0,1]^I$ :

\[ F_{ \frac{ \mathbf{l}} { \mathbf{r}}}\left(\mathcal G \right) = \bigcap_{i\in I} F_{ \frac{l_i}{r_i}e_i}\left(\mathcal G \right) \]
where $(e_i)_{i\in I}$ stands for the canonical basis of $ \mathbb Z^I$.
\end{definition}

\begin{lemma}
\label{lem:weight_filtration_cartesian}

Let \( \mathcal{E}_\cdot \) be a parabolic bundle with weights in  $\frac{1}{\mathbf r} \mathbb Z^I$.  The weight filtration $F^w_\cdot$ on $ { \mathcal E_0} _{|D}$ is cartesian.
\end{lemma}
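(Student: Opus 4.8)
The plan is to reduce the statement to Lemma~\ref{lem:parabolic_bundle_restricted_axes}, which already encodes the cartesian property at the level of the subsheaves $\mathcal{E}_\cdot$ of $\mathcal{E}_0$, and then to descend the identity to the quotient by $\mathcal{E}_0(-D)$ that defines the weight filtration.

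First I would fix $\frac{\mathbf{l}}{\mathbf{r}}$ in $\frac{1}{\mathbf r}\mathbb Z^I\cap[0,1]^I$, so that $\mathbf 0\leq \mathbf{l}\leq \mathbf{r}$. Applying Lemma~\ref{lem:parabolic_bundle_restricted_axes} with the pair $(\mathbf 0,\mathbf{l})$ in the role of $(\mathbf{l},\mathbf{l'})$ — whose hypothesis $\mathbf 0\leq \mathbf{l}\leq \mathbf 0+\mathbf{r}$ is precisely our range of weights — yields, as subsheaves of $\mathcal{E}_0$,
\[
\mathcal{E}_{\frac{\mathbf{l}}{\mathbf{r}}}=\bigcap_{i\in I}\mathcal{E}_{\frac{l_i}{r_i}e_i}\;.
\]
Since $\mathcal{E}_\cdot$ is a decreasing filtration and every weight $\frac{l_i}{r_i}e_i$ as well as $\frac{\mathbf{l}}{\mathbf{r}}$ is $\leq \mathbf 1$, each sheaf occurring here contains $\mathcal{E}_{\mathbf 1}$, which via the pseudo-periodicity isomorphism is identified with $\mathcal{E}_0(-\mathbf 1\cdot\mathbf D)=\mathcal{E}_0(-D)$.

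Then I would quotient the displayed equality by the common subsheaf $\mathcal{E}_0(-D)$. The only point requiring attention is that quotienting by a fixed subsheaf commutes with a finite intersection of subsheaves that all contain it: for subsheaves $\mathcal{A}_i\supseteq\mathcal{C}$ of a sheaf $\mathcal{M}$, one has $\left(\bigcap_{i\in I}\mathcal{A}_i\right)/\mathcal{C}=\bigcap_{i\in I}\left(\mathcal{A}_i/\mathcal{C}\right)$ inside $\mathcal{M}/\mathcal{C}$. As $I$ is finite, this may be checked on stalks, where it is the elementary module-theoretic identity. Recalling that $F^w_{\frac{\mathbf{l}}{\mathbf{r}}}\!\left({\mathcal E_0}_{|D}\right)=\mathcal{E}_{\frac{\mathbf{l}}{\mathbf{r}}}/\mathcal{E}_0(-D)$ by Definition~\ref{def:weight_filtration}, this gives exactly
\[
F^w_{\frac{\mathbf{l}}{\mathbf{r}}}\!\left({\mathcal E_0}_{|D}\right)=\bigcap_{i\in I}F^w_{\frac{l_i}{r_i}e_i}\!\left({\mathcal E_0}_{|D}\right),
\]
which is the defining condition for the filtration to be cartesian.

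The main (and essentially only) obstacle is the commutation of the quotient with the intersection; but it is harmless in this setting, because the index set $I$ is finite and $\mathcal{E}_0(-D)$ sits inside every term, so the stalk-wise verification suffices. In particular no extra flatness or coprimality hypothesis is needed here, these having already been absorbed into the proof of Lemma~\ref{lem:parabolic_bundle_restricted_axes}.
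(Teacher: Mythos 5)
Your proof is correct and takes essentially the same route as the paper, whose entire proof consists of the citation ``this follows from Lemma \ref{lem:parabolic_bundle_restricted_axes}'' — implicitly meaning exactly your specialization to the pair $(\mathbf 0,\mathbf l)$ with $\mathbf l/\mathbf r\in[0,1]^I$. You merely make explicit the routine step the paper leaves tacit, namely that passing to the quotient by the common subsheaf $\mathcal E_0(-D)=\mathcal E_{\mathbf 1}$ commutes with the finite intersection, which your stalkwise verification handles correctly.
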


\begin{proof}
	This follows from Lemma \ref{lem:parabolic_bundle_restricted_axes}.
\end{proof}

\subsubsection{Connections with semi-simple residues}%
\label{ssub:connections_with_semi_simple_residues}

\begin{definition}[]
\label{def:connections_semi-simple_residues}
We denote by  $\Con_{\frac{\mathbf 1}{\mathbf r}}^{ss}(X,\mathbf D)$ the category whose objects are logarithmic connections $(\mathcal E,\nabla)$ along $D$ such that for each $i\in I$ the residue $\res_{D_i} \nabla$ is semi-simple with eigenvalues in  $\frac{1}{r_i} \mathbb Z\cap [0,1[$.
\end{definition}

For such a connection, one denotes for each $i\in I$ and each $\frac{l_i}{r_i} \in \frac{1}{r_i} \mathbb Z\cap [0,1[$
by  $\mathcal{E}_{|D_i}( \frac{l_i}{r_i})$ the subsheaf corresponding to the eigenvalue   $\frac{l_i}{r_i}$ with respect to the residue $\res_{D_i} \nabla$. We get a filtration $F^\nabla_\cdot$ indexed by $ \frac{1}{r_i} \mathbb{Z}\cap[0,1]$ 
$$F_{\frac{l_i}{r_i}}^\nabla( \mathcal{E}_{|D_i})=\oplus_{l_i\leq m_i < r_i}\mathcal{E}_{|D_i}( \frac{m_i}{r_i})  $$ 
of $\mathcal{E}_{|D_i}$ such that $F_{0}^\nabla( \mathcal{E}_{|D_i})=\mathcal{E}_{|D_i}$ and $F_{1}^\nabla( \mathcal{E}_{|D_i})=0$. By pulling-back along the canonical epimorphism $\mathcal E_{|D} \twoheadrightarrow \mathcal{E}_{|D_i}$ one gets a filtration $F^\nabla_\cdot$ of $\mathcal E_D$  indexed by $ \frac{1}{r_i} \mathbb{Z}\cap[0,1]$ such that $F_{0}^\nabla( \mathcal{E}_{|D})=\mathcal{E}_{|D}$ and $F_{1}^\nabla( \mathcal{E}_{|D})= \frac{\mathcal{E}(-D_i)}{\mathcal{E}(-D)}$. This filtration extends to a filtration $F^\nabla_\cdot$ of $\mathcal E_D$ indexed by   $\frac{1}{\mathbf r} \mathbb Z^I\cap [0,1]^I$ by putting

\[ F^\nabla_{ \frac{ \mathbf{l}} { \mathbf{r}}}\left(\mathcal E_D \right) = \bigcap_{i\in I} F^\nabla_{ \frac{l_i}{r_i}}\left(\mathcal E_D  \right) \; .\] 
This filtration is, by construction, cartesian, and verifies $F_{ \mathbf{0} }\left(\mathcal E_D \right)  = \mathcal E_D$ and $F_{ \mathbf{1}}\left(\mathcal E_D \right) =0$.

\subsubsection{Coincidence of the filtrations}%
\label{ssub:coincidence_of_the_filtrations}

\begin{proposition}
\label{prop:parabolic-connection-semisimple-residues}
Let \( (\mathcal{E}_\cdot, \nabla_\cdot) \) be a \parabolic connection on  $(X, \mathbf D)$ with weights in $\frac{1}{\mathbf r}\mathbb Z^I$. The logarithmic connection   \( (\mathcal{E}_0, \nabla_0 )\) has semi-simple residues with eigenvalues in  $\frac{1}{\mathbf r} \mathbb Z^I\cap [0,1[^I$ and moreover the filtration of ${\mathcal E_0}_{|D}$ associated to $\nabla_0$ coincides with the weight filtration associated to the parabolic bundle \( \mathcal{E}_\cdot \), that is $F^w_\cdot=F^{\nabla_0}_\cdot$. 
\end{proposition}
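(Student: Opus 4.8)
The plan is to establish the statement one divisor at a time and then glue the two filtrations, exploiting that both are cartesian. Fix $i\in I$. For $0\le l_i\le r_i$ the subsheaves $\mathcal E_{\frac{l_i}{r_i}e_i}\subseteq \mathcal E_0$ form a decreasing chain which, since $\mathcal E_{e_i}=\mathcal E_0(-D_i)$, runs from $\mathcal E_0$ down to $\mathcal E_0(-D_i)$. Because $\mathcal E_0(-D_i)=\mathcal E_{e_i}\subseteq \mathcal E_{\frac{l_i}{r_i}e_i}$ for every $l_i\le r_i$, reducing modulo $\mathcal E_0(-D_i)$ requires no $\mathrm{Tor}$ correction, and I obtain a filtration $G^{(l_i)}:=\mathcal E_{\frac{l_i}{r_i}e_i}/\mathcal E_0(-D_i)$ of $\mathcal E_0|_{D_i}$ whose graded pieces are exactly the quotients $\mathcal E_{\frac{l_i}{r_i}e_i}/\mathcal E_{\frac{l_i+1}{r_i}e_i}$ appearing in Definition \ref{def:parabolic_connection}.

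Next I would show that $\res_{D_i}(\nabla_0)$ respects $G^\bullet$ and compute its action on the graded pieces. Since $(\mathcal E_\cdot,\nabla_\cdot)$ is a functor into $\Con(X,D)$, the inclusion $\mathcal E_{\frac{l_i}{r_i}e_i}\hookrightarrow \mathcal E_0$ is a morphism of logarithmic connections; hence $\nabla_0$ preserves $\mathcal E_{\frac{l_i}{r_i}e_i}$ and restricts there to $\nabla_{\frac{l_i}{r_i}e_i}$. Passing to residues (compatible with these inclusions, as recorded just before Definition \ref{def:parabolic_connection}), the operator $\res_{D_i}(\nabla_0)$ preserves each $G^{(l_i)}$, and the endomorphism it induces on $G^{(l_i)}/G^{(l_i+1)}$ is the one induced by $\res_{D_i}(\nabla_{\frac{l_i}{r_i}e_i})$, namely $\frac{l_i}{r_i}\id$ by the \parabolic hypothesis.

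The crux is then a splitting argument. The operator $\res_{D_i}(\nabla_0)$ is an $\mathcal O_{D_i}$-linear endomorphism preserving the finite filtration $G^\bullet$ and acting as the scalars $\frac{l_i}{r_i}$, $0\le l_i<r_i$, on its successive quotients. When these scalars are pairwise distinct in $k$ --- equivalently when the differences $\frac{l_i-m_i}{r_i}$ are invertible for $l_i\ne m_i$ --- the Lagrange idempotents $\prod_{m_i\ne l_i}\bigl(\res_{D_i}(\nabla_0)-\tfrac{m_i}{r_i}\bigr)\bigl(\tfrac{l_i}{r_i}-\tfrac{m_i}{r_i}\bigr)^{-1}$ are well defined, orthogonal, and sum to the identity. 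This yields semi-simplicity of $\res_{D_i}(\nabla_0)$ with eigenvalues in $\frac{1}{r_i}\mathbb Z\cap[0,1[$ together with the splitting $G^{(l_i)}=\bigoplus_{m_i\ge l_i}\mathcal E_0|_{D_i}(\tfrac{m_i}{r_i})$, i.e.\ $F^{\nabla_0}_{\frac{l_i}{r_i}}(\mathcal E_0|_{D_i})=G^{(l_i)}$. I expect this step --- splitting over the ring $\mathcal O_{D_i}$ rather than a field, and especially the required invertibility of the differences of weights (which forces some control of $\operatorname{char}k$ relative to the $r_i$, and is automatic in characteristic zero) --- to be the only genuinely delicate point.

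Finally I would glue. Pulling the equality $F^{\nabla_0}_{\frac{l_i}{r_i}}(\mathcal E_0|_{D_i})=G^{(l_i)}=\mathcal E_{\frac{l_i}{r_i}e_i}/\mathcal E_0(-D_i)$ back along the surjection $\mathcal E_0|_D\twoheadrightarrow \mathcal E_0|_{D_i}$ produces $\mathcal E_{\frac{l_i}{r_i}e_i}/\mathcal E_0(-D)$, which is precisely $F^w_{\frac{l_i}{r_i}e_i}(\mathcal E_0|_D)$ (Definition \ref{def:weight_filtration}); thus $F^{\nabla_0}$ and $F^w$ agree along every coordinate axis. Since $F^{\nabla_0}$ is cartesian by its construction in \S\ref{ssub:connections_with_semi_simple_residues} and $F^w$ is cartesian by Lemma \ref{lem:weight_filtration_cartesian}, agreement on the axes propagates to all multi-indices, giving $F^w_\cdot=F^{\nabla_0}_\cdot$ and completing the proof.
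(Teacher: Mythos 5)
Your proof is correct, but at the decisive step it takes a genuinely different route from the paper. The paper first reduces to $\# I=1$ via Lemma \ref{lem:weight_filtration_cartesian} (you instead argue divisor by divisor and glue at the end using the same cartesianness lemma --- materially the same reduction), and then invokes two inputs you avoid: \cite[Lemme 2.3.11]{bor:rep}, which says the weight filtration of ${\mathcal E_0}_{|D}$ is a filtration by $\mathcal O_D$-subbundles, and Lemma \ref{lem:filtration-of-semisimple}, whose proof splits the filtration by a rank-counting argument (Lemma \ref{lem:locally-free-cokernel}) that needs both this local freeness and the reducedness of $D$. Your Lagrange-idempotent argument replaces all of this: since $\phi=\res_{D_i}(\nabla_0)$ preserves $G^\bullet$ and acts by $\frac{l_i}{r_i}$ on the graded pieces, one has $\prod_{l_i}(\phi-\frac{l_i}{r_i}\id)=0$, and the idempotents $e_{l_i}$ give both the eigendecomposition and the identification $G^{(l_i)}=\bigoplus_{m_i\ge l_i}\mathcal E_0|_{D_i}(\frac{m_i}{r_i})$ purely algebraically; you should spell out the small verification you leave implicit, namely that $\prod_{m_i<l_i}(\phi-\frac{m_i}{r_i}\id)$ maps everything into $G^{(l_i)}$ (factors applied in increasing order push down the filtration, and all factors commute), so that $e_{m_i}$ kills $G^{(l_i)}$ for $m_i<l_i$ while $\im e_{m_i}\subseteq G^{(m_i)}$. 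What your route buys is elementarity and independence from the local-freeness and reducedness hypotheses; what it loses relative to the paper's is the extra output of \cite[Lemme 2.3.11]{bor:rep} that the eigensheaves are locally free of the expected ranks. Finally, your caveat about invertibility of the differences $\frac{l_i-m_i}{r_i}$ is well taken and is not a defect specific to your method: the paper's Lemma \ref{lem:filtration-of-semisimple} requires the $\lambda_i$ to be pairwise distinct in $k$, which for $\lambda_{l_i}=\frac{l_i}{r_i}$ is exactly the same condition (automatic in characteristic $0$, or characteristic at least $r_i$, but not guaranteed by the standing assumption that $r_i$ is merely invertible in $k$); the paper applies that lemma without comment on this point, so your version is, if anything, the more careful one.
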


In other words, in the commutative diagram

\begin{center}
	\begin{tikzcd}
                     &	(\mathcal E_\cdot,\nabla_\cdot) \arrow[r, maps to]              & \mathcal E_\cdot          &                      \\
		(\mathcal E_\cdot,\nabla_\cdot) \arrow[d, maps to] &  \Par\Con_{\frac{\mathbf 1}{\mathbf r}}^{st}(X,\mathbf D) \arrow[d] \arrow[r]             & \Par_{\frac{\mathbf 1}{\mathbf r}}(X,\mathbf D) \arrow{d} & \mathcal E_\cdot \arrow[d, maps to] \\
		(\mathcal E_{ \mathbf{0}},\nabla_{ \mathbf{0}})                   & \Con^{ss}_{\frac{\mathbf 1}{\mathbf r}}(X,\mathbf D) \arrow[r]                       & \Sesh_{\frac{\mathbf 1}{\mathbf r}}(X,\mathbf D)          & (\mathcal E_{ \mathbf{0}},F^w_\cdot)                   \\
							  & ( \mathcal{E},\nabla) \arrow[r, maps to, shift right] & ( \mathcal{E},F_\cdot^\nabla)           &                     
\end{tikzcd}
\end{center}
the left hand functor is well defined and the diagram commutes. Before proving Proposition \ref{prop:parabolic-connection-semisimple-residues} we state a consequence:

\begin{corollary}
\label{cor:parabolic-connection-semisimple-residues}
Let \( (\mathcal{E}_\cdot, \nabla_\cdot) \) and \( (\mathcal{E'}_\cdot, \nabla'_\cdot) \) be two \parabolic connections on  $(X, \mathbf D)$ with weights in $\frac{1}{\mathbf r}\mathbb Z^I$. Then any isomorphism $(\mathcal E_{ \mathbf{0}},\nabla_{ \mathbf{0}})  \simeq (\mathcal E'_{ \mathbf{0}},\nabla'_{ \mathbf{0}})  $ lifts uniquely to an isomorphism \( (\mathcal{E}_\cdot, \nabla_\cdot) \simeq (\mathcal{E'}_\cdot, \nabla'_\cdot) \)
\end{corollary}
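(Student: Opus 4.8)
The plan is to deduce everything from Proposition \ref{prop:parabolic-connection-semisimple-residues}, which already carries the essential content; the work in the corollary is then largely formal bookkeeping. Fix an isomorphism $\phi \colon (\mathcal E_{\mathbf 0}, \nabla_{\mathbf 0}) \to (\mathcal E'_{\mathbf 0}, \nabla'_{\mathbf 0})$ of logarithmic connections. By Proposition \ref{prop:parabolic-connection-semisimple-residues}, both sides lie in $\Con^{ss}_{\frac{\mathbf 1}{\mathbf r}}(X,\mathbf D)$, and the eigenvalue filtrations $F^{\nabla_{\mathbf 0}}_\cdot$ and $F^{\nabla'_{\mathbf 0}}_\cdot$ coincide with the respective weight filtrations $F^w_\cdot$. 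I will proceed in three steps: (i) upgrade $\phi$ to an isomorphism of parabolic bundles; (ii) check that this lift automatically intertwines the connections at every level; (iii) conclude uniqueness.

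For the first step, recall that an isomorphism of logarithmic connections is compatible with the residues $\res_{D_i}$, hence carries each eigenspace of $\res_{D_i}(\nabla_{\mathbf 0})$ isomorphically onto the eigenspace of $\res_{D_i}(\nabla'_{\mathbf 0})$ attached to the same eigenvalue. Consequently $\phi_{|D}$ preserves the filtrations $F^{\nabla_{\mathbf 0}}_\cdot$ and $F^{\nabla'_{\mathbf 0}}_\cdot$, together with their pairwise intersections over $i\in I$. By Proposition \ref{prop:parabolic-connection-semisimple-residues} these filtrations coincide with the weight filtrations, so $\phi$ defines a morphism in $\Sesh_{\frac{\mathbf 1}{\mathbf r}}(X,\mathbf D)$; applying the same argument to $\phi^{-1}$ shows it is an isomorphism there. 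Since $\Par_{\frac{\mathbf 1}{\mathbf r}}(X,\mathbf D)$ sits inside $\Sesh_{\frac{\mathbf 1}{\mathbf r}}(X,\mathbf D)$ as a full subcategory (see the discussion following Definition \ref{def:weight_filtration}), and both $\mathcal E_\cdot$ and $\mathcal E'_\cdot$ are objects of it, $\phi$ lifts uniquely to an isomorphism of parabolic bundles $\widetilde\phi \colon \mathcal E_\cdot \to \mathcal E'_\cdot$.

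For the second step, I observe that in the fundamental domain $\frac{1}{\mathbf r}\mathbb Z^I\cap[0,1[^I$ each structure map $\mathcal E_{\frac{\mathbf l}{\mathbf r}}\hookrightarrow \mathcal E_{\mathbf 0}$ is, by Definition \ref{def:pre-parabolic_connection}, a morphism in $\Con(X,D)$; therefore $\nabla_{\frac{\mathbf l}{\mathbf r}}$ is nothing but the restriction of $\nabla_{\mathbf 0}$ to the subsheaf $\mathcal E_{\frac{\mathbf l}{\mathbf r}}$, and similarly for the primed objects. Since $\widetilde\phi$ carries $\mathcal E_{\frac{\mathbf l}{\mathbf r}}$ onto $\mathcal E'_{\frac{\mathbf l}{\mathbf r}}$ compatibly with the inclusions into $\mathcal E_{\mathbf 0}$ and $\mathcal E'_{\mathbf 0}$, and since $\phi$ intertwines $\nabla_{\mathbf 0}$ with $\nabla'_{\mathbf 0}$, the induced map on each $\mathcal E_{\frac{\mathbf l}{\mathbf r}}$ intertwines $\nabla_{\frac{\mathbf l}{\mathbf r}}$ with $\nabla'_{\frac{\mathbf l}{\mathbf r}}$. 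The pseudo-periodicity isomorphisms, which are compatible with the connections (Remark \ref{rem:pre-parabolic_connection}), propagate this to all $\alpha\in\frac{1}{\mathbf r}\mathbb Z^I$. Hence $\widetilde\phi$ is an isomorphism of \parabolic connections lifting $\phi$.

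Uniqueness is then immediate: any lift is in particular a morphism of the underlying parabolic bundles, and by full faithfulness of $\Par_{\frac{\mathbf 1}{\mathbf r}}(X,\mathbf D)\hookrightarrow \Sesh_{\frac{\mathbf 1}{\mathbf r}}(X,\mathbf D)$ it is determined by its level-$\mathbf 0$ component $\phi$. The only point requiring genuine care is the translation in step (i) between the residue eigenvalue filtration of $\nabla_{\mathbf 0}$ and the parabolic weight filtration, and this is precisely what Proposition \ref{prop:parabolic-connection-semisimple-residues} supplies; everything else is formal. I therefore expect no serious obstacle beyond invoking that proposition correctly.
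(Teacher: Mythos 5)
Your proof is correct and follows essentially the same route as the paper: invoke Proposition \ref{prop:parabolic-connection-semisimple-residues} (residues of an isomorphism of connections match eigen-filtrations, which equal the weight filtrations) together with the full embedding $\Par_{\frac{\mathbf 1}{\mathbf r}}(X,\mathbf D)\hookrightarrow \Sesh_{\frac{\mathbf 1}{\mathbf r}}(X,\mathbf D)$ to lift the level-$\mathbf 0$ isomorphism to the parabolic bundles, then use compatibility at level $\mathbf 0$ and the pseudo-periodicity isomorphisms to get compatibility with the connections at all indices. Your write-up merely makes explicit the steps (restriction of $\nabla_{\mathbf 0}$ on the fundamental domain, uniqueness via full faithfulness) that the paper's two-sentence proof leaves implicit.
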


\begin{proof}
If $(\mathcal E_{ \mathbf{0}},\nabla_{ \mathbf{0}})  \simeq (\mathcal E'_{ \mathbf{0}},\nabla'_{ \mathbf{0}})  $, then  Proposition \ref{prop:parabolic-connection-semisimple-residues} implies that the isomorphism $\mathcal E_{ \mathbf{0}} \simeq \mathcal E'_{ \mathbf{0}}  $ lifts to an isomorphism of parabolic bundles $\mathcal E_{ \cdot} \simeq \mathcal E'_{ \cdot}  $. But since $\mathcal E_{ \mathbf{0}} \simeq \mathcal E'_{ \mathbf{0}}  $ is compatible with the connections, the pseudo-periodicity isomorphisms ensure that $\mathcal E_{ \cdot} \simeq \mathcal E'_{ \cdot}  $ is compatible with the connections as well.
\end{proof}

In order to prove Proposition \ref{prop:parabolic-connection-semisimple-residues}, we first recall a well-known lemma.

\begin{lemma}
\label{lem:locally-free-cokernel}
Let $Y$ be a scheme, $\phi: \mathcal{E}\rightarrow \mathcal{E'}$ a morphism of finite locally free sheaves, and for each $y\in Y$, denote by $\rk_y \phi$ the rank of the $k(y)$-linear morphism $\phi\otimes k(y): \mathcal{E}\otimes_{\mathcal{O}_Y} k(y)\rightarrow \mathcal{E}'\otimes_{\mathcal{O}_Y} k(y)$. Then:
\begin{enumerate}
	\item if $\coker \phi$ is locally free, then $y\mapsto  \rk_y \phi$ is locally constant,
	\item if $Y$ is reduced and $y\mapsto  \rk_y \phi$ is locally constant then $\coker \phi$ is locally free.
\end{enumerate}

\end{lemma}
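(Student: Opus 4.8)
The plan is to reduce both statements to a single fiber-rank computation for the cokernel $\mathcal{C} := \coker\phi$, which is finitely presented because it is the cokernel of a morphism of finite locally free sheaves. Applying the right-exact functor $-\otimes_{\mathcal{O}_Y} k(y)$ to the presentation $\mathcal E \xrightarrow{\phi} \mathcal E' \to \mathcal C \to 0$ gives an exact sequence $\mathcal E\otimes k(y) \xrightarrow{\phi\otimes k(y)} \mathcal E'\otimes k(y) \to \mathcal C\otimes k(y)\to 0$, so that $\mathcal{C}\otimes k(y)=\coker(\phi\otimes k(y))$ and hence
\[ \dim_{k(y)} \mathcal{C}\otimes_{\mathcal O_Y} k(y) = \rk \mathcal E' - \rk_y\phi \; . \]
Since $\mathcal E'$ is locally free its rank is locally constant, so this identity shows that $y\mapsto \rk_y\phi$ is locally constant \emph{if and only if} $y\mapsto \dim_{k(y)}\mathcal C\otimes k(y)$ is locally constant. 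In particular, for part (1), if $\mathcal C$ is locally free then the latter function is its rank, hence locally constant, and the displayed identity immediately yields that $\rk_y\phi$ is locally constant.

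For part (2), the equivalence above turns the hypothesis into the statement that $\mathcal C$ is a finitely presented sheaf whose fiber dimension is locally constant, over a reduced scheme $Y$; I would then invoke the classical criterion that such a sheaf is automatically locally free. To keep the argument self-contained I would sketch it as follows. Working near a point $y_0$ where $\dim_{k(y)}\mathcal C\otimes k(y)\equiv n$, I choose sections $s_1,\dots,s_n$ of $\mathcal C$ inducing a basis of $\mathcal C\otimes k(y_0)$; by Nakayama they generate $\mathcal C$ on a neighborhood $U$, giving a surjection $\mathcal O_U^{\oplus n}\twoheadrightarrow \mathcal C$ whose kernel $\mathcal K$ is of finite type (as $\mathcal C$ is finitely presented). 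Tensoring the resulting short exact sequence with $k(y)$ and using $\dim_{k(y)}\mathcal C\otimes k(y)=n$ forces the surjection $k(y)^{\oplus n}\to \mathcal C\otimes k(y)$ to be an isomorphism, so the image of $\mathcal K\otimes k(y)$ in $k(y)^{\oplus n}$ vanishes; equivalently, every local section of $\mathcal K$ has all its coordinates in $\mathfrak m_y\mathcal O_{Y,y}$ for every $y$.

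The hard part — and the only place where reducedness is used — is to deduce $\mathcal K=0$ from this pointwise vanishing. A local section of $\mathcal K$ has each coordinate contained in $\bigcap_y \mathfrak m_y$, which is the nilradical of the corresponding ring; since $Y$ is reduced this intersection is zero, whence $\mathcal K=0$ and $\mathcal C\cong \mathcal O_U^{\oplus n}$ is locally free. This is precisely the standard local-freeness criterion on reduced schemes, and in the final write-up I would either cite it directly (see \cite{stacks-project}) or record the short argument above; the semicontinuity input and Nakayama are the only ancillary facts, both entirely routine.
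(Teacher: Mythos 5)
Your argument is correct and is essentially the paper's own proof: right-exactness of $-\otimes_{\mathcal{O}_Y}k(y)$ gives $\dim_{k(y)}\left(\coker\phi\otimes k(y)\right)=\rk\mathcal{E}'-\rk_y\phi$, from which (1) is immediate, and (2) is the standard Nakayama-based local-freeness criterion over a reduced scheme, which the paper simply cites (Stacks Project, Tag 0FWH). Your only addition is to spell out the proof of that criterion, which is fine and uses reducedness exactly where it must: to conclude that the kernel of $\mathcal{O}_U^{\oplus n}\twoheadrightarrow\mathcal{C}$ vanishes, since its coordinates lie in every prime and hence in the nilradical.
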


\begin{proof}
	Considering the fact that the functor $\cdot\otimes_{\mathcal{O}_Y} k(y)$ is right exact, the first point is clear, and the second point follows from Nakayama's lemma, see for instance \cite[\href{https://stacks.math.columbia.edu/tag/0FWH}{Tag 0FWH}]{stacks-project}.
\end{proof}

\begin{remark}
\label{rem:locally-free-cokernel}
If $\phi: \mathcal{E}\rightarrow \mathcal{E'}$ a morphism of finite locally free sheaves with locally free cokernel then $\im \phi$ and $\ker \phi$ are also locally free. Indeed if $0 \rightarrow \mathcal E''\rightarrow \mathcal{E}\rightarrow \mathcal{E'} \rightarrow 0$ is an exact sequence of quasi-coherent sheaves and $\mathcal E$ and  $\mathcal{E'}$ are finite locally free, the exact sequence splits locally, hence $\mathcal E''$ is finite locally free as well. \end{remark}

\begin{definition}[]
\label{def:subbundle}
If $Y$ is a scheme, $\phi: \mathcal{E}\hookrightarrow \mathcal{E}'$ is a monomorphism of locally free sheaves, we will say that  $\mathcal{E}$ is a \emph{subbundle} of $\mathcal E'$ if $\coker \phi$ is a locally free sheaf.
\end{definition}

\begin{remark}
\label{rem:subbundle}
As in Remark \ref{rem:locally-free-cokernel}, if $\mathcal{E}''\subset\mathcal{E}'\subset \mathcal E$ and $\mathcal{E}'$ is a subbundle of $\mathcal{E}$, then $\mathcal{E}''$ is a subbundle of $\mathcal{E}$ if and only if $\mathcal{E}''$ is a subbundle of $\mathcal{E'}$.
\end{remark}

\begin{lemma}
\label{lem:filtration-of-semisimple}
Let $Y$ be reduced scheme over a field $k$, and $\phi$ be an endomorphism of a finite locally free sheaf  $\mathcal{E}$. Assume that $\mathcal E$ admits a filtration by $\phi$-stable subbundles 
$F_0\mathcal{E} =\mathcal{E}\supset F_1\mathcal{E}\supset \cdots \supset F_N\mathcal{E} \supset F_{N+1}\mathcal{E}=\{0\}$ such that $\phi$ induces $\lambda_i \id$ on $\frac{F_i\mathcal{E}}{F_{i+1}\mathcal{E}}$ for $i=0,\cdots,N$, where the $\lambda_i \in k$ are pairwise distinct. Then:
\begin{enumerate}
	\item $\phi$ is semisimple, that is $\mathcal E= \oplus_{i=0}^N \mathcal E(\lambda_i)$, where $\mathcal E(\lambda_i)$, the eigen-subsheaf associated to $\lambda_i$, is locally free of rank $\rk \frac{F_i\mathcal{E}}{F_{i+1}\mathcal{E}}	$,
	\item for $i=0,\cdots,N$ it holds moreover that $F_i\mathcal{E}=\oplus_{j=i}^N\mathcal E(\lambda_j)$.
\end{enumerate}
\end{lemma}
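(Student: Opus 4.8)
The plan is to produce the eigen-decomposition through spectral idempotents, which are polynomials in $\phi$, rather than through a dimension count on fibres.

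First I would set $P(T)=\prod_{i=0}^N(T-\lambda_i)\in k[T]$ and show $P(\phi)=0$. Since $\phi$ induces $\lambda_i\id$ on $F_i\mathcal E/F_{i+1}\mathcal E$, the endomorphism $\phi-\lambda_i\id$ sends $F_i\mathcal E$ into $F_{i+1}\mathcal E$. The factors $\phi-\lambda_i\id$ commute (they are polynomials in $\phi$), so applying them in increasing order of $i$ carries $F_0\mathcal E=\mathcal E$ successively into $F_1\mathcal E,F_2\mathcal E,\ldots,F_{N+1}\mathcal E=\{0\}$; hence $P(\phi)=0$.

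Next, writing $P_i(T)=\prod_{j\neq i}(T-\lambda_j)$, the hypothesis that the $\lambda_i$ are pairwise distinct gives $\gcd(P_0,\ldots,P_N)=1$ in the principal ideal domain $k[T]$ (a common root would be some $\lambda_j$, but $P_j(\lambda_j)\neq 0$). Choosing $a_i\in k[T]$ with $\sum_i a_iP_i=1$ and setting $\pi_i=a_i(\phi)P_i(\phi)$, the relation $P(\phi)=0$ makes $(\pi_i)$ a complete system of orthogonal $\mathcal O_Y$-linear idempotents: $\sum_i\pi_i=\id$, $\pi_i\pi_j=0$ for $i\neq j$, and $\im\pi_i=\ker(\phi-\lambda_i\id)=\mathcal E(\lambda_i)$ (the first inclusion from $(\phi-\lambda_i\id)\pi_i=a_i(\phi)P(\phi)=0$, the reverse because $\pi_j$ annihilates $\ker(\phi-\lambda_i\id)$ for $j\neq i$). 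This already yields $\mathcal E=\bigoplus_{i=0}^N\mathcal E(\lambda_i)$, and each $\mathcal E(\lambda_i)$, being the image of an idempotent endomorphism of a finite locally free sheaf, is a direct summand of $\mathcal E$ and hence finite locally free. This proves semisimplicity, and I note that it requires neither reducedness of $Y$ nor any fibrewise argument.

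For the second assertion I would compare the filtration with the idempotents. Because $\phi$ acts as $\lambda_l\id$ on $F_l\mathcal E/F_{l+1}\mathcal E$, any $Q(\phi)$ acts there as $Q(\lambda_l)\id$; evaluating shows $\pi_j$ acts as $\delta_{jl}\id$ on this graded piece. Consequently $\Pi_i:=\sum_{j\geq i}\pi_j$ is idempotent, acting as $\id$ on $F_l\mathcal E/F_{l+1}\mathcal E$ for $l\geq i$ and as $0$ for $l<i$. The crux — and the step I expect to be the main obstacle — is to upgrade this ``graded'' information to an honest equality of subsheaves. For this I would argue that $\Pi_i-\id$ is nilpotent on $F_i\mathcal E$ (it strictly lowers the filtration) while $\Pi_i$ is idempotent; from $(\id+\nu)^2=\id+\nu$ with $\nu$ nilpotent one gets $\nu=0$, so $\Pi_i|_{F_i\mathcal E}=\id$ and thus $F_i\mathcal E\subseteq\im\Pi_i$. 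Dually, the induced idempotent on $\mathcal E/F_i\mathcal E$ is nilpotent, hence zero, giving $\im\Pi_i\subseteq F_i\mathcal E$. Therefore $F_i\mathcal E=\im\Pi_i=\bigoplus_{j\geq i}\mathcal E(\lambda_j)$, which is the second assertion. The rank claim then follows formally, since $F_i\mathcal E/F_{i+1}\mathcal E\cong\mathcal E(\lambda_i)$ gives $\rk\mathcal E(\lambda_i)=\rk\bigl(F_i\mathcal E/F_{i+1}\mathcal E\bigr)$.
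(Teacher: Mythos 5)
Your proof is correct, and it takes a genuinely different route from the paper's. The paper proves assertion (1) by a fibrewise rank count: since $P(\phi)=0$ with $P=(t-\lambda_0)\cdots(t-\lambda_N)$ split with distinct roots, the statement holds over a field, so each $\phi-\lambda_i\id$ has locally constant rank $\rk\mathcal E-\rk\frac{F_i\mathcal{E}}{F_{i+1}\mathcal{E}}$; it then invokes Lemma \ref{lem:locally-free-cokernel} and Remark \ref{rem:locally-free-cokernel} --- and this is exactly where the reducedness of $Y$ is used --- to conclude that each $\mathcal E(\lambda_i)$ is a subbundle of the expected rank and that the sum map $\oplus_{i=0}^N\mathcal E(\lambda_i)\to\mathcal E$, being of locally constant rank $\rk\mathcal E$, is an isomorphism. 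Assertion (2) is then deduced from assertion (1) by induction on $N$, using $F_1\mathcal E(\lambda_i)=\mathcal E(\lambda_i)$ for $i\geq 1$. You instead build the spectral projectors $\pi_i=a_i(\phi)P_i(\phi)$ from a Bezout identity in $k[T]$ (valid since the $\lambda_i$ are pairwise distinct), which makes each $\mathcal E(\lambda_i)$ the image of an idempotent, hence automatically a locally free direct summand, with no rank-semicontinuity input at all; and your nilpotent-idempotent trick ($\nu=-\nu^2$ with $\nu$ nilpotent forces $\nu=0$, applied to $\Pi_i-\id$ on $F_i\mathcal E$ and to $\Pi_i$ on $\mathcal E/F_i\mathcal E$) settles assertion (2) directly rather than by induction. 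What your route buys: it eliminates the hypothesis that $Y$ is reduced (and the dependence on Lemma \ref{lem:locally-free-cokernel} entirely), and the projectors, being polynomials in $\phi$, commute with anything commuting with $\phi$, which is a strictly more robust conclusion; the reducedness assumption is harmless in the paper's applications, where the lemma is only applied to strict normal crossings divisors, but your version would survive a non-reduced base. What the paper's route buys: given that Lemma \ref{lem:locally-free-cokernel} and Remark \ref{rem:locally-free-cokernel} were already established for other purposes, its proof is a few lines, and the rank identification in (1) falls out of the fibrewise count immediately, whereas in your argument it arrives last, as a formal consequence of $F_i\mathcal{E}=\mathcal E(\lambda_i)\oplus F_{i+1}\mathcal{E}$.
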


\begin{proof}
	One first observes that the second assertion follows from the first, namely if the first point is true then $F_1 \mathcal E(\lambda_i)=\mathcal E(\lambda_i)$ for $i\geq 1$, hence the second point follows by induction on the length $N$ of the filtration.
	
	To prove the first assertion, one notes that is it valid over the spectrum of a field, as $P=(t-\lambda_0)\cdots(t-\lambda_N)$ is a polynomial such that $P(\phi)=0$. From this, it follows that for $i=0,\cdots,N$, the morphism $\phi-\lambda_i\id $ is of locally constant rank $\rk \mathcal E - \rk \frac{F_i\mathcal{E}}{F_{i+1}\mathcal{E}}	$, hence according Remark \ref{rem:locally-free-cokernel} $\mathcal E(\lambda_i)$ is a subbundle of $\mathcal E$ of rank $ \rk \frac{F_i\mathcal{E}}{F_{i+1}\mathcal{E}}$. One concludes that the morphism $\oplus_{i=0}^N \mathcal E(\lambda_i) \rightarrow \mathcal{E} $ is of locally constant rank $\rk \mathcal{E}$, hence, according to Lemma \ref{lem:locally-free-cokernel}, it is an isomorphism.

\end{proof}
\begin{proof}[Proof of Proposition \ref{prop:parabolic-connection-semisimple-residues}]
Lemma \ref{lem:weight_filtration_cartesian} allows to reduce to the case of a single divisor, that is $\# I=1$, so we can omit indices. Now \cite[Lemme 2.3.11]{bor:rep} shows that $ {\mathcal E_0}_{|D}\supset \frac{\mathcal E_ \frac{1}{r}  }{\mathcal E_0(-D)} \supset \cdots\frac{\mathcal E_ \frac{r-1}{r}  }{\mathcal E_0(-D)}  \supset 0 $ is a filtration by $\mathcal{O}_D$-subbundles, hence Lemma \ref{lem:filtration-of-semisimple} applies, which concludes the proof.
 
\end{proof}

\begin{remark}
\label{rem:parabolic-connection-semisimple-residues}
In view of Proposition \ref{prop:parabolic-connection-semisimple-residues}, it is natural to ask if the functor $\Par\Con_{\frac{\mathbf 1}{\mathbf r}}^{st}(X,\mathbf D) \rightarrow  \Con^{ss}_{\frac{\mathbf 1}{\mathbf r}}(X,\mathbf D)  $ given on objects by $(\mathcal E_\cdot,\nabla_\cdot)\mapsto (\mathcal E_0,\nabla_0) $ is essentially surjective. The answer is negative: namely if one starts from an object $(\mathcal E,\nabla)$ in $\Con^{ss}_{\frac{\mathbf 1}{\mathbf r}}(X,\mathbf D)$, the given condition on the residues is too weak to ensure that the filtration $F^\nabla_\cdot$ is stable by $\nabla$. For an explicit counter-example, one can consider $X= \mathbb{A}^2$, $D=\{0\}\times \mathbb{A}^1$, $r=2$, $\mathcal{E}=\mathcal O_X^{\oplus 2}$, and $\nabla$ given by the matrix of $1$-forms $\Omega= 
\begin{pmatrix}
\frac{1}{2} 	\frac{dx}{x} & 0\\
	dy & 0
\end{pmatrix}$.
Then one easily checks that the corresponding parabolic vector bundle verifies $\mathcal{E}_{ \frac{1}{2}}= \mathcal{O}_X\oplus  \mathcal{O}_X(-D)$, and as $dy\notin \Omega^1_X(\log D)(-D) $, this submodule is not stable by $\nabla$. 
\end{remark}

\subsection{From stacky connections to parabolic connections}%
\label{sub:from_stacky_connections_to_parabolic_connections}

\subsubsection{From stacky logarithmic connections to \preparabolic connections}%
\label{ssub:from_stacky_logarithmic_connections_to_pre_parabolic_connections}

\begin{definition}[]
\label{def:functor_F_enriched}
To each logarithmic connection $(\mathcal F,\nabla)$ on $(\mathfrak X, \mathbf{ \mathfrak{D} })$, one associates a \preparabolic connection $(\widehat{\mathcal F}_\cdot,\widehat{\nabla}_\cdot)$  on $(X,\mathbf D)$ with weights in  $\frac{1}{\mathbf r}\mathbb Z^I$ in the following way:

\begin{itemize}
	\item the underlying parabolic vector bundle $\widehat{\mathcal F}_\cdot$ is the one associated to $ \mathcal{F}$ by Definition \ref{def:functors_F_G},
	\item  if $\mathbf l$ belongs to 
 $\mathbb Z^I$, one defines $\widehat{\nabla}_{\frac{\mathbf l}{\mathbf r}} =\pi_* 
 \left( \nabla ( -\mathbf  l \mathbf{\mathfrak D  })\right)$.

\end{itemize}
\end{definition}

\begin{remark}
\label{rem:functor_F_enriched}

\begin{enumerate}
	\item To define the second part of the data, we have used the natural extension to Deligne-Mumford stacks of the two operations on logarithmic connections met previously for schemes:

\begin{itemize}
	\item the twist of a logarithmic connection on $(\mathfrak X, \mathbf{ \mathfrak{D} })$ by a divisor with support in    $\mathfrak{D}$ (see \ref{lem:canonical_connection}),
	\item the push-forward of a logarithmic connection on $(\mathfrak X, \mathbf{ \mathfrak{D} })$ along the log-étale morphism  $(\pi,(r_i)_{i\in I}): (\mathfrak{X},( \mathfrak{D}_i)_{i\in I}) \rightarrow (X,(D_i)_{i\in I})$ (see 
		Remark \ref{rem:push-forward-logarithmic-connection}). 
\end{itemize}

\item The fact that the $\widehat{\nabla}_{\frac{\mathbf l}{\mathbf r}}$ are compatible with themselves is a consequence of Remark \ref{rem:canonical connection}.

\end{enumerate}

\end{remark}

Our next task is to show the following:
\begin{theorem}[]
\label{thm:car_hol_stacky}
A logarithmic connection on $(\mathcal F,\nabla)$ on $(\mathfrak X, \mathbf{ \mathfrak{D} })$ is holomorphic if and only if the associated  \preparabolic connection $(\widehat{\mathcal F}_\cdot,\widehat{\nabla}_\cdot)$ is \parabolic.

\end{theorem}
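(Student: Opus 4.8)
The plan is to reduce the statement to a single residue computation on $\mathfrak X$, transported down to $X$ via $\pi_*$, exploiting a numerical coincidence: holomorphicity forces $\res_{\mathfrak D}\nabla=0$, twisting by $-l\mathfrak D$ raises the residue to $l\,\id$, and pushing forward along the degree-$r$ cover $\pi$ divides by $r$, landing exactly on the value $\tfrac{l}{r}$ demanded by Definition \ref{def:parabolic_connection}. Since both holomorphicity and the strong parabolic condition are tested divisor by divisor, and the weight filtration is cartesian (Lemma \ref{lem:weight_filtration_cartesian}), I would first reduce, exactly as in the proof of Proposition \ref{prop:parabolic-connection-semisimple-residues}, to the case $\#I=1$, writing $D,r,\mathfrak D$ without indices.

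First I would record that $(\mathcal F,\nabla)$ on $\mathfrak X$ is holomorphic if and only if $\res_{\mathfrak D}\nabla=0$: this is the stacky analogue of Proposition \ref{prop:exact_sequence_residues}, the point being that $\nabla$ factors through $\mathcal F\otimes\Omega^1_{\mathfrak X/k}$ precisely when its composite with the residue map vanishes. Next, the twist formula of Lemma \ref{lem:canonical_connection} (valid on $\mathfrak X$ by \S\ref{ssub:logarithmic_connexions_on_deligne_mumford_stacks}), under the canonical identification of the endomorphism sheaves of $\mathcal F|_{\mathfrak D}$ and $(\mathcal F(-l\mathfrak D))|_{\mathfrak D}$, gives
\[ \res_{\mathfrak D}\big(\nabla(-l\mathfrak D)\big)=\res_{\mathfrak D}(\nabla)+l\,\id . \]
Thus $\nabla$ is holomorphic if and only if $\res_{\mathfrak D}(\nabla(-l\mathfrak D))=l\,\id$ for every $l$.

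The heart of the argument is to compare this residue upstairs with the induced residue of $\widehat{\nabla}_{l/r}=\pi_*(\nabla(-l\mathfrak D))$ on the graded piece $\mathcal E_{l/r}/\mathcal E_{(l+1)/r}$. I would carry this out in an étale chart, where Proposition \ref{prop:sncd_implies_smooth} presents $\mathfrak X$ as $[\mathbb A^1_u/\mu_r]$ with $u^r=x$, a $\mu_r$-equivariant bundle being a $\mathbb Z/r$-graded module and $\pi_*$ the passage to invariants. There $\mathcal O(-l\mathfrak D)$ is the degree-shift by $u^l$, so $\widehat{\mathcal F}_{l/r}$ is the weight-$(-l)$ piece and the successive quotient is identified with the weight-$(-l)$ component of $\mathcal F|_{\mathfrak D}$. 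Since $\tfrac{dx}{x}=r\tfrac{du}{u}$, a residue equal to $c$ upstairs (for $\tfrac{du}{u}$) becomes $\tfrac{c}{r}$ downstairs (for $\tfrac{dx}{x}$); applying this to the $\mu_r$-equivariant endomorphism $\res_{\mathfrak D}(\nabla(-l\mathfrak D))$ weight space by weight space yields
\[ \res_{D}\big(\widehat{\nabla}_{l/r}\big)\big|_{\mathcal E_{l/r}/\mathcal E_{(l+1)/r}}=\tfrac{1}{r}\big(\res_{\mathfrak D}(\nabla)_{-l}+l\,\id\big), \]
where $\res_{\mathfrak D}(\nabla)_{-l}$ denotes the weight-$(-l)$ component of $\res_{\mathfrak D}(\nabla)$.

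Combining the three steps settles both directions at once: the strong parabolic condition asks precisely that the left-hand side equal $\tfrac{l}{r}\id$, which by the displayed identity is equivalent to the vanishing of $\res_{\mathfrak D}(\nabla)_{-l}$. Letting $l$ run over $0,\dots,r-1$ exhausts all $\mu_r$-weights, so the conjunction over all $l$ is equivalent to $\res_{\mathfrak D}(\nabla)=0$, i.e. to holomorphicity. The main obstacle I anticipate is the bookkeeping in the chart computation of the third step: matching the grading convention so that $\widehat{\mathcal F}_{l/r}$ picks out the correct weight, checking that the residue endomorphism upstairs is genuinely $\mu_r$-equivariant (so that it decomposes along weights and each weight feeds exactly one graded quotient), and verifying that taking invariants commutes with restriction to the divisor and with the formation of residues — none of which can be bypassed by a rank-one reduction, precisely because connections need not split locally.
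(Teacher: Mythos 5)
Your proposal is correct in substance, and it reaches the theorem by what is locally the same mechanism as the paper's proof, globally organized differently. The paper splits the two implications: the `only if' direction is done intrinsically via Lemma \ref{lem:morphism_restrictions_compatible_residues} (compatibility of residues with $\pi_*$ up to the factor $1/r_h$ --- your $dx/x=r\,du/u$ computation in global form, together with Lemma \ref{lem:exact_sequence_of_restrictions} identifying the target of $c_h$ with the graded piece), while the `if' direction identifies $\mathfrak D$ with the gerbe of $r$-th roots of the conormal sheaf (Lemma \ref{lem:natural_gerbe_of_roots}) and invokes the representation theory of $\mu_r$-gerbes (Lemma \ref{lem:representation_theory_mu_r_gerbe}, Corollary \ref{cor:representation_theory_mu_r_gerbe}) to pass from the vanishing of all $p_*\left(\res_{\mathfrak D}(\nabla)\otimes\id_{\mathcal N^{\vee\otimes l}}\right)$, $0\le l<r$, to $\res_{\mathfrak D}\nabla=0$. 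Your étale-chart computation --- equivariant bundles as graded modules, $\pi_*$ as invariants, $\widehat{\mathcal F}_{l/r}$ as the weight $(-l)$ piece, the twist formula $\res_{\mathfrak D}(\nabla(-l\mathfrak D))=\res_{\mathfrak D}(\nabla)+l\,\id$ --- is exactly what the proof of Lemma \ref{lem:representation_theory_mu_r_gerbe} reduces to Zariski-locally (graded modules over $B\mu_r$), so your displayed identity and the equivalence ``strongly parabolic for all $l$ iff every weight component of $\res_{\mathfrak D}\nabla$ vanishes'' is a faithful local translation of the paper's argument. What it buys is a single computation settling both implications at once; the cost is the bookkeeping you flag, which is genuinely where the content sits. (Minor point: the local presentation is not $[\mathbb A^1_u/\mu_r]$ but $[\tilde U/\mu_{\mathbf r}]$ for a Kummer cover of an $n$-dimensional étale chart, cf.\ the proof of Proposition \ref{prop:sncd_implies_smooth}; this is harmless since only the root coordinates carry nonzero weight.)

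The one step that does not work as you justify it is the reduction to $\#I=1$. Cartesianness of the weight filtration (Lemma \ref{lem:weight_filtration_cartesian}) reduces statements \emph{about the filtration} to the axes, which is what Proposition \ref{prop:parabolic-connection-semisimple-residues} needs; but in the `if' direction the goal is the vanishing of $\res_{\mathfrak D_h}\nabla$, an assertion about an endomorphism of $\mathcal F|_{\mathfrak D_h}$, and the sheaves $\widehat{\mathcal F}_{\mathbf l/\mathbf r}$ depend jointly on all the divisors, so one cannot simply discard the others. The paper's reduction (\S\ref{ssub:proof_of_the_if_direction_of_theorem_thm_car_hol_stacky}) instead restricts to the open substack $\mathfrak X\setminus\bigcup_{h'\neq h}\mathfrak D_{h'}$ and uses scheme-theoretic density of the complement of a Cartier divisor, so that an endomorphism of a locally free sheaf on $\mathfrak D_h$ vanishing there vanishes everywhere. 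Fortunately your method repairs itself without any reduction: run the chart computation with the full group $\mu_{\mathbf r}=\prod_{i\in I}\mu_{r_i}$ and $\mathbb Z^I$-multigraded modules, so that $\widehat{\mathcal F}_{\mathbf l/\mathbf r}$ is the weight $(-\mathbf l)$ piece and the graded quotient at $D_h$ is the weight $(-\mathbf l)$ component of $\mathcal F|_{\mathfrak D_h}$; letting $\mathbf l$ run over $\prod_{i\in I}\{0,\dots,r_i-1\}$ then exhausts all multi-weights and gives $\res_{\mathfrak D_h}\nabla=0$ for each $h$ directly. With that substitution, your argument is complete and equivalent to the paper's.
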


The proof is postponed until \S \ref{ssub:proof_of_the_only_if_direction_of_theorem_thm_car_hol_stacky} and \S\ref{ssub:proof_of_the_if_direction_of_theorem_thm_car_hol_stacky}.

\subsubsection{Residues and push-forward}%
\label{ssub:residues_and_push_forward}

We now select an index $h\in I$ \footnote{The letter is chosen in order to avoid confusion with the closed immersions.}. There is a canonical commutative diagram: 

\begin{center}
\begin{tikzcd}
\mathfrak D_h \arrow[r, "j_h"] \arrow[d, "p_h"'] & \mathfrak X \arrow[d, "\pi"] \\
D_h \arrow[r, "i_h"']                            & X                           
\end{tikzcd}
\end{center}

\begin{lemma}
\label{lem:exact_sequence_of_restrictions}
Let $\mathcal F$ be a vector bundle on $\mathfrak X$. There is a natural exact sequence:

$$  i_h^*\pi_*(\mathcal F\otimes_{\mathcal O_{ \mathfrak{X} }} \mathcal O_{ \mathfrak{X} }(-\mathfrak D_h)) \rightarrow i_h^*\pi_*\mathcal F\xrightarrow{c_h}  {p_h}_*j_h^*\mathcal F \rightarrow 0$$
\end{lemma}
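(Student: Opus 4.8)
The plan is to obtain the sequence by pushing forward the tautological exact sequence of the divisor $\mathfrak D_h$ and then restricting to $D_h$. Since $\mathcal F$ is locally free, tensoring the tautological short exact sequence $0 \to \mathcal O_{\mathfrak X}(-\mathfrak D_h) \to \mathcal O_{\mathfrak X} \to {j_h}_*\mathcal O_{\mathfrak D_h} \to 0$ with $\mathcal F$ yields a short exact sequence on $\mathfrak X$
$$0 \to \mathcal F \otimes_{\mathcal O_{\mathfrak X}} \mathcal O_{\mathfrak X}(-\mathfrak D_h) \to \mathcal F \to {j_h}_* j_h^*\mathcal F \to 0,$$
whose right-hand arrow is restriction to $\mathfrak D_h$.

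First I would apply $\pi_*$. The key input is that $R^1\pi_*\big(\mathcal F \otimes \mathcal O_{\mathfrak X}(-\mathfrak D_h)\big) = 0$: indeed $\pi:\mathfrak X \to X$ is the coarse moduli space morphism of the root stack, which is a tame stack because each $r_i$ is invertible in $k$, so $\pi_*$ is exact on quasi-coherent sheaves. Hence I obtain a short exact sequence
$$0 \to \pi_*\big(\mathcal F \otimes \mathcal O_{\mathfrak X}(-\mathfrak D_h)\big) \to \pi_*\mathcal F \to \pi_*\big({j_h}_*j_h^*\mathcal F\big) \to 0.$$
Using the commutativity $\pi \circ j_h = i_h \circ p_h$ and functoriality of direct images, I identify $\pi_*\big({j_h}_*j_h^*\mathcal F\big) = {i_h}_*\big({p_h}_*j_h^*\mathcal F\big)$, since $\pi_* {j_h}_* = (\pi j_h)_* = (i_h p_h)_* = {i_h}_* {p_h}_*$.

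Finally I would apply $i_h^*$. Being a pullback, it is right exact, hence preserves cokernels and transforms the previous sequence into the exact sequence
$$i_h^*\pi_*\big(\mathcal F \otimes \mathcal O_{\mathfrak X}(-\mathfrak D_h)\big) \to i_h^*\pi_*\mathcal F \to i_h^*{i_h}_*\big({p_h}_*j_h^*\mathcal F\big) \to 0.$$
Since $i_h$ is a closed immersion, the counit $i_h^*{i_h}_* \to \id$ is an isomorphism on $\mathcal O_{D_h}$-modules, so $i_h^*{i_h}_*\big({p_h}_*j_h^*\mathcal F\big) \simeq {p_h}_*j_h^*\mathcal F$; this identifies the last arrow with the desired map $c_h$ and produces the claimed sequence. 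Naturality of $c_h$ follows from the naturality of each of the above constructions.

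The main obstacle is the exactness of $\pi_*$, equivalently the vanishing of $R^1\pi_*$, which rests entirely on the tameness of the root stack; everything else is formal (right-exactness of the pullback $i_h^*$, functoriality of pushforward, and the closed-immersion identity $i_h^*{i_h}_* \simeq \id$). I would also note that left-exactness is genuinely lost once the merely right-exact functor $i_h^*$ is applied, which is why the statement asserts only exactness at $i_h^*\pi_*\mathcal F$ and surjectivity of $c_h$.
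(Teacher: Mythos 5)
Your proof is correct and follows essentially the same route as the paper's: tensor the tautological sequence of $\mathfrak D_h$ with the locally free $\mathcal F$ (via the projection formula identifying $\mathcal F\otimes_{\mathcal O_{\mathfrak X}}{j_h}_*\mathcal O_{\mathfrak D_h}\simeq {j_h}_*j_h^*\mathcal F$), invoke tameness of the root stack for the exactness of $\pi_*$, then apply the right-exact $i_h^*$ and conclude with $\pi_*{j_h}_*={i_h}_*{p_h}_*$ and $i_h^*{i_h}_*\simeq \id$. Your closing remark on exactly where left-exactness is lost only makes explicit what the paper leaves implicit.
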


\begin{proof}

	Since $ \mathcal{F}$ is locally free, by the projection formula, ${j_h}_*j_h^*\mathcal F \simeq \mathcal{F}\otimes_{ \mathcal{O}_{ \mathfrak{X} } } {j_h}_* \mathcal{O}_{ \mathfrak{D}_h}  $, so there is an exact sequence:

	\[ 0 \rightarrow \mathcal{F}\otimes_{ \mathcal{O}_{ \mathfrak{X} } } \mathcal{O}_{ \mathfrak{X} }(- \mathfrak{D}_h) \rightarrow \mathcal{F} \rightarrow {j_h}_*j_h^*\mathcal F \rightarrow 0 \]
	As the stack of roots is tame (see \cite{AOV:tame}), the functor $\pi_*$ is exact, hence we get a natural exact sequence:

\[ i_h^*\pi_* \left( \mathcal{F}\otimes_{ \mathcal{O}_{ \mathfrak{X} } } \mathcal{O}_{ \mathfrak{X} }(- \mathfrak{D}_h)\right) \rightarrow i_h^*\pi_* \mathcal{F} \rightarrow  i_h^*\pi_*{j_h}_*j_h^*\mathcal F \rightarrow 0 \]
But now $i_h^*\pi_*{j_h}_*j_h^*\mathcal F\simeq i_h^* {i_h}_* {p_h}_* j_h^*\mathcal F \simeq {p_h}_* j_h^*\mathcal F$.
\end{proof}

\begin{lemma}
\label{lem:morphism_restrictions_compatible_residues}
Let $(\mathcal F,\nabla)$ be a logarithmic connection on $(\mathfrak X,\mathfrak D)$. The canonical epimorphism 

$$c_h: i_h^*\pi_*\mathcal F\rightarrow  {p_h}_*j_h^*\mathcal F $$
is compatible with the endomorphisms  $\operatorname{res}_{D_h} (\pi_*\nabla)$ on $i_h^*\pi_*\mathcal F$ and $\frac{1}{r_h} p_* \operatorname{res}_{\mathfrak D_h}(\nabla)$ on ${p_h}_*j_h^*\mathcal F$, in other words:

$$  c_h\circ \operatorname{res}_{D_h} (\pi_*\nabla)= \left(\frac{1}{r_h} p_* \operatorname{res}_{\mathfrak D_h}(\nabla) \right)\circ c_h \; .$$

\end{lemma}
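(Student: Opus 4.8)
The plan is to unwind both residues as composites of the connection with the residue map on logarithmic differentials, and to trace how these composites are related through the exact sequence of Lemma \ref{lem:exact_sequence_of_restrictions}. Since the statement and all the sheaves involved are local on $X$, I would first reduce to a Zariski chart in which $D_h$ has a local equation $x$, working if convenient on the canonical presentation $p_D:T_D\to X$ of \S\ref{ssub:canonical_flat_presentation}. Recall that for a logarithmic connection $(\mathcal G,\nabla')$ along a smooth divisor the associated residue endomorphism arises from the composite $\mathcal G\xrightarrow{\nabla'}\mathcal G\otimes\Omega^1(\log)\xrightarrow{\id\otimes\res}\mathcal G\otimes\mathcal O_{\mathrm{div}}$, which factors through the restriction of $\mathcal G$ to the divisor followed by the $\mathcal O_{\mathrm{div}}$-linear residue endomorphism. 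I would apply this twice: once for $\nabla$ along $\mathfrak D_h$, giving a morphism $R^{\mathfrak X}:\mathcal F\to {j_h}_*j_h^*\mathcal F$ that factors as ${j_h}_*(\res_{\mathfrak D_h}\nabla)$ precomposed with the canonical restriction $\mathcal F\to {j_h}_*j_h^*\mathcal F$; and once for $\pi_*\nabla$ along $D_h$, giving $R^{X}:\pi_*\mathcal F\to {i_h}_*i_h^*\pi_*\mathcal F$ factoring as ${i_h}_*(\res_{D_h}\pi_*\nabla)$ precomposed with the restriction $\pi_*\mathcal F\to {i_h}_*i_h^*\pi_*\mathcal F$.

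The geometric input producing the factor $\frac{1}{r_h}$ is the following compatibility of residue maps. Under the canonical isomorphism $\Omega^1_{\mathfrak X/k}(\log\mathfrak D)\simeq\pi^*\Omega^1_{X/k}(\log D)$ of Lemma \ref{lem:Kummer}, the residue map $\res_{\mathfrak D_h}$ equals $r_h$ times the (base-changed) pullback of $\res_{D_h}$: indeed, if $x$ is a local equation of $D_h$ then $\pi^*x$ is, up to a unit, the $r_h$-th power of a local equation $y$ of $\mathfrak D_h$, so $\pi^*\tfrac{dx}{x}=r_h\tfrac{dy}{y}$ and hence $\res_{\mathfrak D_h}(\pi^*\tfrac{dx}{x})=r_h$ while $\res_{D_h}(\tfrac{dx}{x})=1$. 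Combining this with the projection formula, which is precisely how $\pi_*\nabla$ is built from $\nabla$ (cf. Remark \ref{rem:push-forward-logarithmic-connection}), I would obtain that after applying $\pi_*$ the morphisms $\pi_* R^{\mathfrak X}$ and $R^X$ agree up to the scalar $r_h$ and up to the canonical identification of their targets. Concretely, $\pi_* R^{\mathfrak X}$ takes values in $\pi_*{j_h}_*j_h^*\mathcal F\simeq {i_h}_*{p_h}_*j_h^*\mathcal F$, whereas $R^X$ takes values in ${i_h}_*i_h^*\pi_*\mathcal F$, and these two targets are linked exactly by the ${i_h}_*$-pushforward of the epimorphism $c_h$ of Lemma \ref{lem:exact_sequence_of_restrictions}.

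It then remains to apply the functor $i_h^*$ and read off the residue endomorphisms. Since the restriction $\pi_*\mathcal F\to {i_h}_*i_h^*\pi_*\mathcal F$ is the unit of the $(i_h^*,{i_h}_*)$-adjunction, applying $i_h^*$ to $R^X$ returns exactly $\res_{D_h}(\pi_*\nabla)$ (by the triangle identity $i_h^*(\text{unit})=\id$ and $i_h^*{i_h}_*=\id$); likewise, applying $i_h^*$ to $\pi_* R^{\mathfrak X}$ returns ${p_h}_*(\res_{\mathfrak D_h}\nabla)\circ c_h$, using that $i_h^*$ of the restriction $\pi_*\mathcal F\to\pi_*{j_h}_*j_h^*\mathcal F$ is, by definition of Lemma \ref{lem:exact_sequence_of_restrictions}, the map $c_h$. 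The relation between $R^X$ and $\pi_* R^{\mathfrak X}$ obtained above then yields $c_h\circ\res_{D_h}(\pi_*\nabla)=\frac{1}{r_h}\,{p_h}_*(\res_{\mathfrak D_h}\nabla)\circ c_h$, which is the claim. I expect the main obstacle to be bookkeeping rather than geometric: one must keep the projection-formula isomorphism, the tameness-driven exactness of $\pi_*$ (so that the sequence of Lemma \ref{lem:exact_sequence_of_restrictions} and the map $c_h$ behave well), and the base-change identification $\pi_*{j_h}_*\simeq {i_h}_*{p_h}_*$ rigorously aligned, so that the single scalar $r_h$ lands on the correct side of $c_h$. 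A cross-check in the local chart $[\mathbb A^1/\mu_{r_h}]$, writing $\mathcal F$ as a $\mathbb Z/r_h$-graded module equipped with an equivariant connection, would confirm the placement of the factor.
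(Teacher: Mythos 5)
Your proposal is correct and follows essentially the same route as the paper's proof: both factor each residue endomorphism as the connection followed by the residue map on logarithmic forms, use the definition of $\pi_*\nabla$ (projection formula and Lemma \ref{lem:Kummer}) for the $\nabla$-dependent square, and reduce the remaining $\mathcal O_X$-linear comparison by adjunction to the case $\mathcal F=\mathcal O_{\mathfrak X}$, where the local identity $\frac{d(t_h^{r_h})}{t_h^{r_h}}=r_h\frac{dt_h}{t_h}$ places the factor $r_h$ exactly where you predict. The paper merely packages this as a two-square commutative diagram rather than as your explicit comparison of $R^X$ with $\pi_*R^{\mathfrak X}$.
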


\begin{proof}

This is equivalent to the commutativity of the natural diagram:
	\begin{center}
		\begin{tikzcd}
			\pi_*\mathcal F \arrow[r, "\pi_*\nabla"]                   & \pi_*(\mathcal F\otimes_{\mathcal O_{\mathfrak X}} \Omega_\mathfrak X^1(\log(\mathfrak D))) \arrow[r, "\pi_*(\id\otimes\res)"] & \pi_*{j_h}_*j_h^*\mathcal F={i_h}_*{p_h}_*j_h^*\mathcal F \\
\pi_*\mathcal F \arrow[r, "\pi_*\nabla"'] \arrow[u, "\id"] & \pi_*\mathcal F \otimes_{\mathcal O_X} \Omega^1_X(\log(D)) \isoarrow{u} \arrow[r, "\id\otimes\res"']                        & {i_h}_*i_h^*\pi_*\mathcal F  \arrow[u, "\times r_h"']  
\end{tikzcd}
	\end{center}
	The left-hand square commutes by definition of $\pi_*\nabla$ (Remark \ref{rem:push-forward-logarithmic-connection}). The right-hand square being $\mathcal O_X$-linear and independent of $\nabla$, the commutativity is also easily checked. Namely, by adjunction, it is equivalent to the commutativity of
\begin{center}
\begin{tikzcd}
\mathcal F\otimes_{\mathcal O_{\mathfrak X}} \Omega_\mathfrak X^1(\log(\mathfrak D)) \arrow[r]        & {j_h}_*j_h^*\mathcal F  \arrow[r, "\sim"]                   & \mathcal F \otimes_{\mathcal O_{\mathfrak X}} {j_h}_*\mathcal O_{\mathfrak D_h} \\
\pi^*\pi_*\mathcal F \otimes_{\mathcal O_{\mathfrak X}} \pi^*\Omega^1_X(\log(D))  \arrow[u] \arrow[r] & \pi^*{i_h}_*i_h^*\pi_*\mathcal F \arrow[u, "\times r_h"'] \arrow[r, "\sim"] & \pi^*\pi_*\mathcal F\otimes_{\mathcal O_{\mathfrak X}} \pi^*{i_h}_*\mathcal O_{D_h}  \arrow[u, "\times r_h"']                                                
\end{tikzcd}                                   
\end{center}
so we are reduced to the case where \( \mathcal{F} =  \mathcal{O}_{\mathfrak X} \). But the result is then clear, since $r_h \mathfrak D_h=\pi^* D_h$: if $t_h$ is a local equation of $\mathfrak D_h$, then $s_h=t_h^{r_h}$ is a local equation of $\pi^* D_h$, and $ \frac{ds_h}{s_h} =	r_h \frac{dt_h}{t_h}$, which proves the commutativity.

\end{proof}

\subsubsection{Proof of the `only if' direction of Theorem \ref{thm:car_hol_stacky}}%
\label{ssub:proof_of_the_only_if_direction_of_theorem_thm_car_hol_stacky}

Let  $(\mathcal F,\nabla)$ a holomorphic connection on $(\mathfrak X, \mathbf{ \mathfrak{D} })$, and as usual let 
 $(\widehat{\mathcal F}_\cdot,\widehat{\nabla}_\cdot)$ be the associated  \preparabolic connection.
Let $\mathbf l\in \mathbb Z^I$. We apply Lemma \ref{lem:morphism_restrictions_compatible_residues} to the logarithmic connection $(\mathcal F( -\mathbf  l \mathbf{\mathfrak D  }),\nabla( -\mathbf  l \mathbf{\mathfrak D  }))$.
It shows that the morphism $c_h$ identifies with

\[ i_h^* \widehat{\mathcal F}_{\frac{\mathbf l}{\mathbf r}}\twoheadrightarrow i_h^*\left( \frac{ \widehat{\mathcal F}_{\frac{\mathbf l}{\mathbf r}}}{ \widehat{\mathcal F}_{\frac{\mathbf l+e_h}{\mathbf r}}} \right) \]
So the endomorphism $\operatorname{res}_{D_h} (\widehat{\nabla}_{\frac{\mathbf l}{\mathbf r}})=\operatorname{res}_{D_h} (\pi_*\nabla( -\mathbf  l \mathbf{\mathfrak D  }))$ of the left hand side induces the endomorphism $\frac{1}{r_h} p_* \operatorname{res}_{\mathfrak D_h}(\nabla( -\mathbf  l \mathbf{\mathfrak D  }))$ on the right hand side. But as 
$(\mathcal F,\nabla)$ is holomorphic, Definition \ref{lem:canonical_connection} shows that $\operatorname{res}_{\mathfrak D_h}(\nabla( -\mathbf  l \mathbf{\mathfrak D  }))= l_h \id$. Hence the endomorphism induced by $\operatorname{res}_{D_h} (\widehat{\nabla}_{\frac{\mathbf l}{\mathbf r}})$ is $\frac{l_h}{r_h}\id$. Since this is true for any $\mathbf l\in \mathbb Z^I$ and any $h\in I$, the \preparabolic connection $(\widehat{\mathcal F}_\cdot,\widehat{\nabla}_\cdot)$ is in fact \parabolic.

\subsubsection{Proof of the `if' direction of Theorem \ref{thm:car_hol_stacky}}%
\label{ssub:proof_of_the_if_direction_of_theorem_thm_car_hol_stacky} 

Let  $(\mathcal F,\nabla)$ a logarithmic connection on $(\mathfrak X, \mathbf{ \mathfrak{D} })$, we assume that the associated  \preparabolic connection $(\widehat{\mathcal F}_\cdot,\widehat{\nabla}_\cdot)$ is a \parabolic connection. We have to show that $\res_{\mathfrak D_h} \nabla = 0$ for all $h\in I$. 

We first reduce to the case where $\# I=1$. To do so, we observe more generally that if $\mathfrak D$ is a Cartier divisor on a Deligne-Mumford stack $\mathfrak X$, its complement $\mathfrak U =\mathfrak X \backslash \mathfrak D$ is scheme-theoretically dense (\cite[\href{https://stacks.math.columbia.edu/tag/01RE}{Tag 01RE}]{stacks-project}), that is, if $\iota:\mathfrak U \rightarrow \mathfrak{X}$ is the open immersion, then $ \mathcal{O}_{\mathfrak X} \hookrightarrow {\iota}_* \mathcal{O}_{\mathfrak U}$. From the projection formula, we get that for any locally free sheaf $\mathcal F$ on $ \mathfrak{X}$, the morphism $ \mathcal{F} \rightarrow \iota_*\iota^*  \mathcal{F}$ is injective.

It follows that to prove that $\res_{\mathfrak D_h} \nabla = 0$ as an endomorphism of $ \mathcal{F}_{ |\mathfrak{D}_h}$, one can check it on $\mathfrak D_h \backslash \left(\cup _{h'\in I, h'\neq h} \mathfrak D_h'\cap \mathfrak D_h \right)$ (it follows from the proof of \cite[\href{https://stacks.math.columbia.edu/tag/00NQ}{Tag 00NQ}]{stacks-project} that $\left(\cup _{h'\in I, h'\neq h} \mathfrak D_h'\cap \mathfrak D_h \right)$ is a sncd in $\mathfrak D_h$). 
In order to do so, we can work on $\mathfrak X \backslash \left(\cup _{h'\in I, h'\neq h} \mathfrak D_h' \right)$ (where $ \mathfrak{X}$ stands again for our stack of roots), hence we can assume that $\# I=1$.

So we can forget about indices, and we will thus work with the following notations.

\begin{center}
\begin{tikzcd}
\mathfrak D \arrow[r, "j"] \arrow[d, "p"'] & \mathfrak X \arrow[d, "\pi"] \\
D \arrow[r, "i"']                            & X                           
\end{tikzcd}
\end{center}
It is clear that $\mathfrak D \rightarrow D$ is a $\mu_r$-gerbe, but we can actually say a bit more.

\begin{definition}[]
\label{def:gerbe_of_roots}

If $S$ a scheme, and $\mathcal L$ is an invertible sheaf on $S$, we denote by $\sqrt[r]{\mathcal L/S}$ the \emph{gerbe of $r$-th roots of $ \mathcal{L}$}, that is, the gerbe whose objects over $f:S'\to S$ are invertible sheaves $\mathcal M$ on $S'$ endowed with an isomorphism $f^* \mathcal{L}\simeq \mathcal{M}^{\otimes r}$. 
\end{definition}

\begin{lemma}
\label{lem:natural_gerbe_of_roots}

Let $X$ be a scheme, $D$ an effective Cartier divisor on $X$, $r\geq 1$ an integer, and $ \mathfrak{X}=\sqrt[r]{D/X}$. We denote by $ \mathfrak{D}$ the canonical Cartier divisor on $ \mathfrak{X}$ and by $ \mathcal{N}_D= \mathcal{O}_X(D)_{|D}$ and $ \mathcal{N}_{ \mathfrak{D}}= \mathcal{O}_X( \mathfrak{D})_{| \mathfrak{D} }$ the conormal sheaves. Then there is a canonical $D$-isomorphism $ \mathfrak{D}\simeq \sqrt[r]{\mathcal N_D/D}$ sending the canonical $r$-th root of $ \mathcal{N}_D$ to $ \mathcal{N}_{ \mathfrak{D}}$.

\end{lemma}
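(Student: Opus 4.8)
The plan is to prove the lemma by directly comparing the two moduli problems through their functors of points over $D$, using Definition \ref{def:stack_of_roots} and Definition \ref{def:gerbe_of_roots}. Write $(\mathcal{L},s)=(\mathcal O_X(D),s_D)$ for the generalized Cartier divisor defining $\mathfrak X=\sqrt[r]{D/X}$, and let $(\mathcal M,t)$ denote the canonical $r$-th root living on $\mathfrak X$, so that $\mathcal O_{\mathfrak X}(\mathfrak D)=\mathcal M$, the canonical section cutting out $\mathfrak D$ is $t$, and there is a tautological isomorphism $\pi^*\mathcal L\simeq \mathcal M^{\otimes r}$ carrying $\pi^*s$ to $t^{\otimes r}$. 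The key structural fact I will use is that $\mathfrak D$ is the zero locus $V(t)$, so that a morphism $g:T\to\mathfrak X$ factors through $\mathfrak D$ exactly when the pulled-back section $g^*t$ vanishes.

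First I would unwind what an object of $\mathfrak D$ over a scheme $T$ is. By the above, it is an $r$-th root $(\mathcal M_T,t_T,\phi_T\colon \mathcal L_T\simeq \mathcal M_T^{\otimes r})$ of $(\mathcal L_T,s_T)$ subject to the condition $t_T=0$. Since $\phi_T$ is an isomorphism sending $s_T$ to $t_T^{\otimes r}=0$, the vanishing of $t_T$ forces $s_T=0$, hence the structure morphism $T\to X$ factors uniquely through $D=V(s)$; this is exactly what produces the morphism $\mathfrak D\to D$. Conversely, given $f\colon T\to D$ we have $\mathcal L_T=f^*(\mathcal L_{|D})=f^*\mathcal N_D$, and, the section being forced to vanish, the only remaining datum is an invertible sheaf $\mathcal M_T$ together with an isomorphism $f^*\mathcal N_D\simeq \mathcal M_T^{\otimes r}$. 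By Definition \ref{def:gerbe_of_roots} this is precisely an object of $\sqrt[r]{\mathcal N_D/D}$ over $f$, and morphisms match on both sides (since all sections are zero, a morphism in either fibered category is just an isomorphism of the invertible sheaves commuting with the structural isomorphisms $\phi_T$).

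This assignment is manifestly compatible with pullbacks, so it defines an equivalence of categories fibered over $D$, hence a $D$-isomorphism $\mathfrak D\simeq \sqrt[r]{\mathcal N_D/D}$; checking the $2$-functoriality and cocycle compatibilities is routine bookkeeping that I would not spell out. To match the distinguished line bundles, I would note that the canonical $r$-th root of $\mathcal N_D$ on $\sqrt[r]{\mathcal N_D/D}$ is the universal invertible sheaf, which corresponds under the equivalence to the restriction of the universal $\mathcal M$ on $\mathfrak X$; since $\mathcal N_{\mathfrak D}=\mathcal O_{\mathfrak X}(\mathfrak D)_{|\mathfrak D}=\mathcal M_{|\mathfrak D}$, the canonical root is sent to $\mathcal N_{\mathfrak D}$, as required.

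The one genuinely delicate point, and the step I expect to demand the most care, is the implication $t_T=0 \Rightarrow s_T=0 \Rightarrow (T\to X$ factors through $D)$: this is where the scheme-theoretic structure of $\mathfrak D$ as the vanishing scheme $V(t)$, rather than merely its reduced substack, is essential, and it rests on the universal property of the zero scheme of a section of a line bundle together with the identification $\mathcal L_{|D}=\mathcal N_D$ obtained by restricting $\mathcal O_X(D)$ to $D$. Once this is handled cleanly, the remainder is formal unwinding of the two moduli descriptions.
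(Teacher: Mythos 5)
Your proof is correct, and it takes a genuinely different route from the paper's. The paper's proof is a purely formal base-change argument: it observes that the zero-section closed immersion $B\mathbb G_m \hookrightarrow [\mathbb A^1/\mathbb G_m]$, $\mathcal L \mapsto (\mathcal L,0)$, pulls back along $(\mathcal O_X(D),s_D)\colon X \to [\mathbb A^1/\mathbb G_m]$ to $i\colon D\hookrightarrow X$, and likewise along the classifying map of the universal root $(\mathcal M,t)$ to $\mathfrak D \hookrightarrow \mathfrak X$; assembling these with the cartesian square defining $\mathfrak X$ (Definition \ref{def:stack_of_roots}) into a commutative cube, three faces are cartesian, hence so is the back face, which says exactly $\mathfrak D \simeq D\times_{B\mathbb G_m,\times r} B\mathbb G_m = \sqrt[r]{\mathcal N_D/D}$, with the matching of canonical roots read off the edges (the maps $D\to B\mathbb G_m$ and $\mathfrak D \to B\mathbb G_m$ classify $\mathcal N_D$ and $\mathcal N_{\mathfrak D}$ respectively). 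You instead unwind $T$-points of both sides by hand. The two arguments rest on the same structural input, which you correctly isolate as the delicate point: $D$ and $\mathfrak D$ carry the scheme-theoretic zero-locus structure, so a morphism factors through them if and only if the relevant section pulls back to zero --- in the paper this is precisely the cartesianness of the top and bottom faces of the cube. What the cube argument buys is that the $2$-categorical bookkeeping you dismiss as ``routine'' (compatibility with pullback, cocycle conditions, functoriality of the equivalence) is absorbed automatically by stability of cartesian squares under pasting, and the canonical-root statement comes for free. What your version buys is transparency: it exhibits the fibered categories concretely, and it makes explicit a step the formal argument hides, namely that for an object of $\mathfrak D(T)$ the vanishing $t_T=0$ forces $s_T=0$ through the isomorphism $\phi_T$, whereas conversely $s_T=0$ alone only yields $t_T^{\otimes r}=0$ --- which is why one must define the comparison on $\mathfrak D(T)$ (objects of $\mathfrak X(T)$ with $t_T=0$) rather than on arbitrary roots lying over $D$, and why, as you note, the reduced substack would not do. Your handling of this nilpotent subtlety is exactly right, so no gap remains.
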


\begin{proof}

	First note that the pull-back of the closed immersion $B \mathbb{G}_m \hookrightarrow [\mathbb A^1| \mathbb G_m]$  defined by $\mathcal L \mapsto ( \mathcal{L},0)$ by $(\mathcal O_X(D),s_D): X \rightarrow [\mathbb A^1| \mathbb G_m]$ is just $i:D \hookrightarrow X$, in other words in the following natural commutative diagram 

	\begin{center}
		\begin{tikzcd}
                                                                                                         & \mathfrak D \arrow[dd] \arrow[ld] \arrow[rr] &                                                     & B\mathbb G_m \arrow[dd] \arrow[ld] \\
\mathfrak X \arrow[dd] \arrow[rr] &                                                          & {[\mathbb A^1| \mathbb G_m]} \arrow[dd] &                                                \\
                                                                                                         & D \arrow[rr] \arrow[ld]                            &                                                     & B\mathbb G_m \arrow[ld]                        \\
X \arrow[rr]                                                                 &                                                          & {[\mathbb A^1| \mathbb G_m]}                        &                                               
\end{tikzcd}

	\end{center}
 the bottom face is cartesian. For the same reason the top face is also cartesian, and since the front face is cartesian by Definition \ref{def:stack_of_roots}, the back face is cartesian as well.

\end{proof}

This description of \(p: \mathcal{D} \rightarrow D \) is useful as the representation theory of \( \mathbb{\mu}_r \)-gerbes associated to an invertible sheaf is fairly simple:

\begin{lemma}
\label{lem:representation_theory_mu_r_gerbe}
Let $S$ be a scheme, $\mathcal L$ be an invertible sheaf on $S$, $r\geq 1$ be an integer, and  $p: \mathfrak G=\sqrt[r]{\mathcal L/S} \rightarrow S$ be the associated  \( \mathbb{\mu}_r \)-gerbe. We denote by $ \mathcal{N}$ the canonical $r$-th root of $\mathcal L$ on \( \mathfrak{G} \). If $ \mathcal{G}$ is a locally free sheaf on  \( \mathfrak{G} \), then the canonical morphism:

\[  \bigoplus_{l=0}^{r-1} p^*p_* \left( \mathcal G \otimes_{ \mathcal{O}_{ \mathfrak{G} } } \mathcal N^{\vee\otimes l} \right)\otimes_{ \mathcal{O}_{ \mathfrak{G} } }  \mathcal N^{\otimes l} \rightarrow \mathcal{G}\]
is an isomorphism.
\end{lemma}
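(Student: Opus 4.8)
The plan is to exploit the fact that $p:\mathfrak G\to S$ is banded by the diagonalizable group $\mu_r$, so that every quasi-coherent sheaf on $\mathfrak G$ carries a canonical $\mathbb Z/r\mathbb Z$-grading into isotypic components for the characters of $\mu_r$, and to show that the displayed morphism is nothing but the associated decomposition of $\mathcal G$. First I would reduce to the split gerbe: since $\mathcal L$ is invertible it is Zariski-locally trivial on $S$, and over an open $U\subset S$ on which $\mathcal L_{|U}\simeq \mathcal O_U$ the chosen trivialization furnishes the tautological $r$-th root $\mathcal O_U$, so $\mathfrak G_{|U}\simeq B_U\mu_r$ and $\mathcal N$ restricts to the weight-$1$ line bundle. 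As being an isomorphism is local on $\mathfrak G$ and the $\mathfrak G_{|U}$ cover $\mathfrak G$, and as all the operations involved ($p^*$, $p_*$, the tensor products, and the canonical map) commute with the flat base change $U\to S$ — here using that $p_*$ is pushforward along a $\mu_r$-gerbe, hence exact and equal to taking $\mu_r$-invariants — it suffices to treat the split case.

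On $B_U\mu_r$, quasi-coherent sheaves are the same as $\mu_r$-representations over $\mathcal O_U$, that is, $\mathbb Z/r\mathbb Z$-graded $\mathcal O_U$-modules, and since $\mu_r$ is diagonalizable every such sheaf $\mathcal G$ decomposes canonically as $\mathcal G=\bigoplus_{l\in \mathbb Z/r\mathbb Z}\mathcal G_l$ into its weight components (this decomposition requires no invertibility hypothesis on $r$, consistently with the statement). The canonical root $\mathcal N$ is of pure weight $1$, so $\mathcal N^{\otimes l}$ is of pure weight $l$ and tensoring by it shifts the grading by $l$.

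Next I would identify $p^*p_*$ with the projection onto the weight-$0$ component: the functor $p_*$ sends $\mathcal G$ to its weight-$0$ part $\mathcal G_0$ (the invariants), and $p^*$ brings a sheaf from $S$ back as a weight-$0$ sheaf, so that $p^*p_*\mathcal G=\mathcal G_0$ and $p^*p_*$ is the identity on weight-$0$ sheaves. Applying this to $\mathcal G\otimes \mathcal N^{\vee\otimes l}$, whose weight-$0$ part is exactly $\mathcal G_l\otimes \mathcal N^{\vee\otimes l}$, yields $p^*p_*(\mathcal G\otimes \mathcal N^{\vee\otimes l})\simeq \mathcal G_l\otimes \mathcal N^{\vee\otimes l}$, and tensoring back by $\mathcal N^{\otimes l}$ recovers $\mathcal G_l$. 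Under these identifications the counit $p^*p_*\to \id$ makes the $l$-th summand of the morphism in the statement the inclusion $\mathcal G_l\hookrightarrow \mathcal G$, so the total map is precisely the decomposition isomorphism $\bigoplus_{l=0}^{r-1}\mathcal G_l\xrightarrow{\sim}\mathcal G$, which finishes the proof.

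The main obstacle is conceptual rather than computational: one must set up the weight decomposition on the (a priori non-split) gerbe and the identity $p^*p_*=(\,\cdot\,)_0$ cleanly, together with their compatibility with flat base change on $S$ and with tensoring by $\mathcal N^{\otimes l}$. Once $\mathfrak G$ has been recognised as Zariski-locally split and the representation theory of the diagonalizable group $\mu_r$ has been invoked, the remaining points — that $\mathcal N$ has weight $1$, that the weight-$0$ part of $\mathcal G\otimes\mathcal N^{\vee\otimes l}$ is $\mathcal G_l\otimes\mathcal N^{\vee\otimes l}$, and that the canonical morphism is the sum of the inclusions — are formal.
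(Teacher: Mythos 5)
Your proof is correct and follows essentially the same route as the paper's: both reduce Zariski-locally on $S$ to the split case $\mathfrak G\simeq B\mu_r$ using the local triviality of $\mathcal L$, invoke the equivalence between quasi-coherent sheaves on $B\mu_r$ and $\mathbb Z/r\mathbb Z$-graded modules (valid with no invertibility hypothesis on $r$, since $\mu_r$ is diagonalizable), and identify the canonical morphism with the grading decomposition $\bigoplus_{l}\mathcal G_l\xrightarrow{\sim}\mathcal G$, with $p^*p_*$ acting as projection onto the weight-$0$ part. Your explicit treatment of the base-change compatibilities and of the counit is a slightly more careful write-up of the same argument, which the paper compresses into the graded-module identity $\bigoplus_{l=0}^{r-1}(M\otimes_R R[-l])_0\otimes_R R[l]\simeq M$.
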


\begin{proof}
	This property is Zariski-local on $S$, so we can assume $S= \Spec(R)$ and that $\mathcal L$ admits a $r$-th root on $S$, in other words there is a $S$-isomorphism $B_S   \mathbb{\mu}_r \simeq \mathfrak{G}$. But now the category of quasi-coherent sheaves on \( \mathfrak{G} \) is equivalent to the category of \( \frac{ \mathbb{Z}}{r}  \)-graded $R$-modules, and the isomorphism boils down to the following obvious isomorphism 
	\[ \bigoplus_{l=0}^{r-1} \left(M \otimes_{R} R[-l]\right)_0 \otimes_{R} R[l] \simeq M \; .\]
	Namely, as we assume that $M$ is locally free, we have that $M \otimes_{R} R[-l] \simeq \Hom(R[l],M)=M_{-l}$, hence the result.
\end{proof}

\begin{corollary}
\label{cor:representation_theory_mu_r_gerbe}
With notations of Lemma \ref{lem:representation_theory_mu_r_gerbe}, if $\phi: \mathcal{G} \rightarrow \mathcal{G'}$ is a morphism between two locally free sheaves on \( \mathfrak{G} \), then \( \phi=0 \) if and only if 

\[ p_* \left( \phi \otimes \id_{\mathcal N^{\vee\otimes l}} \right) =0 \]
for $l\in \{0,\cdots, r-1\}$.
\end{corollary}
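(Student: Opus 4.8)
The `only if' direction is immediate: if $\phi = 0$ then $\phi \otimes \id_{\mathcal N^{\vee \otimes l}} = 0$ for every $l$, and applying the functor $p_*$ to a zero morphism yields zero. All the content is therefore in the converse, and the plan is to reduce it entirely to the \emph{naturality} of the canonical isomorphism furnished by Lemma \ref{lem:representation_theory_mu_r_gerbe}.

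First I would record that the isomorphism of Lemma \ref{lem:representation_theory_mu_r_gerbe} is natural in the locally free sheaf. Indeed, for each $l$ the $l$-th component of the canonical morphism is built from the counit $p^* p_* (\mathcal G \otimes \mathcal N^{\vee \otimes l}) \to \mathcal G \otimes \mathcal N^{\vee \otimes l}$ of the adjunction $(p^*, p_*)$, tensored with $\mathcal N^{\otimes l}$ and composed with the evaluation pairing $\mathcal N^{\vee \otimes l} \otimes \mathcal N^{\otimes l} \simeq \mathcal O_{\mathfrak G}$. Since the counit is a natural transformation and the evaluation pairing is natural, a morphism $\phi : \mathcal G \to \mathcal G'$ induces, for each $l$, a commutative square relating the two canonical maps; here the relevant instance of the counit is applied to $g = \phi \otimes \id_{\mathcal N^{\vee \otimes l}}$. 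Summing over $l$, and using the identifications of Lemma \ref{lem:representation_theory_mu_r_gerbe} for $\mathcal G$ and $\mathcal G'$, the morphism $\phi$ is carried to the diagonal morphism
\[ \bigoplus_{l=0}^{r-1} p^* p_* \left( \phi \otimes \id_{\mathcal N^{\vee \otimes l}} \right) \otimes \id_{\mathcal N^{\otimes l}} \; . \]

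It then suffices to observe that this diagonal morphism vanishes if and only if each of its summands does, and that a summand $p^* p_* (\phi \otimes \id_{\mathcal N^{\vee \otimes l}}) \otimes \id_{\mathcal N^{\otimes l}}$ is zero as soon as $p_*(\phi \otimes \id_{\mathcal N^{\vee \otimes l}}) = 0$, because $p^*$ sends a zero morphism to a zero morphism and tensoring with the invertible sheaf $\mathcal N^{\otimes l}$ preserves this. Hence the hypothesis $p_*(\phi \otimes \id_{\mathcal N^{\vee \otimes l}}) = 0$ for all $l$ forces the whole diagonal morphism, and therefore $\phi$ itself, to vanish.

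The only genuine point to verify is the naturality claim of the second paragraph; everything else is formal. As the canonical morphism of Lemma \ref{lem:representation_theory_mu_r_gerbe} is assembled from the counit of an adjunction and from a canonical evaluation pairing — both natural — the verification is routine, and if desired it may be checked after the Zariski localization and trivialization $S = \Spec R$, $\mathfrak G \simeq B_S \mu_r$ used in the proof of that lemma, where the statement reduces to the evident fact that a morphism of $\frac{\mathbb Z}{r}$-graded $R$-modules is zero precisely when all its graded components are.
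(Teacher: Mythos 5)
Your proof is correct and matches the paper's argument: the paper simply declares the corollary ``a direct consequence of Lemma \ref{lem:representation_theory_mu_r_gerbe}'', and the naturality-of-the-canonical-isomorphism argument you spell out is precisely the intended content of that remark. Your explicit identification of $\phi$ with $\bigoplus_{l=0}^{r-1} p^*p_*\left(\phi\otimes\id_{\mathcal N^{\vee\otimes l}}\right)\otimes\id_{\mathcal N^{\otimes l}}$ is the correct and standard way to fill in the details.
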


\begin{proof}
This is a direct consequence of Lemma \ref{lem:representation_theory_mu_r_gerbe}.
\end{proof}

We can now end the proof of Theorem \ref{thm:car_hol_stacky}.

Let  $(\mathcal F,\nabla)$ a logarithmic connection on $(\mathfrak X, \mathbf{ \mathfrak{D} })$, we assume that the associated  \preparabolic connection is a \parabolic connection. As we have seen in \S \ref{ssub:proof_of_the_only_if_direction_of_theorem_thm_car_hol_stacky}, this assumption implies that, for each $l\in \mathbb{Z}$, the residue $\res_D(\pi_*(\nabla (-l \mathfrak{D}))$ induces $ \frac{l}{r}\id $ on the right-hand side of the epimorphism

\[ c: i^*\pi_*\mathcal F(-l \mathfrak{D}) \rightarrow  p_*j^*\mathcal F(-l \mathfrak{D})
\; .	\]
From this and Lemma \ref{lem:morphism_restrictions_compatible_residues}, it follows that 
$ p_* \left(\res_{ \mathfrak{D}} (\nabla (-l \mathfrak{D}))\right) = l\id $.
But since  $\res_{ \mathfrak{D}} (\nabla (-l \mathfrak{D}))= \res_{ \mathfrak{D}} (\nabla)\otimes \id_{\mathcal N_D^{\vee\otimes l}} +l\id $, 
we get that $p_*\left( \res_{ \mathfrak{D}} (\nabla)\otimes \id_{\mathcal N_D^{\vee\otimes l}} \right)=0$. Since this is true for any $l\in \mathbb{Z}$, Lemma \ref{lem:natural_gerbe_of_roots} and Corollary \ref{cor:representation_theory_mu_r_gerbe} enable to conclude that $\res_{\mathfrak D} \nabla = 0$.

\subsection{From parabolic connections to stacky connections}%
\label{sub:from_parabolic_connections_to_stacky_connections}

\subsubsection{From \preparabolic connections to logarithmic stacky connections}%
\label{ssub:from_pre_parabolic_connections_to_logarithmic_stacky_connections}

\begin{lemma}
\label{lem:connection_on_colimit}
Let \( \mathfrak{X} \) be a Deligne-Mumford stack over \( k \), and let \( \mathfrak{D} \) be a sncd divisor. Let 
\( (\mathcal{F}_\cdot,\nabla_\cdot): J \rightarrow \Con( \mathfrak{X},\mathfrak{D})  \) be a diagram such that the colimit \( \mathcal{F}=\varinjlim_J \mathcal{F}_j \) exists in \( \Vect ( \mathfrak{X}) \).  Then there exists a unique logarithmic connection $\nabla$ on $\mathcal{F}$ such that for each $j \in J$ the morphism $ \mathcal{F}_j \rightarrow \mathcal{F}$ is compatible with $\nabla_j$ and $\nabla$.

\end{lemma}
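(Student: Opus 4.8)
The plan is to transfer the problem, which concerns $k$-linear connections, into the $\mathcal{O}_{\mathfrak{X}}$-linear world of splittings of the logarithmic Atiyah exact sequence, where the universal property of the colimit applies directly. Throughout, write $c_j\colon \mathcal{F}_j\to \mathcal{F}$ for the structure maps of the colimit, and recall that the transition maps of the diagram are, by hypothesis, morphisms in $\Con(\mathfrak{X},\mathfrak{D})$, hence compatible with the $\nabla_\cdot$.

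I would first dispose of uniqueness, which is the easy part. If $\nabla$ and $\nabla'$ are two logarithmic connections on $\mathcal{F}$ each compatible with every $\nabla_j$, then their difference $\nabla-\nabla'\colon \mathcal{F}\to \mathcal{F}\otimes_{\mathcal{O}_{\mathfrak{X}}}\Omega^1_{\mathfrak{X}/k}(\log\mathfrak{D})$ is $\mathcal{O}_{\mathfrak{X}}$-linear and satisfies $(\nabla-\nabla')\circ c_j=0$ for all $j$. Since $\mathcal{F}\otimes_{\mathcal{O}_{\mathfrak{X}}}\Omega^1_{\mathfrak{X}/k}(\log\mathfrak{D})$ is again a vector bundle, the universal property of $\mathcal{F}=\varinjlim_J\mathcal{F}_j$ forces $\nabla-\nabla'=0$. (This uniqueness also shows that, if desired, the construction of $\nabla$ may be carried out étale-locally and glued, reducing to a scheme where Corollary~\ref{cor:sections-logarithmic-Atiyah-extension} applies verbatim; but it is cleaner to argue directly on $\mathfrak{X}$.)

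For existence I would use the logarithmic principal parts functor $P^1_{(\mathfrak{X},\mathfrak{D})/k}$ and the correspondence of Corollary~\ref{cor:sections-logarithmic-Atiyah-extension}, which, as noted at the end of \S\ref{ssub:logarithmic_connexions_on_deligne_mumford_stacks}, holds in the present stacky context. Each $\nabla_j$ corresponds to an $\mathcal{O}_{\mathfrak{X}}$-linear splitting $s_j\colon \mathcal{F}_j\to P^1_{(\mathfrak{X},\mathfrak{D})/k}(\mathcal{F}_j)$, and the functoriality of this correspondence (Lemma~\ref{lem:sections-logarithmic-Atiyah-extension}) says precisely that the $s_j$ are compatible with the transition maps under $P^1_{(\mathfrak{X},\mathfrak{D})/k}$. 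Setting
\[
t_j := P^1_{(\mathfrak{X},\mathfrak{D})/k}(c_j)\circ s_j\colon \mathcal{F}_j \longrightarrow P^1_{(\mathfrak{X},\mathfrak{D})/k}(\mathcal{F}),
\]
functoriality of $P^1_{(\mathfrak{X},\mathfrak{D})/k}$ together with $c_{j'}\circ\phi=c_j$ for each transition map $\phi$ shows that $(t_j)_{j\in J}$ is an $\mathcal{O}_{\mathfrak{X}}$-linear cocone under the diagram with target the vector bundle $P^1_{(\mathfrak{X},\mathfrak{D})/k}(\mathcal{F})$. The universal property of the colimit then yields a unique $\mathcal{O}_{\mathfrak{X}}$-linear map $s\colon \mathcal{F}\to P^1_{(\mathfrak{X},\mathfrak{D})/k}(\mathcal{F})$ with $s\circ c_j=t_j$ for all $j$.

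It remains to check that $s$ is a splitting and induces the desired connection. Writing $q$ for the natural projection $P^1_{(\mathfrak{X},\mathfrak{D})/k}(-)\to(-)$, naturality gives $q_{\mathcal{F}}\circ P^1_{(\mathfrak{X},\mathfrak{D})/k}(c_j)=c_j\circ q_{\mathcal{F}_j}$, and since $q_{\mathcal{F}_j}\circ s_j=\id$ one gets $q_{\mathcal{F}}\circ s\circ c_j=c_j$ for all $j$; by uniqueness in the universal property this gives $q_{\mathcal{F}}\circ s=\id_{\mathcal{F}}$. Thus $s$ is an $\mathcal{O}_{\mathfrak{X}}$-linear splitting and corresponds, via Corollary~\ref{cor:sections-logarithmic-Atiyah-extension}, to a logarithmic connection $\nabla$ on $\mathcal{F}$; the identity $s\circ c_j=P^1_{(\mathfrak{X},\mathfrak{D})/k}(c_j)\circ s_j$ is exactly the assertion that each $c_j$ is a morphism of connections, giving the required compatibility. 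The main obstacle here is conceptual rather than computational: a connection is only $k$-linear, so the universal property of the colimit, which governs $\mathcal{O}_{\mathfrak{X}}$-linear maps, cannot be applied to the $\nabla_\cdot$ directly. Passing to the logarithmic Atiyah exact sequence, whose splittings are genuinely $\mathcal{O}_{\mathfrak{X}}$-linear, is the device that turns the construction into a purely $\mathcal{O}$-linear colimit argument.
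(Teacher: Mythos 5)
Your proof is correct and follows essentially the same route as the paper: both transfer the $k$-linear problem into the $\mathcal{O}_{\mathfrak{X}}$-linear world via the (stacky) logarithmic Atiyah exact sequence of Corollary~\ref{cor:sections-logarithmic-Atiyah-extension} and then invoke the universal property of the colimit. The paper's version asserts that the natural map $\varinjlim_J P^1_{(\mathfrak X,\mathfrak D)/k}(\mathcal F_j)\to P^1_{(\mathfrak X,\mathfrak D)/k}(\mathcal F)$ is an isomorphism and leaves the rest implicit, whereas you factor the cocone $P^1(c_j)\circ s_j$ directly through $\mathcal{F}$ — a slightly more careful execution of the same idea, which usefully spells out the verification that the resulting section splits the sequence and gives the compatibility.
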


\begin{proof}

	This follows from (the stacky version of) Corollary \ref{cor:sections-logarithmic-Atiyah-extension}. Namely as  $\varinjlim_J \mathcal F_j\otimes_{\mathcal O_{\mathfrak X}}\Omega_{\mathfrak X/k}^1(\log(\mathfrak D))\simeq \mathcal F\otimes_{\mathcal O_{\mathfrak X}}\Omega_{\mathfrak X/k}^1(\log(\mathfrak D))$, the natural morphism  $$\varinjlim_J P^1_{(X,D)/k}( \mathcal{F}_j) \rightarrow P^1_{(X,D)/k}(\mathcal F)$$ is an isomorphism as well.
\end{proof}

\begin{remark}
\label{rem:connection_on_colimit}
As coends are a special type of colimits (see \cite[IX \S5 Proposition 1]{MCL:categories}), Lemma \ref{lem:connection_on_colimit} holds if we change \( (\mathcal{F}_\cdot,\nabla_\cdot): J \rightarrow \Con( \mathfrak{X},\mathfrak{D})  \) by a functor of mixed variance \( (\mathcal{F}_{\cdot,\cdot} ,\nabla_{\cdot,\cdot}): J^{op}\times J \rightarrow \Con( \mathfrak{X},\mathfrak{D}) \) and the colimit by the coend  \( \mathcal{F}=\int^J \mathcal{F}_{j,j} \).
\end{remark}

To a \preparabolic connection $(\mathcal E_\cdot,\nabla_\cdot)$  is associated a functor of mixed variance:

\begin{center}
	\begin{tikzcd}
	\left(\frac{1}{\mathbf r}\mathbb Z^I\right)^{op}\times \frac{1}{\mathbf r}\mathbb Z^I \arrow[rr]       &  & \Con( \mathfrak{X},\mathfrak{D})  \\
	( \frac{ \mathbf{l} }{ \mathbf{r}}  , \frac{ \mathbf{l'} }{ \mathbf{r'}} )   	\arrow[rr, maps to] &  & \left(\pi^*\mathcal E_{\frac{ \mathbf{l} }{ \mathbf{r}} } \otimes \mathcal O_{\mathfrak X}( \mathbf{l'} \mathbf{\mathfrak D )}, \pi^* \nabla_{\frac{ \mathbf{l} }{ \mathbf{r}} }( \mathbf{l'} \mathbf{\mathfrak D} )  \right) \; .
\end{tikzcd}
\end{center}
Now Definition \ref{def:functors_F_G} and  Remark \ref{rem:connection_on_colimit} show that the following definition makes sense: 

\begin{definition}[]
\label{def:connection_on_coend}
Let $(\mathcal E_\cdot,\nabla_\cdot)$ be an object in $\Par\Con_{\frac{1}{\mathbf r}}(X,\mathbf D)$. We will denote by $\widehat{\nabla_\cdot}$ the unique connection on the vector bundle $\widehat{\mathcal E_\cdot}=\int^{\frac{1}{\mathbf r}\mathbb Z^I}\pi^*\mathcal E_\cdot \otimes \mathcal O_{\mathfrak X}( \cdot  \mathbf r \mathbf{\mathfrak D ) } $
compatible with the given connections on each of the term of the coend.
\end{definition}

\subsubsection{The tensor equivalence}%
\label{ssub:the_tensor_equivalence}

We first explain how to endow  $\Par\Con_{\frac{1}{\mathbf r}}(X,\mathbf D)$ with a natural tensor product.
In \cite[\S 2.1.3]{bor:rep}, the first author described the tensor product on category $\Par_{\frac{1}{\mathbf r}}(X,\mathbf D)$ as given by the convolution formula:

\[ \left( \mathcal{E}_\cdot \otimes \mathcal{E}'_\cdot \right)_{\frac{ \mathbf{l}}{ \mathbf{r}} }= 
\int^{\frac{ \mathbf{m}}{ \mathbf{r}}\in \frac{1}{\mathbf r} \mathbb{Z}^I} \mathcal{E}_{\frac{ \mathbf{m}}{ \mathbf{r}} }  \otimes \mathcal{E}'_{\frac{ \mathbf{l-m}}{ \mathbf{r}} }  \] 
If we start from two \preparabolic connections $(\mathcal{E}_\cdot,\nabla_\cdot)$ and $(\mathcal{E}'_\cdot,\nabla'_\cdot)$, each term $\mathcal{E}_{\frac{ \mathbf{m}}{ \mathbf{r}} }  \otimes \mathcal{E}'_{\frac{ \mathbf{l-m}}{ \mathbf{r}} }$ is endowed with a tensor product logarithmic connection $\nabla_{\frac{ \mathbf{m}}{ \mathbf{r}} }  \otimes \nabla'_{\frac{ \mathbf{l-m}}{ \mathbf{r}} }$. Since these connections are compatible when $\frac{ \mathbf{m}}{ \mathbf{r}}$ varies in  $\frac{1}{\mathbf r}\mathbb{Z}^I$, Lemma \ref{lem:connection_on_colimit} shows that they give rise to a natural logarithmic connection on $\left( \mathcal{E}_\cdot \otimes \mathcal{E}'_\cdot \right)_{\frac{ \mathbf{l}}{ \mathbf{r}} }$ that we denote by $\left( \nabla_\cdot \otimes \nabla'_\cdot \right)_{\frac{ \mathbf{l}}{ \mathbf{r}} }$. The functoriality in \( \frac{ \mathbf{l}}{ \mathbf{r}}  \) is also clear, so we have lifted the tensor product from $\Par_{\frac{1}{\mathbf r}}(X,\mathbf D)$ to $\Par\Con_{\frac{1}{\mathbf r}}(X,\mathbf D)$. 

We now prove our main result, that is, the correspondence between \parabolic connections and holomorphic connections on the stack of roots. This can been seen as a de Rham version of the results for vector bundles in \cite{bor:corr,bor:rep}. However the same strategy of proof does not apply: namely holomorphic connections are not locally sum of connections of rank $1$. Our proof  is based mainly on Theorem \ref{thm:car_hol_stacky} and on the following larger equivalence of categories, which rather uses the aforementioned results.

\begin{proposition}
\label{prop:larger_equivalence}

The functors $(\mathcal E_\cdot,\nabla_\cdot)\mapsto  (\widehat{\mathcal E_\cdot},\widehat{\nabla_\cdot})$ and 
$(\mathcal F,\nabla)\mapsto (\widehat{\mathcal F}_\cdot,\widehat{\nabla}_\cdot)$ are inverse tensor equivalences of categories between $\Par\Con_{\frac{1}{\mathbf r}}(X,\mathbf D)$ and $\Con( \mathfrak{X},\mathfrak{D})$.

\end{proposition}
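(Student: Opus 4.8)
The plan is to view this proposition as the ``de Rham enrichment'' of Theorem \ref{thm:description}: on underlying objects the functors $\mathcal E_\cdot\mapsto\widehat{\mathcal E_\cdot}$ and $\mathcal F\mapsto\widehat{\mathcal F}_\cdot$ are already inverse equivalences $\Par_{\frac{1}{\mathbf r}}(X,\mathbf D)\simeq\Vect(\mathfrak X)$, and in fact tensor equivalences by \cite{bor:rep}, so the only thing left to install is the compatibility with connections. The guiding principle is that each building block of these constructions carries a canonical lift to logarithmic connections: the pull-back $\pi^*$ and the push-forward $\pi_*$ along the finite log-étale morphism $\pi$ (Remark \ref{rem:push-forward-logarithmic-connection}), the twists by the $\mathfrak D_i$ (Lemma \ref{lem:canonical_connection}), and the coends/colimits (Lemma \ref{lem:connection_on_colimit} and Remark \ref{rem:connection_on_colimit}).

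First I would record that both functors are well defined with values in the correct categories: $(\mathcal F,\nabla)\mapsto(\widehat{\mathcal F}_\cdot,\widehat{\nabla}_\cdot)$ lands in $\Par\Con_{\frac{1}{\mathbf r}}(X,\mathbf D)$ by Definition \ref{def:functor_F_enriched} together with Remark \ref{rem:functor_F_enriched}, and $(\mathcal E_\cdot,\nabla_\cdot)\mapsto(\widehat{\mathcal E_\cdot},\widehat{\nabla_\cdot})$ lands in $\Con(\mathfrak X,\mathfrak D)$ by Definition \ref{def:connection_on_coend}; functoriality is immediate from the uniqueness clauses in those statements. To prove that the two functors are mutually inverse, I would take the natural $\mathcal O$-module isomorphisms $\widehat{\widehat{\mathcal E_\cdot}}_\cdot\simeq\mathcal E_\cdot$ and $\widehat{\widehat{\mathcal F}_\cdot}\simeq\mathcal F$ supplied by Theorem \ref{thm:description} and check that they are horizontal. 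Here the decisive tool is uniqueness: by Definition \ref{def:connection_on_coend} the connection $\widehat{\nabla_\cdot}$ is the \emph{only} logarithmic connection on $\widehat{\mathcal E_\cdot}$ for which the structure maps $\pi^*\mathcal E_{\frac{\mathbf l}{\mathbf r}}\otimes\mathcal O_{\mathfrak X}(\mathbf l\mathfrak D)\to\widehat{\mathcal E_\cdot}$ of the coend are horizontal. Consequently, to prove an isomorphism respects the connections it suffices to verify horizontality after composing with these structure maps, where everything is expressed through $\pi^*$, the canonical twists, and the adjunction $(\pi^*,\pi_*)$.

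Unwinding the two composites this way, the verification reduces in both directions to a single fact, namely that the unit $\id\to\pi_*\pi^*$ and the counit $\pi^*\pi_*\to\id$ of the adjunction are horizontal for the connections produced by Remark \ref{rem:push-forward-logarithmic-connection}. This in turn rests on the exactness of $\pi_*$ on the tame stack $\mathfrak X$ and on the identification $\pi^*\Omega^1_{X/k}(\log D)\simeq\Omega^1_{\mathfrak X/k}(\log\mathfrak D)$ of Lemma \ref{lem:Kummer}, which guarantees that the push-forward connection is literally the descent of $\nabla$ and hence commutes with the adjunction morphisms. The twists are harmless because they are performed with the canonical connections $d(\mathbf l\mathfrak D)$, which are compatible with the structure maps by Remark \ref{rem:canonical connection}.

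Finally, for the tensor structure I would start from the convolution description of the product on $\Par\Con_{\frac{1}{\mathbf r}}(X,\mathbf D)$ recalled just before the statement and compare it with the ordinary tensor product on $\Con(\mathfrak X,\mathfrak D)$. Since $\pi^*$ is a tensor functor that commutes with tensor products of connections, and since the canonical connections are compatible with tensor products, the tensor isomorphism $\widehat{\mathcal E_\cdot\otimes\mathcal E'_\cdot}\simeq\widehat{\mathcal E_\cdot}\otimes\widehat{\mathcal E'_\cdot}$ known at the level of bundles is again horizontal by the same uniqueness argument applied to the coend defining the convolution, and the coherence constraints descend from the bundle case. I expect the main obstacle to be precisely the mutual-inverse step: the isomorphisms handed down by Theorem \ref{thm:description} only ``know'' about $\mathcal O$-modules, so one genuinely has to unwind the coend and the push--pull adjunction and check horizontality of the unit and counit by hand, rather than invoking the bundle equivalence as a black box.
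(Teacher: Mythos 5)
Your proposal is correct, but it takes a genuinely different route from the paper. The paper's proof never touches Koszul-level horizontality checks: via Corollary \ref{cor:sections-logarithmic-Atiyah-extension} and Corollary \ref{cor:sections-parabolic-Atiyah-extension} it replaces both categories of connections by the $\mathcal O$-linear categories $\Sec(\mathfrak X,\mathfrak D)$ and $\Par\Sec_{\frac{1}{\mathbf r}}(X,\mathbf D)$ of bundles equipped with a section of the (logarithmic, resp.\ parabolic) Atiyah exact sequence, and then reduces everything to a single statement, namely that the projection formula gives an isomorphism $P^1_{(X,D)/k}(\pi_*\mathcal F)\simeq\pi_*P^1_{(\mathfrak X,\mathfrak D)/k}(\mathcal F)$, which together with Lemma \ref{lem:twist_Atiyah_exact_sequence} shows that the bundle equivalence of Theorem \ref{thm:description} carries one Atiyah sequence to the other; the tensor compatibility is then quoted from Fubini's formula for coends. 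You instead keep the bundle-level isomorphisms of Theorem \ref{thm:description} and verify directly that they are horizontal, using the uniqueness clause of Lemma \ref{lem:connection_on_colimit} and Definition \ref{def:connection_on_coend} together with horizontality of the unit and counit of the $(\pi^*,\pi_*)$ adjunction, which does follow from Lemma \ref{lem:Kummer} and the construction of $\pi_*\nabla$ in Remark \ref{rem:push-forward-logarithmic-connection}. Both arguments work; the paper's buys uniformity (all horizontality is absorbed into one $\mathcal O$-linear isomorphism of principal parts, so Theorem \ref{thm:description} applies verbatim, and the tensor part is a formal coend manipulation), while yours avoids introducing $P^1$ on the stack at the cost of unwinding the coend and the push--pull adjunction by hand. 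Two points of hygiene in your write-up: first, the precise form of your ``uniqueness'' step is that the structure maps of a coend are jointly epimorphic and the failure of horizontality of a map is $\mathcal O$-linear, hence can be tested against them --- say this explicitly, since for the counit direction you test a map \emph{out of} the coend rather than invoking uniqueness of the coend connection; second, $\pi$ is not finite but is the (proper, quasi-finite) coarse moduli map, so the stacky push-forward of connections is the extension described in Remark \ref{rem:functor_F_enriched}, resting on exactness of $\pi_*$ by tameness, rather than Remark \ref{rem:push-forward-logarithmic-connection} itself, which is stated for finite log-étale morphisms of schemes.
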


\begin{proof}

	According to Corollary \ref{cor:sections-logarithmic-Atiyah-extension} (resp. Corollary \ref{cor:sections-parabolic-Atiyah-extension})	the category $\Con( \mathfrak{X},\mathfrak{D})$ (resp $\Par\Con_{\frac{1}{\mathbf r}}(X,\mathbf D)$) is equivalent to the category $\Sec( \mathfrak{X},\mathfrak{D})$ (resp. $\Par\Sec_{\frac{1}{\mathbf r}}(X, \mathbf D)$). It is thus sufficient to show that the functors $\mathcal E_\cdot \mapsto \widehat{\mathcal E_\cdot}$ and $\mathcal F \mapsto \widehat{\mathcal F}_\cdot$ (Definition \ref{def:functors_F_G}) induce inverse equivalences between $\Sec( \mathfrak{X},\mathfrak{D})$ and $\Par\Sec_{\frac{1}{\mathbf r}}(X, \mathbf D)$.

Let $\mathcal F$ be a vector bundle on $\mathfrak X$. The projection formula shows that the natural morphism 
$P^1_{(X,D)/k}(\pi_*\mathcal F)  \rightarrow \pi_*P^1_{(\mathfrak{X},\mathfrak{D})/k}(\mathcal F)$ is an isomorphism.
From this and Lemma \ref{lem:twist_Atiyah_exact_sequence} it follows that the functor $\mathcal F \mapsto \widehat{\mathcal F}_\cdot$ sends the logarithmic Atiyah exact sequence of $\mathcal F$ to the parabolic Atiyah exact sequence of $\widehat{\mathcal F}_\cdot$. Hence the result follows from Theorem \ref{thm:description}.

The fact that these equivalences preserve tensor products follows from Fubini's formula for coends, see \cite[\S 3.4.4]{bor:corr}.  
\end{proof}

\begin{theorem}[]
\label{thm:correspondence}
The functors $(\mathcal E_\cdot,\nabla_\cdot)\mapsto  (\widehat{\mathcal E_\cdot},\widehat{\nabla_\cdot})$ and 
$(\mathcal F,\nabla)\mapsto (\widehat{\mathcal F}_\cdot,\widehat{\nabla}_\cdot)$ are inverse tensor equivalences of categories between $\Par\Con_{\frac{1}{\mathbf r}}^{st}(X,\mathbf D)$ and $\Con( \mathfrak{X})$. 
\end{theorem}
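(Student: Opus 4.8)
The plan is to obtain this theorem as the restriction of the larger equivalence of Proposition \ref{prop:larger_equivalence} to suitable full subcategories, the matching up of those subcategories being exactly the content of Theorem \ref{thm:car_hol_stacky}. Write $F$ for the functor $(\mathcal E_\cdot,\nabla_\cdot)\mapsto (\widehat{\mathcal E_\cdot},\widehat{\nabla_\cdot})$ and $G$ for $(\mathcal F,\nabla)\mapsto (\widehat{\mathcal F}_\cdot,\widehat{\nabla}_\cdot)$. By Proposition \ref{prop:larger_equivalence}, $F$ and $G$ are inverse tensor equivalences between $\Par\Con_{\frac{1}{\mathbf r}}(X,\mathbf D)$ and $\Con(\mathfrak X,\mathfrak D)$, so it will suffice to check that these equivalences carry the full subcategory $\Par\Con_{\frac{1}{\mathbf r}}^{st}(X,\mathbf D)$ onto the full subcategory $\Con(\mathfrak X)$ and respect the tensor structures on them.

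First I would record that both inclusions at issue are inclusions of full subcategories. On the parabolic side, being \parabolic is a property of the residues (Definition \ref{def:parabolic_connection}) and not extra data, so $\Par\Con_{\frac{1}{\mathbf r}}^{st}(X,\mathbf D)$ is full in $\Par\Con_{\frac{1}{\mathbf r}}(X,\mathbf D)$. On the stacky side, a holomorphic connection is the same thing as a logarithmic connection all of whose residues $\res_{\mathfrak D_i}\nabla$ vanish: the inclusion $\Omega^1_{\mathfrak X/k}\hookrightarrow \Omega^1_{\mathfrak X/k}(\log \mathfrak D)$ coming from the residue exact sequence (the stacky analogue of Proposition \ref{prop:exact_sequence_residues}) exhibits $\Con(\mathfrak X)$ as the full subcategory of $\Con(\mathfrak X,\mathfrak D)$ cut out by the vanishing of all residues, the condition for a morphism to respect the connections being unchanged because this inclusion of sheaves is injective.

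The heart of the matter is then purely the object-level bijection. By Theorem \ref{thm:car_hol_stacky}, a logarithmic connection $(\mathcal F,\nabla)$ on $(\mathfrak X,\mathfrak D)$ is holomorphic if and only if $G(\mathcal F,\nabla)=(\widehat{\mathcal F}_\cdot,\widehat{\nabla}_\cdot)$ is \parabolic. Since $F$ and $G$ are mutually inverse, this says precisely that $G$ sends $\Con(\mathfrak X)$ into $\Par\Con_{\frac{1}{\mathbf r}}^{st}(X,\mathbf D)$ and $F$ sends $\Par\Con_{\frac{1}{\mathbf r}}^{st}(X,\mathbf D)$ into $\Con(\mathfrak X)$, and that an object lies in one subcategory exactly when its image lies in the other. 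As the two subcategories are full, the unit and counit isomorphisms of the equivalence $(F,G)$ restrict, so I would conclude that $F$ and $G$ restrict to inverse equivalences between $\Par\Con_{\frac{1}{\mathbf r}}^{st}(X,\mathbf D)$ and $\Con(\mathfrak X)$.

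It remains to see that the tensor structure restricts, and here I would note that the tensor product of two holomorphic connections is again holomorphic, since the Leibniz formula for $\nabla\otimes\nabla'$ lands in $\mathcal F\otimes\mathcal F'\otimes\Omega^1_{\mathfrak X/k}$; thus $\Con(\mathfrak X)$ is a full tensor subcategory of $\Con(\mathfrak X,\mathfrak D)$. Because $F$ is a tensor equivalence carrying $\Par\Con_{\frac{1}{\mathbf r}}^{st}(X,\mathbf D)$ onto this tensor subcategory, the convolution tensor product of two \parabolic connections is automatically again \parabolic, and $F$, $G$ restrict to the asserted inverse tensor equivalences. I do not expect a genuine obstacle at this stage: all the substantive work is already contained in Proposition \ref{prop:larger_equivalence} and Theorem \ref{thm:car_hol_stacky}, and the only points worth watching are the two fullness verifications and the pleasant observation that stability of $\Par\Con_{\frac{1}{\mathbf r}}^{st}(X,\mathbf D)$ under convolution comes for free from the correspondence rather than requiring a direct computation.
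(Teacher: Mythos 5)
Your proposal is correct and follows exactly the paper's route: the paper's proof of Theorem \ref{thm:correspondence} consists precisely in combining Proposition \ref{prop:larger_equivalence} with Theorem \ref{thm:car_hol_stacky}, restricting the larger tensor equivalence to the full subcategories matched up by the residue criterion. Your additional verifications (fullness on both sides, holomorphic connections as the vanishing-residue locus inside $\Con(\mathfrak X,\mathfrak D)$, and stability of the subcategories under the tensor structures) are exactly the routine details the paper leaves implicit, and they are all sound.
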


\begin{proof}
This follows from Proposition \ref{prop:larger_equivalence} and Theorem \ref{thm:car_hol_stacky}.
\end{proof}

\begin{remark}
\label{rem:correspondence}
Let $(\mathcal E_\cdot,\nabla_\cdot)$ be a \parabolic connection. The theorem shows that the natural connection on the vector bundle $\widehat{\mathcal E_\cdot}=\int^{\frac{1}{\mathbf r}\mathbb Z^I}\pi^*\mathcal E_\cdot \otimes \mathcal O_{\mathfrak X}( \cdot  \mathbf r \mathbf{\mathfrak D ) } $ is holomorphic. But, in most cases, the connection $\pi^* \nabla_{\frac{ \mathbf{l} }{ \mathbf{r}} }( \mathbf{l} \mathbf{\mathfrak D} )$ on an individual term $\pi^*\mathcal E_{\frac{ \mathbf{l} }{ \mathbf{r}} } \otimes \mathcal O_{\mathfrak X}( \mathbf{l} \mathbf{\mathfrak D )}$ is not holomorphic (this is already true for the simplest example of $({{\mathcal O}_X}_\cdot, d_\cdot)$). 
\end{remark}

\begin{corollary}
\label{cor:correspondence}
Let $(\mathcal F,\nabla)$ and $(\mathcal F',\nabla')$ be two holomorphic connections on $\mathfrak{X}$. Then any isomorphism  
$(\pi_*\mathcal F,\pi_*\nabla)\simeq (\pi_*\mathcal F',\pi_*\nabla')$ lifts uniquely to an isomorphism $(\mathcal F,\nabla)\simeq(\mathcal F',\nabla')$.
\end{corollary}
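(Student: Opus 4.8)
The plan is to deduce this from Theorem \ref{thm:correspondence} together with Corollary \ref{cor:parabolic-connection-semisimple-residues}; all the genuine content already resides in those two results, so the work here is purely to match up the objects correctly.

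First I would transport the data to the parabolic side. Since $(\mathcal F,\nabla)$ and $(\mathcal F',\nabla')$ are holomorphic, Theorem \ref{thm:correspondence} produces \parabolic connections $(\widehat{\mathcal F}_\cdot,\widehat{\nabla}_\cdot)$ and $(\widehat{\mathcal F'}_\cdot,\widehat{\nabla'}_\cdot)$, and asserts that $(\mathcal F,\nabla)\mapsto(\widehat{\mathcal F}_\cdot,\widehat{\nabla}_\cdot)$ is an equivalence of categories. In particular it is fully faithful, so isomorphisms $(\mathcal F,\nabla)\simeq(\mathcal F',\nabla')$ are in natural bijection with isomorphisms $(\widehat{\mathcal F}_\cdot,\widehat{\nabla}_\cdot)\simeq(\widehat{\mathcal F'}_\cdot,\widehat{\nabla'}_\cdot)$ of \parabolic connections.

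Second, I would observe that the degree-$\mathbf 0$ component of this \parabolic connection is exactly $(\pi_*\mathcal F,\pi_*\nabla)$: by Definition \ref{def:functors_F_G} one has $\widehat{\mathcal F}_{\mathbf 0}=\pi_*\mathcal F$, and by Definition \ref{def:functor_F_enriched} the associated connection is $\widehat{\nabla}_{\mathbf 0}=\pi_*(\nabla(0))=\pi_*\nabla$. Hence an isomorphism $(\pi_*\mathcal F,\pi_*\nabla)\simeq(\pi_*\mathcal F',\pi_*\nabla')$ is the same datum as an isomorphism between the degree-$\mathbf 0$ logarithmic connections underlying the two \parabolic connections, viewed in $\Con^{ss}_{\frac{\mathbf 1}{\mathbf r}}(X,\mathbf D)$ by virtue of Proposition \ref{prop:parabolic-connection-semisimple-residues}.

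Finally I would invoke Corollary \ref{cor:parabolic-connection-semisimple-residues}, which states precisely that any isomorphism of the degree-$\mathbf 0$ pieces lifts uniquely to an isomorphism of the \parabolic connections. Composing the three bijections — isomorphisms on $\mathfrak X$ against isomorphisms of \parabolic connections (Theorem \ref{thm:correspondence}), the latter against isomorphisms of degree-$\mathbf 0$ pieces (Corollary \ref{cor:parabolic-connection-semisimple-residues}), and the identification of those pieces with $(\pi_*\mathcal F,\pi_*\nabla)$ — yields the desired unique lift. I do not expect a genuine obstacle: the only point to verify is that each identification is compatible with the connections, which is immediate from the definitions, since the substantive analysis (semi-simplicity of the residue of $\nabla_0$ and the coincidence $F^w_\cdot=F^{\nabla_0}_\cdot$ of the weight and residue filtrations) has already been carried out in Proposition \ref{prop:parabolic-connection-semisimple-residues}.
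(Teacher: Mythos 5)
Your proposal is correct and follows exactly the route of the paper, whose own proof is the one-line deduction from Theorem \ref{thm:correspondence} and Corollary \ref{cor:parabolic-connection-semisimple-residues}; your write-up merely makes explicit the identification $(\widehat{\mathcal F}_{\mathbf 0},\widehat{\nabla}_{\mathbf 0})=(\pi_*\mathcal F,\pi_*\nabla)$ (via Definitions \ref{def:functors_F_G} and \ref{def:functor_F_enriched}) and the full-faithfulness step that the paper leaves implicit. There is no gap.
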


\begin{proof}
	This follows from Theorem \ref{thm:correspondence} and Corollary \ref{cor:parabolic-connection-semisimple-residues}.
\end{proof}

\begin{remark}
\label{rem:cor-correspondence}
The corresponding statement for logarithmic connections is false. Namely let us assume for simplicity that $\#I=1$. Let $\omega \in \Gamma(\mathfrak{X},\Omega^1_{\mathfrak{X}/k}(\log(\mathfrak{D}))$, and $0\leq l <r$. The morphism $\mathcal{O}_{\mathfrak{X}}\rightarrow \mathcal{O}_{\mathfrak{X}}(l \mathfrak{D})$ becomes an isomorphism after applying $\pi_*$  (\cite[Lemme 3.11]{bor:corr}) and is compatible with the connections $d+\omega$ (resp. $(d+\omega)(l \mathfrak{D})$) on $\mathcal{O}_{\mathfrak{X}}$ (resp. $\mathcal{O}_{\mathfrak{X}}(l \mathfrak{D})$), see Remark \ref{rem:canonical connection}. We deduce that $(\pi_*(\mathcal{O}_{\mathfrak{X}}(l \mathfrak{D})),\pi_*(d+\omega)(l \mathfrak{D}))=(\mathcal O_X,d+\omega)$ is independent of $l$, where we now see $\omega$ as an element of $\Gamma(X,\Omega^1_{X/k}(\log({D}))$.
\end{remark}

\subsection{The case of $\lambda$-connections}%
\label{sub:the_case_of_lambda_connections}

We fix $\lambda$ in the base field $k$. Let us discuss briefly what happens more generally for $\lambda$-connections, that is pairs $(\mathcal E,\nabla)$, where $\mathcal E$ is a vector bundle and $\nabla : \mathcal{E} \rightarrow  \mathcal{E}\otimes_{\mathcal{O}_X}\Omega^1(\log D)$ is $k$-linear morphism  verifying the $\lambda$-Leibniz rule $\nabla(fs)=f\nabla(s)+\lambda df\otimes s$. Such a connection can be twisted by a divisor $B$ with support in $D$ and $\res_{D_i} (\nabla(B))= \left(\res_{D_i} (\nabla)\right)(B)-\lambda\mu_i  \id$, where $\mu_i$ is the valuation of $B$ at $D_i$.

There is an obvious notion of a \preparabolic $\lambda$-connection generalizing Definition \ref{def:pre-parabolic_connection}, one just replaces connections by $\lambda$-connections. A  \preparabolic $\lambda$-connection $(\mathcal E_{\cdot},\nabla_{\cdot})$ on  $(X, \mathbf D)$ with weights in $\frac{1}{\mathbf r}\mathbb Z^I$ is \parabolic if for each $i\in I$ the morphism 
$\res_{|D_i}(\nabla_{\frac{\mathbf{l}}{\mathbf{r}}})$ induces \(\lambda \frac{l_i}{r_i} \id \) on  \(  \frac{\mathcal E_{ \frac{\mathbf{l}}{\mathbf{r}}} }{\mathcal E_{ \frac{\mathbf{l}+e_i}{\mathbf{r}}} }\).

It is clear that Theorem \ref{thm:car_hol_stacky} and Theorem \ref{thm:correspondence} hold for $\lambda$-connections, with the same proofs. The reason we have not written the article at this level of generality is that the only real new content is for $\lambda=0$, that is, parabolic Higgs bundles, a case already known from \cite{BMW:higgs}. Our choice of the term `strongly parabolic connection' is motivated by the case of parabolic Higgs bundles. 

Finally, if $(\mathcal E_{\cdot},\nabla_{\cdot})$ is a \parabolic $\lambda$-Higgs bundle, then it is clear from the definition that $\res_{|D_i}(\nabla_0)$ is nilpotent. Moreover, similarly to Proposition \ref{prop:parabolic-connection-semisimple-residues}, the Higgs field $\nabla_0$ should enable to reconstruct the parabolic structure. 

\section{Towards the log-Kummer algebraic fundamental group}%
\label{sec:towards_the_log-kummer_algebraic_fundamental_group}

In this part, we assume that $k$ is a field of characteristic $0$, and use the notations of the log-smooth context (\S \ref{ssub:logarithmic_context}).

\subsection{Curvature of a parabolic connection}%
\label{sub:curvature_of_a_pre-parabolic_connection}

The usual definition of curvature admits a straightforward transposition to the parabolic context:

\begin{definition}[]
\label{def:curvature_of_a_pre-parabolic_connection}

Let $(\mathcal{E}_\cdot, \nabla_\cdot)\in  \Par\Con_{\frac{1}{\mathbf r}}(X,\mathbf D)$ be a \preparabolic connection on $(X,\mathbf{D})$. Its curvature $C_{(\mathcal{E}_\cdot, \nabla_\cdot)}$ is defined as the composite morphism:
\[C_{(\mathcal{E}_\cdot, \nabla_\cdot)} : \mathcal{E}_\cdot \xrightarrow{\nabla} \Omega_{X/k}^1(\log(D)) \otimes_{ \mathcal{O} _X} \mathcal{E}_\cdot \xrightarrow{\id\otimes \nabla} \Omega_{X/k}^2(\log(D)) \otimes_{ \mathcal{O} _X} \mathcal{E}_\cdot \; . \] 
The \preparabolic connection is integrable if  \( C_{(\mathcal{E}_\cdot, \nabla_\cdot)}=0 \). \end{definition}

\begin{remark}
\label{rem:curvature_of_a_pre-parabolic_connection}

\begin{enumerate}
	\item As usual $C_{(\mathcal{E}_\cdot, \nabla_\cdot)}$ is in fact $\mathcal{O}_X$-linear.
	\item By definition $(\mathcal{E}_\cdot, \nabla_\cdot)$ is integrable if it is componentwise.
	\item If $(\mathcal{F},\nabla)$ is the logarithmic connection on the stack of roots  $\sqrt[\mathbf{r}]{{(\mathbf{D},\mathbf{s})/X}}$ associated to $(\mathcal{E}_\cdot, \nabla_\cdot)$ (see Proposition \ref{prop:larger_equivalence}) then $C_{(\mathcal{E}_\cdot, \nabla_\cdot)}$ corresponds to the curvature $C_{(\mathcal{F},\nabla)}$ via the correspondence of Theorem \ref{thm:description}. 
\end{enumerate}
\end{remark}

\subsection{Algebraic fundamental groups of Deligne-Mumford stacks}%
\label{sub:algebraic_fundamental_groups_of_deligne_mumford_stacks}

\begin{proposition}
\label{prop:int_connections_DM_stack_tannaka}
Let $\mathfrak{X}/k$ be a smooth Deligne-Mumford stack. The category $\Int\Con( \mathfrak{X})$ of integrable holomorphic connections on  $\mathfrak{X}/k$ is tannakian. 
\end{proposition}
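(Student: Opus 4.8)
The plan is to verify directly the defining properties of a tannakian category in the sense of Deligne--Milne: I must exhibit $\Int\Con(\mathfrak X)$ as a rigid abelian $k$-linear tensor category with $\Hom(\mathbf 1,\mathbf 1)=k$, where $\mathbf 1=(\mathcal O_{\mathfrak X},d)$ is the unit, and then produce an exact faithful $k$-linear tensor functor $\omega\colon \Int\Con(\mathfrak X)\to \Vect_K$ for some extension $K/k$. The tensor product, internal $\HOM$ and duals are inherited from the operations recalled in \S\ref{ssub:definition_on_schemes_and_internal_operations}, extended étale-locally to the stacky setting; one only checks that integrability is preserved under these operations, which is immediate, and rigidity then descends from rigidity of $\Vect(\mathfrak X)$. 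So the two genuine points are the \emph{abelian} structure and the \emph{existence of a fiber functor}, and I expect the former to be the main obstacle.

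For the abelian structure the issue is that for a morphism $\phi$ in $\Int\Con(\mathfrak X)$ the kernel and cokernel computed in $\QCoh(\mathfrak X)$ must again be integrable connections, that is, locally free sheaves carrying a flat connection. Here the hypothesis $\operatorname{char}k=0$ is essential. As everything is formulated on the small étale site, the statement is étale-local, so I may replace $\mathfrak X$ by a smooth $k$-scheme $X$; there an integrable connection is precisely an $\mathcal O_X$-coherent $\mathcal D_X$-module. The key input is the classical fact that in characteristic $0$ an $\mathcal O_X$-coherent $\mathcal D_X$-module on a smooth variety is automatically locally free, and that a $\mathcal D_X$-stable $\mathcal O_X$-coherent sub- or quotient-sheaf of such is again of this type. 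Concretely, $\ker\phi$ and $\im\phi$ are $\mathcal D$-stable coherent subsheaves, hence subbundles with induced flat connection, and $\coker\phi$ is the corresponding quotient bundle; equivalently, flat connections being formally trivial at each point in characteristic $0$, a horizontal morphism has locally constant rank. This is exactly the place where integrability, rather than a mere connection, is used, which is why the proposition is stated for $\Int\Con$.

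It remains to check $\Hom(\mathbf 1,\mathbf 1)=k$ and to construct $\omega$. Assuming $\mathfrak X$ connected --- and, to get the value $k$ on the nose, geometrically connected over $k$, which one may arrange by replacing $k$ by the field of constants --- a horizontal global function is locally constant, hence constant, so $\Hom(\mathbf 1,\mathbf 1)=\Gamma(\mathfrak X,\mathcal O_{\mathfrak X})^{d=0}=k$. For the fiber functor I would choose a geometric point $x\colon \Spec K\to \mathfrak X$, which exists since $\mathfrak X$ is of finite type over $k$, and set $\omega(\mathcal F,\nabla)=x^*\mathcal F$, a finite-dimensional $K$-vector space. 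This is visibly $k$-linear and compatible with tensor products, the connection carrying no data over a point.

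Finally, $\omega$ is exact because a short exact sequence in $\Int\Con(\mathfrak X)$ is, on underlying sheaves, a short exact sequence of vector bundles, hence locally split, so it remains exact after applying $x^*$. And $\omega$ is faithful thanks to the constant-rank property established above: on the connected stack $\mathfrak X$ a horizontal morphism $\phi$ satisfies $\rk\phi=\rk(x^*\phi)$, so $x^*\phi=0$ forces $\phi=0$. This yields the required fiber functor and shows that $\Int\Con(\mathfrak X)$ is tannakian.
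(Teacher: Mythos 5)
Your proposal is correct and amounts to the paper's own approach made explicit: the paper disposes of the proposition in one line by asserting that ``the usual proof for schemes applies to Deligne--Mumford stacks'' with a citation to Saavedra Rivano VI 1.2, and what you write out --- étale-local reduction to a smooth scheme, abelianness via the characteristic-zero fact that $\mathcal O$-coherent $\mathcal D$-modules are locally free so that horizontal morphisms have locally free kernels and cokernels, and the fiber functor $x^*$ at a geometric point with exactness and faithfulness from constancy of rank --- is precisely that usual proof carried over. Your caveat that $\Hom(\mathbf 1,\mathbf 1)=k$ requires $\mathfrak X$ (geometrically) connected, arranged by passing to the field of constants, is a genuine point of care that the paper's terse citation glosses over, since the statement as given for an arbitrary smooth Deligne--Mumford stack would otherwise fail for disconnected $\mathfrak X$.
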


\begin{proof}
	The usual proof for schemes applies to Deligne-Mumford stacks as well: see for instance \cite[VI 1.2]{saa:tan}.
\end{proof}

\begin{corollary}
\label{cor:algebraic_fundamental_groups_of_deligne_mumford_stacks}
The category $\Int\Par\Con_{\frac{1}{\mathbf r}}^{st}(X,\mathbf D)$ of integrable \parabolic connections with weights in $\frac{1}{\mathbf r}\mathbb Z^I$ is tannakian.
\end{corollary}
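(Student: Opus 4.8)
The plan is to transport the tannakian structure across the equivalence of Theorem~\ref{thm:correspondence}. That theorem provides a tensor equivalence between $\Par\Con_{\frac{1}{\mathbf r}}^{st}(X,\mathbf D)$ and the category $\Con(\mathfrak{X})$ of holomorphic connections on $\mathfrak X$. Since we are in the log-smooth context, the stack of roots $\mathfrak X=\sqrt[\mathbf r]{\mathbf D/X}$ is $k$-smooth by Proposition~\ref{prop:sncd_implies_smooth}, so Proposition~\ref{prop:int_connections_DM_stack_tannaka} applies and tells us that $\Int\Con(\mathfrak X)$ is tannakian. It therefore suffices to show that the equivalence of Theorem~\ref{thm:correspondence} restricts to a tensor equivalence between the integrable subcategories on the two sides.

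First I would check that the equivalence carries integrable objects to integrable objects, in both directions. This is exactly the content of Remark~\ref{rem:curvature_of_a_pre-parabolic_connection}(3): under the correspondence, the curvature $C_{(\mathcal E_\cdot,\nabla_\cdot)}$ of a \parabolic connection matches the curvature $C_{(\widehat{\mathcal E_\cdot}\,,\,\widehat{\nabla_\cdot})}$ of the associated holomorphic connection on $\mathfrak X$. Hence $(\mathcal E_\cdot,\nabla_\cdot)$ is integrable if and only if its image is, and the tensor equivalence of Theorem~\ref{thm:correspondence} restricts to a tensor equivalence
\[
\Int\Par\Con_{\frac{1}{\mathbf r}}^{st}(X,\mathbf D)\;\simeq\;\Int\Con(\mathfrak X)\,.
\]
One should also note that integrability is a closed condition compatible with tensor products, internal Hom and duals, so that $\Int\Con(\mathfrak X)$ (and likewise the parabolic side) is a full rigid abelian tensor subcategory; this guarantees that the abelian and rigid structure is inherited and that the restricted functor is still a tensor equivalence.

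Finally I would invoke the formal fact that being tannakian is preserved under $k$-linear tensor equivalences: a fiber functor on $\Int\Con(\mathfrak X)$ composes with the equivalence to give a fiber functor on $\Int\Par\Con_{\frac{1}{\mathbf r}}^{st}(X,\mathbf D)$, while exactness, faithfulness, $k$-linearity, rigidity and the unit constraint $\mathrm{End}(\mathbf 1)=k$ all transport across the equivalence. Combining this with Proposition~\ref{prop:int_connections_DM_stack_tannaka} yields the claim. The only genuine verification is the compatibility of integrability with the equivalence, and that is the step I expect to carry the real content; but since it is already recorded in Remark~\ref{rem:curvature_of_a_pre-parabolic_connection}, essentially no new work is needed, and the remainder is the standard transport-of-structure argument.
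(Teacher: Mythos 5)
Your proposal is correct and follows essentially the same route as the paper: both rest on the tensor equivalence of Theorem \ref{thm:correspondence}, the curvature compatibility of Remark \ref{rem:curvature_of_a_pre-parabolic_connection}, the smoothness of the stack of roots (Proposition \ref{prop:sncd_implies_smooth}), and the tannakian property of $\Int\Con(\mathfrak X)$ from Proposition \ref{prop:int_connections_DM_stack_tannaka}. You merely spell out the transport-of-structure step that the paper leaves implicit, which is a harmless (indeed helpful) elaboration rather than a different argument.
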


\begin{proof}
	According to Theorem \ref{thm:correspondence} and Remark \ref{rem:curvature_of_a_pre-parabolic_connection}, the category $\Int\Par\Con_{\frac{1}{\mathbf r}}(X,\mathbf D)$ is equivalent as a tensor category to the category $\Int\Con(\sqrt[\mathbf{r}]{{(\mathbf{D},\mathbf{s})/X}})$, hence the result follows from Proposition \ref{prop:sncd_implies_smooth} and Proposition \ref{prop:int_connections_DM_stack_tannaka}.
\end{proof}

\begin{proposition}
\label{prop:quotients_of_stacky_alg_fundamental_group}
Let $x\in X(k)\backslash D(k)$ and $\pi_{\mathbf D, \mathbf{r}}^{alg}(X,x)$ be the fundamental group of the Tannaka category $\Int\Par\Con_{\frac{1}{\mathbf r}}^{st}(X,\mathbf D)$ based at $x$. Fix an affine algebraic group $G/k$. Then there is a one to one correspondence between:
\begin{itemize}
	\item morphisms $\pi_{\mathbf D, \mathbf{r}}^{alg}(X,x) \rightarrow G$,
	\item triples $(T\rightarrow \sqrt[\mathbf{r}]{{(\mathbf{D},\mathbf{s})/X}}, t, \nabla)$ where $T\rightarrow \sqrt[\mathbf{r}]{{(\mathbf{D},\mathbf{s})/X}}$ is a $G$-torsor, $t\in T(k)$ is a lifting of $x$, and $\nabla$ is an integrable connection on $T\rightarrow \sqrt[\mathbf{r}]{{(\mathbf{D},\mathbf{s})/X}}$.
\end{itemize}

\end{proposition}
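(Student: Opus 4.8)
The plan is to transport the whole question to the stack of roots $\mathfrak X=\sqrt[\mathbf r]{(\mathbf D,\mathbf s)/X}$ and then apply the Tannakian formalism in its torsor-theoretic form. First I would reduce to $\mathfrak X$. Since $x\in X(k)\setminus D(k)$ and $\pi$ restricts to an isomorphism over $X\setminus D$, the point $x$ has a unique lift $\tilde x\in\mathfrak X(k)$. The tensor equivalence of Theorem \ref{thm:correspondence}, which by Corollary \ref{cor:algebraic_fundamental_groups_of_deligne_mumford_stacks} restricts to a tensor equivalence $\Int\Par\Con^{st}_{\frac{1}{\mathbf r}}(X,\mathbf D)\simeq\Int\Con(\mathfrak X)$, carries the fibre functor $\omega_x\colon(\mathcal E_\cdot,\nabla_\cdot)\mapsto(\mathcal E_0)_x$ to the fibre functor $\omega_{\tilde x}\colon(\mathcal F,\nabla)\mapsto\mathcal F_{\tilde x}$ (both yield the same vector space, as the relevant bundles coincide away from the divisor). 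Hence $\pi^{alg}_{\mathbf D,\mathbf r}(X,x)\simeq\pi:=\underline{\mathrm{Aut}}^{\otimes}(\omega_{\tilde x})$, and it suffices to classify homomorphisms $\pi\to G$.

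Next I would invoke Tannakian duality. By Proposition \ref{prop:int_connections_DM_stack_tannaka} the category $\Int\Con(\mathfrak X)$ is neutral Tannakian, so $\Int\Con(\mathfrak X)\simeq\Rep(\pi)$ with $\omega_{\tilde x}$ the forgetful functor. The standard formalism then provides a bijection between group homomorphisms $\pi\to G$ and isomorphism classes of pairs $(\rho,\lambda)$, where $\rho\colon\Rep(G)\to\Int\Con(\mathfrak X)$ is an exact $k$-linear tensor functor and $\lambda\colon\omega_{\tilde x}\circ\rho\xrightarrow{\sim}\mathrm{forget}_G$ is a tensor isomorphism with the forgetful fibre functor of $\Rep(G)$.

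It then remains to match the pairs $(\rho,\lambda)$ with the triples $(T\to\mathfrak X,t,\nabla)$. In one direction, a $G$-torsor with integrable connection $(T,\nabla)$ yields, via the associated-bundle construction $V\mapsto(T\times^{G}V,\ \text{induced connection})$, an exact tensor functor $\rho$ into $\Int\Con(\mathfrak X)$, while a point $t\in T(k)$ over $\tilde x$ trivialises $T_{\tilde x}\simeq G$ and hence supplies $\lambda$. Conversely, from $(\rho,\lambda)$ I would reconstruct $T$ as the fppf sheaf on $\mathfrak X$ sending $u\colon S\to\mathfrak X$ to the set of tensor isomorphisms $u^{\ast}\circ\rho\xrightarrow{\sim}\mathrm{forget}_G\otimes_k\mathcal O_S$; this is a $G$-torsor, it carries an integrable connection inherited from the connections on the objects $\rho(V)$, and $\lambda$ singles out the point $t$ over $\tilde x$.

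The main obstacle is precisely this reconstruction: verifying that the Tannakian output of $(\rho,\lambda)$ is a genuine $G$-torsor equipped with a well-defined integrable connection, and that the two constructions are mutually inverse. Over a scheme this is classical (Nori, Deligne), and the only additional point here is the stacky base. This causes no essential difficulty, because $\mathfrak X$ is a tame smooth Deligne–Mumford stack presented as a quotient $[T_{\mathfrak D}/\mathbb G_m^I]$ with $T_{\mathfrak D}$ a scheme (\S\ref{ssub:canonical_flat_presentation}); one may therefore check the torsor axioms and the descent of the connection after pulling back along the flat presentation $T_{\mathfrak D}\to\mathfrak X$, reducing everything to the scheme-theoretic case.
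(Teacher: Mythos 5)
Your proposal is correct and follows essentially the same route as the paper: the paper's proof likewise combines the classical Tannakian correspondence between $G$-torsors with integrable connection and tensor functors $\Rep_k(G)\rightarrow \Int\Con(\sqrt[\mathbf{r}]{(\mathbf{D},\mathbf{s})/X})$ with Tannaka duality, Theorem \ref{thm:correspondence} and Remark \ref{rem:curvature_of_a_pre-parabolic_connection}. You merely spell out the details the paper leaves implicit (the unique lift $\tilde x$ of the base point, the matching of fibre functors, the pair $(\rho,\lambda)$ formalism, and the reduction to schemes via the presentation $[T_{\mathfrak D}/\mathbb G_m^I]$), all of which are sound.
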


\begin{proof}
	It is classical that $G$-torsors over $\sqrt[\mathbf{r}]{{(\mathbf{D},\mathbf{s})/X}}$ endowed with an integrable connection correspond to tensor functors $\Rep_k(G) \rightarrow \Int\Con(\sqrt[\mathbf{r}]{{(\mathbf{D},\mathbf{s})/X}})$, so the result follows from Tannaka duality, Theorem \ref{thm:correspondence} and Remark \ref{rem:curvature_of_a_pre-parabolic_connection}.
\end{proof}

\subsection{A candidate for the log-Kummer algebraic fundamental group}%
\label{sub:a_candidate_for_the_log_kummer_algebraic_fundamental_group}

Let $(X,(D_i)_{i\in I})$ be a log-scheme, in the restricted set-up described \S \ref{ssub:logarithmic_differentials}.
Let $x\in X(k)\backslash D(k)$. We can define:

\[ \pi_{\mathbf D}^{alg}(X,x) =\varprojlim_{\mathbf r} \pi_{\mathbf D, \mathbf{r}}^{alg}(X,x) \; . \] 
This is the Tannaka group of the category

\[\Int\Par\Con^{st}(X,\mathbf D) =\varinjlim_{\mathbf r} \Int\Par\Con_{\frac{1}{\mathbf r}}^{st}(X,\mathbf D) \; .\] 
Assume now that $G$ is an algebraic group of finite type over $k$. Then Proposition \ref{prop:quotients_of_stacky_alg_fundamental_group} suggests that there is a one to one correspondence between:
\begin{itemize}
	\item morphisms $\pi_{\mathbf D}^{alg}(X,x) \rightarrow G$,
	\item triples $(T\rightarrow \sqrt[\mathbf{\infty}]{{(\mathbf{D},\mathbf{s})/X}}, t, \nabla)$ where $T\rightarrow \sqrt[\mathbf{\infty}]{{(\mathbf{D},\mathbf{s})/X}}$ is a $G$-torsor, $t\in T(k)$ is a lifting of $x$, and $\nabla$ is an integrable connection on $T\rightarrow \sqrt[\mathbf{\infty}]{{(\mathbf{D},\mathbf{s})/X}}$.
\end{itemize}
Here \[ \sqrt[\mathbf{\infty}]{{(\mathbf{D},\mathbf{s})/X}}=\varprojlim_{\mathbf r} \sqrt[\mathbf{r}]{{(\mathbf{D},\mathbf{s})/X}} \]
is a pro-algebraic stack called the infinite root stack in \cite[]{TV:infinite}. This works hints on the other hand that $G$-torsors on  $\sqrt[\mathbf{\infty}]{{(\mathbf{D},\mathbf{s})/X}}$ for the étale topology correspond to $G$-torsors on the log scheme $(X,(D_i)_{i\in I})$ for the Kummer-étale topology. So we think that the group $\pi_{\mathbf D}^{alg}(X,x)$ might deserve the name of log-Kummer algebraic fundamental group of $(X,(D_i)_{i\in I})$.
\printbibliography
\end{document}